\newcommand{\keywords}[1]{\par\addvspace\baselineskip
\noindent\keywordname\enspace\ignorespaces#1}
\newcommand{\Ww}{\mathcal{W}}
\newcommand{\Ss}{\mathcal{S}}
\def\K{\ensuremath{\mathbb{Q}}}
\def\Q{\ensuremath{\mathbb{Q}}}
\def\R{\ensuremath{\mathbb{R}}}
\def\C{\ensuremath{\mathbb{C}}}
\def\Kbar {\ensuremath{{\mathbb{C}}}}
\def\S{\ensuremath{\mathfrak{S}}}
\def\scrR{\ensuremath{\mathscr{R}}}
\DeclareMathOperator{\sign}{sign}
\DeclareMathOperator{\thom}{Thom}
\DeclareMathOperator{\der}{Der}
\def\softO{\ensuremath{{O}{\,\tilde{ }\,}}}
\newcommand{\F}{\mathbb{F}}
\newcommand{\ZZ}{\mathrm{Zer}}
\newcommand{\RR}{\mathrm{Reali}}
\newcommand{\D}{\mathrm{D}}
\newcommand{\Z}{\mathbb{Z}}
\newcommand{\HH}{\mathrm{H}}
\newcommand{\card}{\mathrm{card}}
\newcommand {\hide}[1]{}
\newcommand{\X}{\mathbf{X}}
\newcommand{\x}{\mathbf{x}}
\newcommand{\y}{\mathbf{y}}
\newcommand{\length}{\mathrm{length}}
\newcommand{\gen}{\mathrm{gen}}
\newcommand{\W}{\mathcal{W}}
\DeclareBoldMathCommand{\f}{f}
\DeclareBoldMathCommand{\x}{x}
\DeclareBoldMathCommand{\y}{y}
\DeclareBoldMathCommand{\z}{z}
\DeclareBoldMathCommand{\b}{b}
\DeclareBoldMathCommand{\X}{X}
\DeclareBoldMathCommand{\T}{T}
\DeclareBoldMathCommand{\a}{a}
\DeclareBoldMathCommand{\u}{u}
\DeclareBoldMathCommand{\v}{v}
\def\calO{\ensuremath{\mathcal{O}}}
\DeclareMathOperator{\Comp}{Comp}
\DeclareMathOperator{\CompMax}{CompMax}
\def\N{\ensuremath{\mathbb{N}}}
\DeclareBoldMathCommand{\calG}{\mathfrak{G}}
\DeclareBoldMathCommand{\calS}{\mathcal{S}}
\DeclareBoldMathCommand{\calR}{\mathcal{R}}
\def\jac{\ensuremath{{\rm Jac}}}
\def\fraka{{\mathfrak{a}}}
\def\macrodelta{\ell}
\DeclareBoldMathCommand{\g}{g}
\def\calN{{\phi}}
\def\vl{{L}}
\def\calU{{\mathcal{U}}}
\def\calA{{\mathcal{A}}}
\DeclareMathOperator{\RM}{RM}
\newcommand{\comp}{\mathscr{C}}
\newcommand{\Coxeter}{\mathcal{C}}
\def\true{\ensuremath{{\rm \bf true}}}
\def\false{\ensuremath{{\rm \bf false}}}
\newtcolorbox{codebox}{
  colback=gray!10,    
  colframe=black!80,  
  listing only,
  listing options={
    basicstyle=\ttfamily\footnotesize,
    breaklines=true,
    language=Maple
  },
  width=1\linewidth, 
  left=5pt,
  right=5pt,
  top=5pt,
  bottom=5pt,
  boxrule=0.5pt,
  arc=3pt
}
\newcounter{subproblem}[problem]
\renewcommand\theproblem{Problem \arabic{problem}}
\renewcommand\thesubproblem{Problem \arabic{problem}.\arabic{subproblem}}
\newenvironment{pb}[1][]{
  \refstepcounter{problem}%
  \begin{mdframed}[
    backgroundcolor=blue!10,
    linecolor=gray!80,
    linewidth=1pt,
    frametitlefont=\bfseries,
    frametitle={\theproblem. #1}
  ]
}{
  \end{mdframed}
}
\begin{document}

\mainmatter  

\title{Lecture Notes in Computer Science:\\Authors' Instructions
for the Preparation\\of Camera-Ready
Contributions\\to LNCS/LNAI/LNBI Proceedings}

\titlerunning{Lecture Notes in RTCA: Symmetry}

%
%
\author{Cordian Riener%
\thanks{Please note that the LNCS Editorial assumes that all authors have used
the western naming convention, with given names preceding surnames. This determines
the structure of the names in the running heads and the author index.}%
\and Thi Xuan Vu}
%

\institute{Springer-Verlag, Computer Science Editorial,\\
Tiergartenstr. 17, 69121 Heidelberg, Germany\\
\mailsa\\
\mailsb\\
\mailsc\\
\url{http://www.springer.com/lncs}}

%
%


\frontmatter  

\renewcommand*\contentsname{\hfill\makebox[0pt][r]{\normalfont\Large\bfseries Contents}}
\tableofcontents

\mainmatter 
\section*{Symbolic Computation with Symmetric Polynomials in Real Algebraic Geometry }
\addcontentsline{toc}{section}{{\bf Symbolic Computation with Symmetric Polynomials in Real Algebraic Geometry} \\ {Cordian Riener and Thi Xuan Vu}}
\stepcounter{section}

\vspace{1cm}
\begin{flushleft}  
{Cordian Riener}\footnote{Department of Mathematics and Statistics, UiT - the Arctic University of Norway, 9037 Troms\o, Norway, email: cordian.riener@uit.no} \\[1ex] 
{Thi Xuan Vu}\footnote{Univ. Lille, CNRS, Centrale Lille, UMR 9189 CRIStAL, Lille, France, email: thi-xuan.vu@univ-lille.fr} \\[1ex] 
\end{flushleft}

\begin{abstract}
Symmetry plays a central role in accelerating symbolic computation involving polynomials. This chapter surveys recent developments and foundational methods that leverage the inherent symmetries of polynomial systems to reduce complexity, improve algorithmic efficiency, and reveal deeper structural insights. The main focus is on symmetry by the permutation of variables. 
\keywords{invariant theory, symmetric polynomials, computational complexity, efficient algorithms.}
\end{abstract}

\subsection{Introduction}
\begin{quote}
{\it Symmetry is a vast subject, significant in art and nature. Mathematics lies at its root,
and it would be hard to find a better one on which to demonstrate the working of the
mathematical intellect. }\vspace{-0.5cm}
\begin{flushright}Hermann Weyl\end{flushright}
\end{quote}
Real algebraic geometry asks for real solutions of algebraic (in-)equalities. Since many computational problems arising in different contexts, including engineering, finance, and computer science, can be formulated in the language of polynomial equations and inequalities the question of realness of solutions is very natural. Generally, symbolic methods tend to be challenging and  solving real algebraic  problems is thus also  known to be algorithmically hard in general, whenever the dimension is growing. It is therefore  beneficial to explore the algebraic and geometric structures underlying a given problem to design more efficient algorithms, and a kind of structure which is omnipresent in algebra and geometry is symmetry. In the language of algebra, symmetry is the invariance of an object or a property by some action of a group. The goal of the present chapter is to present  techniques which allow us to reduce the complexity of a real algebraic computation problem with symmetry. Since it is impossible to give an exhaustive and detailed description of this vast domain, our goal here is to focus mainly on recent developments in a very special situation, namely the situation when the real algebraic problem in question is invariant by any permutation of the variables. In this particular case strong algebraic properties of so called {\em symmetric polynomials} allow for a quite strong reduction of complexity with respect to the dimension of the problem.

\subsubsection{Overview.}%
This chapter begins with a concise overview of its structure and contents. The remainder of the introduction provides essential preliminaries. In Section~\ref{sec:basic}, we recall fundamental notions from real algebraic geometry. Section~\ref{sec:comp} introduces the principal computational problems that arise in this setting. In Sections~\ref{sec:critical}, \ref{sec:road}, and~\ref{sec:sos}, we present key algorithmic tools: the \emph{critical point method}, the construction of \emph{roadmaps}, and \emph{sums of squares} decompositions, respectively.

The subsequent Section~\ref{seq:comm} discusses computational models and outlines the basic algebraic tasks and assumptions regarding the computational framework. Given that the primary focus of this chapter lies in the exploitation of symmetry, particularly with respect to the action of the symmetric group, we collect relevant background on symmetric polynomials in Section~\ref{sec:sym}.

Section~\ref{sec_prin} demonstrates how, in the case of fixed degree, certain structural properties of symmetric polynomials lead to substantial reductions in complexity. We first show that sums of squares decompositions admit \emph{constant complexity} (with respect to the number of variables) in this regime (Section~\ref{sec:symsos}). This is followed by a presentation of the \emph{degree principle} in Section~\ref{sec:degree}, which forms the basis for the development of more efficient algorithms described in Section~\ref{sec:degreealgo}.

In Section~\ref{sub:empty}, we discuss recent progress that enables efficient to deciding the emptiness of real algebraic sets even when the degree is not fixed. Finally, Section~\ref{sec:topology} addresses the topological aspects of symmetric semi-algebraic sets and illustrates how symmetry can be leveraged to design more efficient algorithms for computing topological invariants.

\subsubsection{Basic Notions of Real Algebraic Geometry.}\label{sec:basic} We first recall some basic notations of real algebraic geometry. 
\begin{definition}[Algebraic Sets]  
A subset \( V \subset \C^n \) is called a \(\K\)-algebraic set (or \(\K\)-algebraic variety) if there exists a set of polynomials \(\f = (f_1, \dots, f_s) \subset \K[x_1, \dots, x_n]\) such that \( V \) is the zero set in \(\C^n\) of the system \(\f = 0\); that is,
\[
V = V(\f) = \{ \a \in \C^n : \f(\a) = 0 \}.
\]
\end{definition}

If the algebraic set \( V \) is the zero locus of a single polynomial, it is called a \emph{hypersurface}. When this polynomial is linear, \( V \) is called a \emph{hyperplane}. Algebraic sets satisfy the following properties (see, e.g., \cite[Chapter I]{Hartshorne2013}): the empty set and the whole space \(\C^n\) are algebraic sets; the intersection of any collection of algebraic sets is an algebraic set; and the union of any finite collection of algebraic sets is an algebraic set. These properties imply that algebraic varieties behave as the closed sets of a topology on \(\C^n\).

\begin{definition}
An algebraic set is called a \emph{Zariski closed set}. The complement of a Zariski closed set is called a \emph{Zariski open set}. The \emph{Zariski topology} on \(\Kbar^n\) is the topology whose closed sets are the algebraic varieties. The \emph{Zariski closure} of a set \(V \subset \Kbar^n\) is the smallest algebraic set containing \(V\). A subset \(W \subset V\) of a variety \(V\) is \emph{Zariski dense} in \(V\) if its closure is \(V\).
\end{definition}

\begin{definition}[Real Algebraic Sets]  
A subset \( W \subset \R^n \) is called a \emph{real algebraic set} if there exists a set of polynomials \(\f = (f_1, \dots, f_s) \subset \R[x_1, \dots, x_n]\) such that
\[
W = \{ \a \in \R^n : \f(\a) = 0 \}.
\]
\end{definition}

\begin{definition}[Semi-algebraic Sets]  
A subset \( S \subset \R^n \) is called a \emph{semi-algebraic set} if it can be constructed from finitely many real algebraic sets using a finite number of unions, intersections, and complements. Equivalently, \( S \) can be described by a finite Boolean combination of polynomial equalities and inequalities:
\[
S = \bigcup_{i=1}^m \bigcap_{j=1}^{r_i} \{ \a \in \R^n : P_{ij}(\a) \ast_{ij} 0 \},
\]
where each \( P_{ij} \in \R[x_1, \dots, x_n] \) and each \(\ast_{ij} \in \{=, >, <, \geq, \leq\}\).
\end{definition}
Semi-algebraic sets satisfy the following properties (see e.g., {\cite[Chapters 2 and 5]{BPR06}}):  the image of a semi-algebraic set under a projection map is also semi-algebraic (Tarski–Seidenberg Theorem) and every semi-algebraic set has finitely many connected components, each of which is semi-algebraic.

Note that although algebraic sets and semi-algebraic sets both arise from polynomial equations, they differ significantly in some structural properties. First, while semi-algebraic sets are closed under finite unions, intersections, and complements (i.e., they form a Boolean algebra), algebraic sets are closed under finite intersections but not necessarily under unions or complements. Second, algebraic sets are not closed under projections; the image of an algebraic set under a polynomial map need not be algebraic. Moreover, real algebraic sets can have infinitely many connected components (with respect to the Euclidean topology), whereas semi-algebraic sets always have finitely many connected components, each of which is itself semi-algebraic. Finally, semi-algebraic geometry admits full quantifier elimination over real closed fields, which is a key feature in the study of these sets.

\begin{example}
Let 
\(
A = \{ x \in \mathbb{R} \mid x > 0 \}\)  and \( B = \{ x \in \mathbb{R} \mid x < 1 \}
\) be two semi-algebraic sets. 
Then, 
\[
A \cap B = (0,1), \quad A \cup B = \mathbb{R} \setminus [0,1], \quad \text{and} \quad \mathbb{R} \setminus A = (-\infty,0]
\]
are all semi-algebraic sets. On the other hand, consider the algebraic set 
\[
W = \{ x \in \mathbb{C} \mid x = 0 \} = V(x).
\]
Its complement is 
\[
\mathbb{C} \setminus W = \{ x \in \mathbb{C} \mid x \neq 0 \},
\]
which is not algebraic since no polynomial vanishes exactly on this set.

\medskip
\begin{center}
    
\begin{tikzpicture}[scale=2.5]

    \draw[->] (-1.5,0) -- (1.5,0) node[right] {$x$};
    \draw[->] (0,-1.2) -- (0,1.2) node[above] {$y$};
    
    \draw[thick] (0,0) circle (1);
    
    \fill[blue!20,opacity=0.5] (0,0) circle (1);
    
    \node at (0.5,0.6) {$S$};
    
    \draw[thick, ->] (-1.5,-0.8) -- (1.5,-0.8) node[right] {$\pi_x(S)$};
    
    \draw[thick, red] (-1,-0.8) -- (1,-0.8);
    
    \draw[thick, red, fill=white] (-1,-0.8) circle (0.03);
    \draw[thick, red, fill=white] (1,-0.8) circle (0.03);
    
    \node[below] at (-1,-0.8) {$-1$};
    \node[below] at (1,-0.8) {$1$};

    \node[right] at (1,0.2) {$x^2 + y^2 < 1$};
    
    \node[right] at (0.5,-1) {\textcolor{red}{$x^2 < 1$}};

\end{tikzpicture}

\end{center}

Now consider the semi-algebraic set 
\(
S = \{ (x,y) \in \mathbb{R}^2 \mid x^2 + y^2 < 1 \}.
\)
Projecting onto the \(x\)-axis yields 
\[
\pi_x(S) = \{ x \in \mathbb{R} \mid x^2 < 1 \} = (-1,1),
\]
which is semi-algebraic. Meanwhile, consider the algebraic set in \(\mathbb{C}^2\):
\[
V = \{ (x,y) \in \mathbb{C}^2 \mid xy = 1 \}.
\]
Projecting onto the \(x\)-coordinate yields
\[
\pi_x(V) = \{ x \in \mathbb{C} \mid \exists y \in \mathbb{C}, \, xy = 1 \} = \{ x \in \mathbb{C} \mid x \neq 0 \}.
\]
The set \(\pi_x(V)\) is not algebraic since it is the complement of the algebraic set \(V(x) = \{0\}\).

\smallskip

Finally, the semi-algebraic set
\[
\{ x \in \mathbb{R} \mid x < -1 \text{ or } 0 < x < 1 \text{ or } x > 2 \}
\]
has three connected components, each of which is semi-algebraic.

\begin{center}
\begin{tikzpicture}[scale=1]
    \draw[thick, ->] (-3,0) -- (3,0) node[right] {\(\mathbb{R}\)};
    
    \foreach \x in {-2, -1, 0, 1, 2}
        \draw (\x,0.05) -- (\x,-0.05) node[below] {\(\x\)};
        
    \draw[thick, red] (-3,0) -- (-1,0);
    \filldraw[red] (-1,0) circle (0.7pt); 
    \draw[white, fill=white] (-1,0) circle (0.5pt); 
    
    \draw[thick, blue] (0,0) -- (1,0);
    \filldraw[blue] (0,0) circle (0.7pt); 
    \draw[white, fill=white] (0,0) circle (0.5pt);
    \filldraw[blue] (1,0) circle (0.7pt); 
    \draw[white, fill=white] (1,0) circle (0.5pt);
    
    \draw[thick, green] (2,0) -- (3,0);
    \filldraw[green] (2,0) circle (0.7pt); 
    \draw[white, fill=white] (2,0) circle (0.5pt);
\end{tikzpicture}
\end{center}
\end{example}

\subsubsection{Computational Tasks in Real Algebraic Geometry.}\label{sec:comp}

Real algebraic geometry studies \emph{semi-algebraic sets} -- subsets of \(\mathbb{R}^n\) defined by finite Boolean combinations of polynomial equalities and inequalities with real coefficients. Several fundamental algorithmic problems naturally arise.

\begin{pb}[Computational problems]
Given a semi-algebraic set \( S \subset \mathbb{R}^n \), we aim to algorithmically answer the following tasks:
\begin{enumerate}
  \item \textbf{Emptiness Testing:} Check if  \( S \) is not empty and provide any real point in $S$.
  \item \textbf{Connected Components:} Count the  semi-algebraically connected components of  \( S \) and compute (at least)  one point in each component?
    \item \textbf{Topological Invariants:} Compute the  Betti numbers \( b_i(S) \) of $S$  or the Euler–Poincaré characteristic.
  \item \textbf{Connectivity Queries:} Given two points \( \u, \v \in S \), decide if they in the same connected component of $S$.

\end{enumerate}
\end{pb}

These problems are decidable by Tarski’s theorem; however,  the resulting methods such as the Tarski–Seidenberg quantifier elimination procedure suffer from non-elementary complexity. Over the past decades, significant progress has been made, particularly through the development of singly exponential algorithms when the dimension is fixed. Comprehensive treatments of classical results and algorithms for these tasks can be found in the textbook by Basu, Pollack, and Roy \cite{BPR06}, which we recommend to interested readers. Additionally, the survey article by Basu \cite{survey} provides an accessible overview of these topics.

Two fundamental algorithmic tools in real algebraic geometry are the \emph{critical point method} and the construction of \emph{roadmaps}. These techniques enable efficient solutions to the problems above, especially when the ambient dimension is fixed.

\subsubsection{The Critical Point Method.}\label{sec:critical}

The \emph{critical point method} is a central technique in algorithmic real algebraic geometry. It reduces global geometric questions to the study of local extrema of polynomial maps restricted to algebraic or semi-algebraic sets. Typically, one considers projection maps, such as the projection onto the first coordinate:
\[
\pi: \mathbb{R}^n \to \mathbb{R}, \quad (x_1, \dots, x_n) \mapsto x_1,
\]
and studies the behavior of \( \pi \) restricted to the given set.

\paragraph{Real root finding.}
For a real algebraic variety \( V \subset \mathbb{R}^n \), the critical point method allows the computation of at least one real point in each semi-algebraically connected component. This is achieved by finding the critical points of the projection \( \pi \) restricted to \( V \). Under genericity assumptions, each component contains a local extremum of \( \pi \), and these extrema correspond to critical points. The method is thus the basis for efficient algorithms for real root isolation in fixed dimension.

\paragraph{Connectivity and sampling.}
Using the same principles, one can test whether a given semi-algebraic set is connected and compute a sample point in each connected component. This is particularly useful in applications such as motion planning and control theory, where one needs to explore the connected regions of feasible configurations.

\paragraph{Computing topological invariants.}
The method also plays a role in computing topological invariants of semi-algebraic sets, such as the number of connected components and the Euler–Poincaré characteristic. In some cases, higher Betti numbers can be computed using stratifications based on critical values.

\paragraph{Quantifier elimination and decision procedures.}
The critical point method contributes to algorithms for deciding the truth of first-order formulas over the real numbers. In these algorithms, quantifiers are eliminated recursively by analyzing the real algebraic structure of the solution set and computing suitable projections.

\medskip

The critical points of \( \pi|_S \), where \( S \subset \mathbb{R}^n \) is a semi-algebraic set, can be computed using Lagrange multipliers or more generally using tools from differential and computational algebra. In the bounded and smooth case, the set of critical points is finite, and computing them allows us to recover global geometric information from local conditions.

However, when \( S \) is defined by symmetric polynomials, the use of coordinate projections such as \( x_1 \) breaks the symmetry. In such cases, modifications of the critical point method—such as using invariant projections—are required. We will return to this in later sections.

\subsubsection{Roadmaps and Connectivity.}\label{sec:road}

A complementary approach to understanding the connectivity of semi-algebraic sets is the construction of \emph{roadmaps}. Roadmaps were introduced by Canny~\cite{Canny} as a tool for describing the topological structure of a semi-algebraic set via a one-dimensional subset.

\begin{definition}
Let \( S \subset \mathbb{R}^n \) be a semi-algebraic set, and let \( \pi: \mathbb{R}^n \to \mathbb{R} \) denote the projection onto the first coordinate. For \( a \in \mathbb{R} \), define the fiber
\[
S_a = \{ \b \in \mathbb{R}^{n-1} : (a, \b) \in S \}.
\]
A \emph{roadmap} of \( S \) is a semi-algebraic subset \( \RM(S) \subset S \) of dimension at most one that satisfies:
\begin{itemize}
  \item For every semi-algebraically connected component \( C \subset S \), the intersection \( C \cap \RM(S) \) is non-empty and semi-algebraically connected.
  \item For every \( a \in \mathbb{R} \), and for every semi-algebraically connected component \( C' \subset S_a \), the intersection \( C' \cap \RM(S) \) is non-empty.
\end{itemize}
\end{definition}
Thus, \( \RM(S) \) forms a one-dimensional ``skeleton'' that intersects every component of \( S \) and allows one to navigate within and between components.

If \( \mathcal{M} \subset S \) is a finite set of points, a \emph{roadmap for the pair} \( (S, \mathcal{M}) \) is a roadmap \( \RM(S, \mathcal{M}) \subset S \) that contains all points in \( \mathcal{M} \). This is particularly useful for testing whether two designated points \( \u, \v \in S \) lie in the same connected component: it suffices to check whether they are connected within \( \RM(S, \mathcal{M}) \).

\paragraph{Construction.}
The construction of roadmaps is recursive and builds upon the critical point method. One proceeds by projecting onto the first coordinate, computing critical values, and sampling points in fibers. Curves are then traced through these sample points to ensure connectivity across slices, and the construction is recursively applied to the lower-dimensional fibers. This approach leads to singly exponential algorithms in the number of variables, significantly improving upon the doubly exponential complexity of earlier approaches such as cylindrical algebraic decomposition.

\paragraph{Applications.}
Roadmaps have applications in robotics (motion planning), verification, control, and computational geometry, where it is crucial to understand how to navigate within a feasible region defined by polynomial constraints.

\subsubsection{Sums of Squares Techniques.}
\label{sec:sos}
In recent years, particularly following the foundational works of Lasserre and Parrilo \cite{parrilo2000thesis,lasserre2001global}, a new paradigm has emerged that has significantly advanced computational real algebraic geometry: the study of polynomial decompositions as sums of squares.

\begin{definition}
Let \( f \in \mathbb{R}[x_1, \ldots, x_n] \). We say that \( f \) is a \emph{sum-of-squares} (SOS) polynomial if there exist polynomials \( q_1, \ldots, q_k \in \mathbb{R}[x_1, \ldots, x_n] \) such that
\[
f = q_1^2 + \cdots + q_k^2.
\]
\end{definition}

\begin{example}
For instance, 
\[
f(x,y) = x^2 - 2xy + y^2 = (x - y)^2
\]
is a sum of squares.
\end{example}
Note that while the right hand side, i.e., the decomposition of $f$ as a square clearly shows that $f$ is non-negative on \( \mathbb{R}^2 \) this property of $f$ is less obviously clear from its original definition on the right hand side.  
In general, if a polynomial \( f(x) \in \mathbb{R}[x_1, \dots, x_n] \) admits a decomposition 
\[
p(x) = \sum_i q_i(x)^2,
\]
then clearly \( f(x) \geq 0 \) for all \( x \in \mathbb{R}^n \) and the  SOS decomposition provides a \textbf{certificate of non-negativity} for \( p \).

However, not all non-negative polynomials are sums of squares. A classical example due to Motzkin is
\[
M(x, y, z) = x^4 y^2 + x^2 y^4 + z^6 - 3 x^2 y^2 z^2,
\]
which satisfies \( M(x, y, z) \geq 0 \) for all real \( x, y, z \), but is not SOS. This illustrates that the cone of SOS polynomials is strictly contained in the cone of non-negative polynomials, except in special cases (e.g., univariate polynomials or quadratics).

\paragraph{Sums of squares certificates.} Sums of square techniques can play a key role in certifying the emptiness of semi-algebraic sets. Suppose we are given polynomials
\[
g_1, \dots, g_m, \quad h_1, \dots, h_k \in \mathbb{R}[x_1, \dots, x_n],
\]
and consider the semi-algebraic set
\[
S = \left\{ x \in \mathbb{R}^n : g_i(x) \geq 0, \; h_j(x) = 0 \right\}.
\]
Then the so-called \emph{Positivstellens\"atz}  (for a comprehensive overview see  for example \cite{prestel2001positive}) provides a certificate for the infeasibility (i.e., emptiness) of \( S \). For example, with Stengle's Positivstellensatz \cite{stengle1974nullstellensatz}, we obtain that  \( S = \emptyset \), if there exists an identity.
\[
-1 = \sum_{\alpha} \sigma_\alpha(x) m_\alpha(x) + \sum_{j} t_j(x) h_j(x),
\]
where each \( \sigma_\alpha(x) \) is an SOS polynomial and \( m_\alpha(x) \) is a product of the \( g_i \). Such an identity clearly serves as a certificate for the emptiness of $S$: if \( x \in S \), then all \( g_i(x) \geq 0 \) and each \( \sigma_\alpha(x) m_\alpha(x) \geq 0 \), while the right-hand side equals \(-1\), a contradiction.

Thus, such  certificates provide an algebraic method to prove infeasibility of systems of polynomial constraints and are central in real algebraic geometry and polynomial optimization. This leads to the following computational problem.

\begin{pb}[Sum of Squares Decomposition]
Let $f\in\R[x_1,\ldots,x_n]$ be a polynomial of degree $2d$. Decide if there exists a sum of squares decomposition for $f$
\end{pb}
The renewed interest in sums of squares stems from the connection of this computational problem to  to  semidefinite programming (SDP), which can be solved numerically efficient (\cite{vandenberghe1996semidefinite}). The key in this approach is the so called Gram matrix method \cite{choi1995sums} as the following. Let \( p(x) \) be a polynomial of degree \( 2d \). Define \( v(x) \) to be the vector of all monomials of degree at most \( d \). Then \( p(x) \) is SOS if and only if there exists a symmetric matrix \( Q \succeq 0 \) such that
\[
p(x) = v(x)^T Q v(x).
\]
This is known as the \emph{Gram matrix representation} of \( p \). The condition \( Q \succeq 0 \) means that \( Q \) is positive semidefinite, and guarantees that \( p(x) \) is a sum of squares.

To find such a matrix \( Q \), one expands the identity \( p(x) = v(x)^T Q v(x) \) and equates the coefficients of the resulting polynomial with those of \( p \). This leads to a system of linear equations in the entries of \( Q \), together with the semidefinite constraint \( Q \succeq 0 \). Hence, deciding whether \( p \) is SOS reduces to solving an SDP feasibility problem.

Semidefinite programming is a well-studied area of convex optimization, and such problems can be solved efficiently (up to numerical accuracy) using interior point methods. Thus, SOS certificates provide a efficient solution to finding a sums of squares decomposition if existent and thus a relaxation of the generally hard problem of polynomial non-negativity.

In the next sections, we will explore how the mentioned computational problems can be adapted to exploit symmetry when the defining polynomials of the semi-algebraic set are symmetric.

\subsection{Computational Models}\label{seq:comm}

 In this section, we describe some basic algebraic tasks and the computational model used. Our model is a Random Access Machine (RAM) over a field \(\K\).  

\subsubsection{Straight-line Programs.} Let \( f \) be a polynomial of degree \( d \) in \( n \) variables. The standard (dense) encoding of \( f \) is an array of \(\binom{n+d}{n}\) coefficients. In contrast, a sparse encoding only records the nonzero coefficient-exponent tuples. Here, we use an alternative representation: the \emph{straight-line program} (SLP), also known as an \emph{algebraic circuit}.  

The idea of using straight-line programs first emerged in the context of probabilistic polynomial identity testing. In computer algebra applications, SLPs were introduced for tasks such as elimination in univariate problems (see~\cite{heintz1981absolute,kaltofen1988greatest,kaltofen1989factorization}). Our motivation for adopting this representation is rooted in its use in polynomial system-solving algorithms based on Newton-Hensel lifting techniques, initiated in~\cite{giusti1995polynomial,giusti1997lower,giusti1998straight}.

\begin{definition}[Straight-line Programs]
A \emph{straight-line program} $($SLP$)$ computing a set of polynomials \( f_1, \dots, f_m \in \K[x_1, \dots, x_n] \) is a sequence
\[
\gamma = (\gamma_{-n+1}, \dots, \gamma_0, \gamma_1, \dots, \gamma_L),
\]
where the initial inputs are the variables: \(\gamma_{-n+1} := x_1, \dots, \gamma_0 := x_n\), and for \( k > 0 \), each \(\gamma_k \) is one of the following forms:
\[
\gamma_k = a * \gamma_i \quad \text{or} \quad \gamma_k = \gamma_i * \gamma_j,
\]
with \( * \in \{+, -, \times\} \), \( a \in \K \), and \( i, j < k \). The number \( L \) is called the \emph{length} of the program.
\end{definition}

\begin{example}
Consider the polynomial \( x_1^{2^k} \in \K[x_1, \dots, x_n] \).  Its dense encoding consists of the coefficient 1 followed by \(2^k\) zeros and its sparse encoding is the pair \((2^k; 1)\).    A straight-line program computing it is:
    \begin{equation} \label{eq:slp}
        \gamma_0 = x_1, \quad \gamma_1 = \gamma_0 \times \gamma_0 = x_1^2, \quad \dots, \quad \gamma_k = \gamma_{k-1} \times \gamma_{k-1} = x_1^{2^k}.
    \end{equation}
Hence, the dense encoding has length \(2^k + 1\), the sparse encoding has length 2, and the SLP has length at most \(k\).
\end{example}

Straight-line programs are stable under linear changes of variables, unlike sparse encodings. For instance, to compute \((x_1 + x_2)^{2^k}\), we can augment the SLP in~\eqref{eq:slp} with \(\gamma_{-1}' = x_2\) and \(\gamma_1' = \gamma_{-1}' + \gamma_0\). This leads to an SLP of length at most \(k + 1\). In contrast, the dense and sparse encodings of this polynomial have length \(\binom{2^k}{2} = O(2^{2k})\).  Furthermore, some polynomials with many monomials can still be computed by very short SLPs. For example, \( (x_1 + x_2)^{2^k} \) contains \(2^k + 1\) monomials but can be computed with an SLP of length \(k\).

Importantly, using SLPs is not a restrictive assumption, as they generalize both dense and sparse encodings. Specifically, any polynomial of degree \(d\) in \(n\) variables can be computed by an SLP of length at most \(3\binom{n+d}{n}\). This follows from the fact that there are \(\binom{n+d}{n}\) monomials of degree at most \(d\); computing each monomial and summing them up takes at most \(2\binom{n+d}{n}\) operations. For a sparse polynomial with \(N\) nonzero monomials, an SLP of length \(O(Nd)\) suffices.

%

\subsubsection{Zero-dimensional Parametrizations.}
We use univariate polynomials to represent finite sets of points in \(\Kbar^n\). This representation, introduced in the early works of Kronecker and Macaulay \cite{Kronecker1882,Macaulay1916}, has since become a fundamental data structure in computer algebra; see, for instance, \cite{giusti1998straight,Gianni1989,Alonso1996,Giustifast1995,GiustiGrob2001,Rouillier1999}. 

\begin{definition}[Zero-dimensional Parametrization] \label{def:zeor-dim-para}
Let \(V \subset \Kbar^n\) be a finite set defined by polynomials in \(\K[x_1, \dots, x_n]\). A \emph{zero-dimensional parametrization} \(\mathscr{R} = ((q, v_1, \dots, v_n), \gamma)\) of \(V\) consists of:
\begin{itemize}
    \item a square-free polynomial \(q \in \K[T]\), where \(T\) is a new indeterminate and \(\deg(q) = |V|\);
    \item polynomials \(v_1, \dots, v_n \in \K[T]\), each of degree at most \(\deg(q)\), such that
    \[
    V = \left\{ \left( \frac{v_1(\tau)}{q'(\tau)}, \dots, \frac{v_n(\tau)}{q'(\tau)} \right) \in \Kbar^n \, : \, q(\tau) = 0 \right\},
    \]
    where \(q' = \frac{\mathrm{d}q}{\mathrm{d}T}\);
    \item a linear form \(\gamma = \gamma_1 x_1 + \cdots + \gamma_n x_n\), with coefficients in \(\K\), such that
    \[
    \gamma(v_1, \dots, v_n) \equiv T q' \mod q,
    \]
    i.e., the roots of \(q\) are the values taken by \(\gamma\) on \(V\).
\end{itemize}
When these conditions hold, we write \(V = Z(\mathscr{R})\).
\end{definition}
One reason for using this representation is that the rational parametrization, with \(q'\) in the denominator, allows effective control of the bit-size of coefficients when \(\K\) is the field of rational numbers, or of the degree in \(y\) when \(\K\) is the field of fractions \(k(y)\), where \(k\) is a field \cite{Alonso1996,GiustiGrob2001,Rouillier1999}.
\begin{example}
Consider the finite set \( V = \{(1,2), (3,4)\} \subset \overline{\mathbb{K}}^2 \). A zero-dimensional parametrization \(\mathscr{R} = ((q, v_1, v_2), \gamma)\) of \(V\) can be given as follows:
\begin{align*}
& q(t) = (T - 1)(T - 3) = T^2 - 4T + 3,\\
& v_1(t) = T \cdot q'(T) = T \cdot (2T - 4), \\
& v_2(t) = (T + 1) \cdot q'(T) = (T + 1) \cdot (2T - 4),
\end{align*}
and the linear form
\(
\gamma = x_1.
\)
Then, for each root \(\tau\) of \(q(T)\),
\[
\left( \frac{v_1(\tau)}{q'(\tau)}, \frac{v_2(\tau)}{q'(\tau)} \right) \in V.
\]
Indeed, at \(\tau=1\), we have
\(
q'(1) = 2(1) - 4 = -2,
\) and
\[
\frac{v_1(1)}{q'(1)} = \frac{1 \times (-2)}{-2} = 1, \quad
\frac{v_2(1)}{q'(1)} = \frac{(1+1) \times (-2)}{-2} = 2,
\]
giving the point \((1,2)\).
Similarly, at \(\tau=3\),
\(
q'(3) = 2(3) - 4 = 2,
\) and
\[
\frac{v_1(3)}{q'(3)} = \frac{3 \times 2}{2} = 3, \quad
\frac{v_2(3)}{q'(3)} = \frac{(3+1) \times 2}{2} = 4,
\]
giving the point \((3,4)\).
Thus, \(\mathscr{R}\) parametrizes the finite set \(V\).
\end{example}


\subsubsection{Encoding of Real Algebraic Points.}
\label{subs_realpoint}
We encode such a point using univariate polynomials and a Thom encoding.  
\begin{definition}
Let $\mathcal{F} \subseteq \R[x_1, \dots, x_n]$ and let $x \in \R^n$.  
A mapping $\zeta: \mathcal{F} \rightarrow \{-1, 0, 1\}$ is called a {\em sign condition} on $\mathcal{F}$.  
The {\em sign condition realized by} $\mathcal{F}$ at the point $x$ is the map
\[
\sign(\mathcal{F}, x): \mathcal{F} \rightarrow \{-1, 0, 1\}, \quad f \mapsto \sign(f(x)).
\]
We say that $\mathcal{F}$ {\em realizes} the sign condition $\zeta \in \{-1, 0, 1\}^{\mathcal{F}}$ if and only if $$\sign(\mathcal{F}, x) = \zeta.$$

Let $q$ be a polynomial in $\Q[T]$. Define the list of formal derivatives of $q$ as
\[
\der(q) := \{q, q^{(1)}, q^{(2)}, \dots, q^{(\deg(q))}\},
\]
where $q^{(i)}$ denotes the $i$-th formal derivative of $q$ for $i \ge 1$.
A mapping $\zeta: \der(q) \rightarrow \{-1, 0, 1\}$ is called a {\em Thom encoding} of a real root $\vartheta \in \R$ of $q$ if $\zeta(q) = 0$ and the sign condition $\zeta$ is realized at $\vartheta$, i.e., $\sign(\der(q), \vartheta) = \zeta$.

\end{definition}
Note that distinct real roots of $q$ correspond to distinct Thom encodings~\cite[Proposition~2.28]{BPR06}.  
For further details on Thom encodings and real univariate representations, see~\cite[Chapters 2 and 12]{BPR06}.

\begin{definition}[{Representations of Real Algebraic Points}]
    A {\em real univariate representation} representing $x \in \R^n$
    consists of: 
    \begin{itemize}
        \item a zero-dimensional parametrization 
        \[
        \mathscr{Q} = (q(T), q_0(T), v_1(T), \dots, v_n(T)),
        \]
        where $q, q_0, v_1, \dots, v_n \in \Q[T]$, with $\gcd(q, q_0) = 1$  
        \((\)e.g., one can take a square-free polynomial \(q\) and \(q_0 = \frac{dq}{dT}\) as in Definition~\ref{def:zeor-dim-para} \()\), and
        \item a Thom encoding $\zeta$ representing an element $\vartheta \in \R$ such that
        \[
        q(\vartheta) = 0 \quad \text{and} \quad x =
        \left(\frac{v_1(\vartheta)}{q_0(\vartheta)}, \dots,
        \frac{v_n(\vartheta)}{q_0(\vartheta)}\right) \in \R^n.
        \]
    \end{itemize}
\end{definition}

\begin{example}
A real univariate representation for the point $x = \left( \frac{1}{2}, \, \frac{3}{2} - \frac{\sqrt{2}}{4} \right)$ is $(\mathscr{Q}, \zeta)$, where
\[
\mathscr{Q} = (q, q_0, q_1, q_2) = (T^2 - 2, 2T, T, 3T - 1),
\]
and $\zeta = (1, 1)$ (equivalently, $\zeta = (+, +)$, i.e., $q' > 0 \wedge q'' > 0$).
Indeed, let $\vartheta = \sqrt{2}$, a root of $q$. Then
\[
x = \left( \frac{q_1(\vartheta)}{q_0(\vartheta)}, \, \frac{q_2(\vartheta)}{q_0(\vartheta)} \right).
\]
Moreover, $q'(T) = 2T$ and $q''(T) = 2$, so $q'(\vartheta) > 0$ and $q''(\vartheta) > 0$.  
Thus, $\zeta = (+, +)$ serves as a Thom encoding of $\vartheta$.
\end{example}
Let \( q \in \mathbb{R}[T] \) be a non-zero polynomial. We define the 
\(
\textsc{Thom\_Encoding}(q),
\)
which returns the ordered list of Thom encodings of the real roots of \( q \). This routine can be implemented using \cite[Algorithm 10.14]{BPR06}, and it requires 
\[
O(\delta_q^4 \log(\delta_q))
\]
arithmetic operations over \(\mathbb{Q}\), where \( \delta_q = \deg(q) \).

\begin{example}
Let \( q(T) = T^3 - 3T + 1 \in \mathbb{Q}[T] \). This polynomial has three real roots. Using \textsc{Maple}'s \texttt{RealRootIsolate}, we find numerical approximations:
\[
\vartheta_1 \approx -1.879,\quad \vartheta_2 \approx 0.347,\quad \vartheta_3 \approx 1.532.
\]
We compute the derivatives:
\[
q'(T) = 3T^2 - 3, \quad q''(T) = 6T.
\]
Evaluating signs at each root:
\[
\begin{aligned}
& q'(\vartheta_1) < 0,\quad q''(\vartheta_1) < 0 && \Rightarrow \text{encoding } (-,-),\\
& q'(\vartheta_2) > 0,\quad q''(\vartheta_2) < 0 && \Rightarrow \text{encoding } (+,-),\\
& q'(\vartheta_3) > 0,\quad q''(\vartheta_3) > 0 && \Rightarrow \text{encoding } (+,+).
\end{aligned}
\]
Thus, \textsc{Thom\_Encoding}$(q)$ returns:
\(
[(-,-),\ (+,-),\ (+,+)].
\)
\end{example}

Now, let \( p \in \mathbb{R}[T] \) be another polynomial of degree \( \delta_p \), and let \( \thom(q) \) denote the list of Thom encodings of the real roots of \( q \). We define the routine
\(
\textsc{Sign\_ThomEncoding}(q, p, \thom(q)),
\)
which, for each encoding \( \zeta \in \thom(q) \) corresponding to a real root \( \vartheta \) of \( q \), returns the sign of \( p(\vartheta) \). This procedure can be carried out using \cite[Algorithm 10.15]{BPR06}, and has complexity
\[
O\left( \delta_q^2 \left( \delta_q \log(\delta_q) + \delta_p \right) \right)
\]
arithmetic operations in \(\mathbb{Q}\).
\begin{example}
Let \( p(T) = T^2 - 2 \). We evaluate the sign of \( p \) at each real root of \( q \):
\[
\begin{aligned}
& p(\vartheta_1) = (-1.879)^2 - 2 \approx 1.53 > 0,\\
& p(\vartheta_2) \approx 0.347^2 - 2 \approx -1.88 < 0,\\
& p(\vartheta_3) \approx 1.532^2 - 2 \approx 0.35 > 0.
\end{aligned}
\]
Hence, \textsc{Sign\_ThomEncoding}$(q, p, \thom(q))$ returns:
\(
[+,\ -,\ +].
\)
\end{example}
We can use the following commands in \textsc{Maple}, a computer algebra software. 
\begin{codebox}
\begin{verbatim}
with(RealRootIsolate):
with(RootFinding):
q := T^3 - 3*T + 1:
p := T^2 - 2:
RealRootIsolate(q); # return intervals containing each real root 
\end{verbatim}
\end{codebox}

\hspace{-0.5 cm}\textsc{Maple} doesn't have \textsc{Thom\_Encoding} as a named function, but we can simulate the process using \texttt{RealRootClassification} or by evaluating derivatives' signs:
\begin{codebox}
\begin{verbatim}
f := q:
df := diff(f, T):
d2f := diff(df, T):
roots := fsolve(f, T, useassumptions=true); # approx roots

eval([sign(df), sign(d2f)], T=~[roots[]]); # sign  at each root
\end{verbatim}
\end{codebox}
\hspace{-0.5 cm}Finally, 
\begin{codebox}
\begin{verbatim}
map(r -> sign(eval(p, T = r)), [roots]);
\end{verbatim}
\end{codebox}
\hspace{-0.5 cm}gives us the sign of \(p(\vartheta)\) at each root \(\vartheta\) of \(q\).

\subsubsection{The Probabilistic Aspects.}
Randomized (or probabilistic) algorithms are algorithms that make random choices during their execution. These algorithms are widely used in both theoretical computer science and practical applications, as they often lead to simpler, faster, or more elegant solutions than their deterministic counterparts. There are several types of randomized algorithms, each with different guarantees on correctness and performance. The most common categories are Las Vegas algorithms and Monte Carlo algorithms.

A {Las Vegas algorithm} is a type of randomized algorithm that always produces the correct output. However, its running time is a random variable that may vary depending on the random choices made during execution.
The key feature of Las Vegas algorithms is that they never return incorrect results. Instead, they may run longer in some cases while still ensuring correctness. Las Vegas algorithms are ideal when correctness is critical and some variability in performance is acceptable.

In contrast, {Monte Carlo algorithms} have a fixed running time but may return incorrect results with a small probability (typically less than \(1/2\)). These algorithms trade certainty of correctness for predictable performance. Although there is a small chance of error, repeating the algorithm multiple times can reduce this probability to an arbitrarily low level.
Monte Carlo algorithms are useful in situations where speed is crucial and a small probability of error is acceptable. This error probability can be rigorously controlled using the Schwartz–Zippel lemma (see, e.g., \cite{Schwartz1980} and \cite[Lemma~6.44]{GathenGerhard2003}).

\begin{lemma}[Schwartz–Zippel Lemma]
Let \(R\) be an integral domain and \(S\) a finite subset of \(R\). Let \(f\) be a polynomial in \(R[x_1, \dots, x_n]\) of total degree at most \(d\). If \(f\) is not the zero polynomial, then \(f\) has at most \(d|S|^{n-1}\) zeros in \(S^n\).
\end{lemma}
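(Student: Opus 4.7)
The plan is to prove the bound by induction on the number of variables $n$, which is the standard approach. The base case $n=1$ is the classical fact that a nonzero polynomial of degree at most $d$ over an integral domain has at most $d$ roots; this is valid because $R$ being an integral domain allows factorization $f(x) = (x-\alpha) g(x)$ at each root and precludes zero divisors creating extra roots.

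For the inductive step, assuming the statement for $n-1$ variables, I would write $f$ as a polynomial in $x_1$ with coefficients in $R[x_2, \ldots, x_n]$, namely
\[
f(x_1, \ldots, x_n) = \sum_{i=0}^{k} f_i(x_2, \ldots, x_n)\, x_1^i,
\]
where $k$ is the largest index with $f_k \not\equiv 0$. Note that $\deg f_k \le d - k$. The zeros of $f$ in $S^n$ split naturally into two classes according to whether the leading coefficient $f_k$ vanishes at the projection $(a_2, \ldots, a_n)$.

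In the first class, $(a_2, \ldots, a_n)$ is a zero of $f_k$ in $S^{n-1}$; by the inductive hypothesis there are at most $(d-k)|S|^{n-2}$ such tuples, each contributing at most $|S|$ zeros of $f$ (one per choice of $a_1 \in S$), giving at most $(d-k)|S|^{n-1}$ zeros. In the second class, $f_k(a_2, \ldots, a_n) \neq 0$, so $f(x_1, a_2, \ldots, a_n)$ is a nonzero univariate polynomial of degree exactly $k$ and hence has at most $k$ roots in $S$ by the base case; since there are at most $|S|^{n-1}$ such tuples, this contributes at most $k|S|^{n-1}$ zeros. Adding gives $(d-k+k)|S|^{n-1} = d|S|^{n-1}$, as required.

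I do not expect a real obstacle: the argument is elementary and the only subtlety is ensuring that $R$ being an integral domain is used correctly in the base case, so that a degree-$k$ univariate polynomial in $R[x_1]$ has at most $k$ roots (this can fail over arbitrary commutative rings, e.g. $\Z/8\Z$). The induction then proceeds by a clean counting argument with no further technicalities.
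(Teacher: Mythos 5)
Your proof is correct: the induction on the number of variables, splitting zeros according to whether the leading coefficient $f_k$ vanishes at the projected point, is the standard argument, and your use of the integral-domain hypothesis in the univariate base case is exactly where it is needed. The paper itself gives no proof of this lemma (it cites Schwartz and von zur Gathen--Gerhard), and your argument matches the classical one found in those references, so there is nothing to reconcile.
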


\begin{example}
Let \(S\) be a finite set of complex numbers of cardinality \(100\), and consider the generic quadratic polynomial \(h = ax^2 + bx + c\). The polynomial \(h\) has two distinct roots when the discriminant \(\Delta(a,b,c) = a(b^2 - 4ac)\) is nonzero.

Choosing \((\alpha, \beta, \gamma)\) uniformly at random from \(S^3\), by the Schwartz–Zippel lemma, the probability that \(\Delta(\alpha, \beta, \gamma) = 0\) is at most \[\frac{1}{\deg(\Delta)\cdot |S|^2} = \frac{1}{3 \cdot 100^2} = \frac{1}{30000}.\]Hence, the probability that \(h = ax^2 + bx + c\) has two distinct roots is at least \(1 - {1}/{30000}\).
\end{example}

The randomized algorithms presented in this chapter are of the Monte Carlo type in the sense that they make random choices of points that lead to the correctness of computations; these points are chosen outside certain Zariski closed subsets (e.g, the \(\Delta(a,b,c)\) above) of an appropriate affine space. More precisely, we define sufficiently large finite subsets of the numbers, with cardinalities depending on the degrees of the polynomials defining these Zariski closed sets, and then make the necessary random choices uniformly from these sets.

\subsubsection{Generic Properties.} In algebraic geometry, certain properties hold for “most” objects of a given type. For example, most square matrices are invertible, and most univariate polynomials of degree \(d\) have \(d\) distinct solutions. We use the term “generic” to describe such situations.

\begin{definition}
Let \(X\) be a variety. A subset \(Y \subset X\) is called \emph{generic} if it contains a non-empty Zariski open subset of \(X\). A property is said to be \emph{generic} if the set of points on which that property holds is a generic subset.
\end{definition}

In other words, a property is said to hold generically for a polynomial system \(\f\) if there exists a nonzero polynomial in the coefficients of \(\f\) such that the property holds for all \(\f\) for which that polynomial does not vanish. Note that the notion of genericity depends on the context, and care must be exercised in its use.

\begin{example}
Consider a quadratic polynomial \(h = ax^2 + bx + c\). Generically, this equation has two solutions in \(\C\), counted with multiplicity. This property holds when \(a \ne 0\). Let \(\calO\) be the Zariski open subset of \(\C\) defined as the complement of the Zariski closed set \(V(a)\); this set is non-empty, since for example, \(1 \in \calO\). Then, for any \(a \in \calO\), the equation \(ax^2 + bx + c = 0\) has two solutions in \(\C\), counted with multiplicity.

On the other hand, generically, the equation \(ax^2 + bx + c = 0\) has two \emph{distinct} solutions in \(\C\). This property holds when \(a(b^2 - 4ac) \ne 0\). Define the non-empty Zariski open subset \(\calO_1 \subset \C^3\) as the complement of the Zariski closed set \(V(a(b^2 - 4ac))\). Then, for any point \((a, b, c) \in \calO_1\), the equation \(ax^2 + bx + c = 0\) has two distinct solutions in \(\C\).
\end{example}

\subsection{Symmetric Polynomials}\label{sec:sym}
Let \(\K[x_1, \dots, x_n]\) be the ring of polynomials in \(n\) variables over a field \(\K\). The symmetric group \(\S_n\) acts on this ring by permuting the variables. That is, for any \(\sigma \in \S_n\) and \(f \in \K[x_1, \dots, x_n]\),
\(
\sigma(f) = f(x_{\sigma(1)}, \dots, x_{\sigma(n)}).
\)

\begin{definition}[Symmetric Polynomials]
A polynomial \(f \in \K[x_1, \dots, x_n]\) is called \emph{symmetric} \emph{(}or \(\S_n\)-\emph{invariant)} if it is invariant under the action of the symmetric group. That is,
\[
\sigma(f) = f \ \text{for all } \ \sigma \in \S_n.
\]
\end{definition}
For a non-negative integer \(k\), let \(\S_n^k\) denote the set of homogeneous symmetric polynomials of degree \(k\). The symmetric polynomials form a subring \(\K[x_1, \dots, x_n]^{\S_n}\) of \(\K[x_1, \dots, x_n]\), which is a graded ring with
\[
\K[x_1, \dots, x_n]^{\S_n} = \bigoplus_{k \ge 0} \S_n^k.
\]

\begin{definition}[Elementary Symmetric Polynomials]
For each integer \(k\) with \(1 \leq k \leq n\), the \emph{elementary symmetric polynomial} \(e_k(x_1, \dots, x_n)\) is defined as
\[
e_k(x_1, \dots, x_n) = \sum_{1 \leq i_1 < i_2 < \cdots < i_k \leq n} x_{i_1} x_{i_2} \cdots x_{i_k}.
\]
By convention, \(e_0 = 1\).
\end{definition}
Let \(T\) be a new variable and consider the polynomial 
\[P(T) = (T- x_1)(T-x_2) \cdots (T-x_n)\] with \(\X = (x_1, \dots, x_n)\) being its roots. If we expand the right-hand side, we have
\begin{equation} \label{eq:vieta}
    P(T) = T^n - e_1(\X)T^{n-1} + \cdots + (-1)^{n-1} e_{n-1}(\X) T^{n-1} + (-1)^ne_n(\X).
\end{equation} In general, for any univariate polynomial of degree \(n\)
\[Q(T) = c_nT^n + \cdots + c_1T + c_0 \text{ with } c_n \ne 0\] has \(n\) complex roots \(\alpha_1, \dots, \alpha_n\). Then we have the following, which are known as Vieta's formulas, named after  François Viète
\[e_k(\alpha_1, \dots, \alpha_n) = (-)^k \frac{c_{n-k}}{c_n}\text{\ for } k = 1, \dots, n. \]
\begin{definition}[Complete Homogeneous Symmetric Polynomials]
For each non-negative integer \(k\), the \emph{complete homogeneous symmetric polynomial} \(h_k(x_1, \dots, x_n)\) is defined by
\[
h_k(x_1, \dots, x_n) = \sum_{\substack{i_1 + \cdots + i_n = k \\ i_j \geq 0}} x_1^{i_1} x_2^{i_2} \cdots x_n^{i_n}.
\]
In particular, \(h_0 = 1\).
\end{definition}

\begin{definition}[Power Sum Symmetric Polynomials]
For each positive integer \(k\), the \emph{power sum symmetric polynomial} \(p_k(x_1, \dots, x_n)\) is defined as
\[
p_k(x_1, \dots, x_n) = \sum_{i=1}^n x_i^k.
\]
\end{definition}

\begin{definition}[Monomial Symmetric Polynomials]
Given a partition \(\lambda = (\lambda_1 \geq \lambda_2 \geq \cdots \geq \lambda_\ell > 0)\) with \(\ell \leq n\), the \emph{monomial symmetric polynomial} \(m_\lambda(x_1, \dots, x_n)\) is the sum of all distinct monomials obtained by permuting the variables in the monomial
\[
x_1^{\lambda_1} x_2^{\lambda_2} \cdots x_\ell^{\lambda_\ell}.
\]
Formally,
\[
m_\lambda = \sum_{\alpha} x_1^{\alpha_1} x_2^{\alpha_2} \cdots x_n^{\alpha_n},
\]
where the sum runs over all distinct rearrangements \(\alpha = (\alpha_1, \dots, \alpha_n)\) of \(\lambda\) padded with zeros to length \(n\).
\end{definition}

The relations among these different generating sets are classical and given by \emph{Newton's identities}, which express power sums in terms of elementary symmetric polynomials and vice versa.

\begin{theorem}[Newton's Identities]
For \(k = 1, 2, \dots, n\), the power sums \(p_k\) and elementary symmetric polynomials \(e_k\) satisfy the recursive relations
\[
p_k = (-1)^{k-1} k e_k + \sum_{j=1}^{k-1} (-1)^{j-1} e_j p_{k-j},
\]
where by convention \(e_0 = 1\) and \(p_0 = n\).
Equivalently, these identities can be rearranged to express each \(e_k\) as a polynomial in the power sums \(\{p_1, \dots, p_k\}\).
\end{theorem}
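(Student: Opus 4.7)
The plan is to prove Newton's identities by a generating-function argument in the formal power series ring $\K[x_1,\dots,x_n][[t]]$. First I would introduce
$$E(t) = \prod_{i=1}^n (1 + x_i t) = \sum_{k=0}^n e_k(\X)\, t^k,$$
and exploit the fact that its logarithmic derivative encodes the power sums. Expanding the geometric series term-by-term gives
$$\frac{E'(t)}{E(t)} = \sum_{i=1}^n \frac{x_i}{1 + x_i t} = \sum_{i=1}^n \sum_{m\geq 0} (-1)^m x_i^{m+1} t^m = \sum_{k \geq 1} (-1)^{k-1} p_k(\X)\, t^{k-1}.$$

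The next step is to clear denominators to obtain the key identity
$$E'(t) = E(t) \cdot \sum_{k \geq 1} (-1)^{k-1} p_k\, t^{k-1},$$
and to equate coefficients of $t^{k-1}$ on both sides. The left-hand side contributes $k e_k$, while on the right one extracts a convolution sum of the coefficients of $E(t)$ against the power-sum series. Isolating the term involving $p_k$ itself yields the symmetric relation
$$k e_k = (-1)^{k-1} p_k + \sum_{m=1}^{k-1} (-1)^{m-1} e_{k-m}\, p_m,$$
which already encodes Newton's identities.

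Solving this equation for $p_k$ and reindexing by $j = k-m$ then gives the claimed form $p_k = (-1)^{k-1} k e_k + \sum_{j=1}^{k-1} (-1)^{j-1} e_j p_{k-j}$. The converse statement, that each $e_k$ can be written as a polynomial in $\{p_1,\dots,p_k\}$, follows by induction on $k$ from the same recursion, provided $k$ is invertible in $\K$ (for instance when $\K$ has characteristic zero).

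The main (minor) obstacle is tracking signs carefully when passing from the implicit convolution identity to the stated rearrangement, since the substitution $m \mapsto k-j$ contributes a factor $(-1)^{k-m-1} = (-1)^{j-1}$ that is easy to misplace. An alternative route I would consider, which avoids power series altogether, is to start from the vanishing $P(x_i) = \prod_\ell(x_i - x_\ell) = 0$, multiply by $x_i^{k-n}$, and sum over $i$; this cleanly produces Newton's identities for $k \geq n$, but the range $k < n$ then requires a separate combinatorial argument, which is why the generating-function derivation is preferable as a uniform proof valid for every $k \geq 1$.
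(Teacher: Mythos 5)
The paper states Newton's identities as a classical fact without supplying a proof, so there is no "paper's own proof" to compare against; your task was effectively to supply one from scratch. Your generating-function argument is correct and complete. The key steps — introducing $E(t) = \prod_i (1 + x_i t) = \sum_k e_k t^k$, computing the logarithmic derivative $E'(t)/E(t) = \sum_{k\geq 1}(-1)^{k-1}p_k t^{k-1}$, clearing denominators, and matching coefficients of $t^{k-1}$ — all check out. The coefficient extraction on the right-hand side gives $\sum_{m=1}^{k}(-1)^{m-1}e_{k-m}p_m$, the $m=k$ term isolates $(-1)^{k-1}p_k$ (using $e_0=1$), and the reindexing $j = k-m$ correctly produces $(-1)^{j-1}$ because $(-1)^{k-1}(-1)^{m-1} = (-1)^{2k-j-2} = (-1)^{j}$, whose negation is $(-1)^{j-1}$; you were right to flag this as the delicate spot, but you land on the stated form. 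Your caveat that expressing $e_k$ in terms of $p_1,\dots,p_k$ requires $k$ to be invertible in $\K$ is correct and aligns with the paper's later remark that the power sums form a generating set only in characteristic zero. The alternative you sketch — substituting each root $x_i$ into the monic polynomial $P$ with those roots, multiplying by $x_i^{k-n}$, and summing — is the other standard route; as you note, it handles $k \geq n$ cleanly but needs a separate truncation argument for $k < n$, which is exactly the range the theorem addresses, so the uniform generating-function derivation is the better choice here.
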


\subsubsection{Fundamental Theorem of Symmetric Polynomials.}
From the elementary symmetric polynomials, we can construct other symmetric polynomials by taking polynomials in \(e_1, \dots, e_n\). For example,
\[
e_1e_2 - 3e_3 = x^2 y + x y^2 + y^2 z + y z^2 + z^2 x + z x^2.
\]
Conversely, every symmetric polynomial can be uniquely represented in terms of the elementary symmetric polynomials, which is known as the Fundamental Theorem of Symmetric Polynomials (FTSP) (see, e.g., {\cite[Theorem\ 3.1]{derksen2015}}).

\begin{theorem}[Fundamental Theorem of Symmetric Polynomials]\label{thm:funda}
Every symmetric polynomial \(f \in \K[x_1, \dots, x_n]^{\S_n}\) can be expressed uniquely as a polynomial in the elementary symmetric polynomials \(e_1, e_2, \dots, e_n\). In other words, there exists a unique polynomial \(F \in \K[y_1, \dots, y_n]\) such that
\[
f(x_1, \dots, x_n) = F(e_1(x_1, \dots, x_n), e_2(x_1, \dots, x_n), \dots, e_n(x_1, \dots, x_n)).
\]
\end{theorem}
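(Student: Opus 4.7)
The plan is to prove existence by a lex-order leading-term induction, and uniqueness by induction on the number of variables.

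For existence, equip monomials with the lexicographic order, i.e.\ compare $x_1^{a_1} \cdots x_n^{a_n}$ via the exponent tuples read left to right. Given a nonzero symmetric $f \in \K[x_1,\dots,x_n]^{\S_n}$, let $\lambda\, x_1^{a_1}\cdots x_n^{a_n}$ be the lex-leading term. Since $\sigma(f)=f$ for every $\sigma\in\S_n$, the tuple $(a_1,\dots,a_n)$ must already dominate all its rearrangements, forcing $a_1\ge a_2\ge\cdots\ge a_n$. I would next compute the leading monomial of an arbitrary product $e_1^{c_1}e_2^{c_2}\cdots e_n^{c_n}$: the lex-leading monomial of $e_k$ is $x_1x_2\cdots x_k$, so the leading monomial of this product is $x_1^{c_1+c_2+\cdots+c_n}\, x_2^{c_2+\cdots+c_n}\,\cdots\, x_n^{c_n}$ with coefficient one. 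Choosing $c_k = a_k-a_{k+1}$ for $k<n$ and $c_n=a_n$ matches the leading monomial of $f$ exactly. Hence
\[
f' \;=\; f \;-\; \lambda\, e_1^{a_1-a_2}\, e_2^{a_2-a_3}\cdots e_{n-1}^{a_{n-1}-a_n}\, e_n^{a_n}
\]
is again symmetric and has strictly smaller leading monomial. Because lex is a well-ordering on monomials of bounded total degree, iterating this cancellation terminates with $f'=0$, giving an expression $f = F(e_1,\dots,e_n)$ for some $F\in\K[y_1,\dots,y_n]$.

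For uniqueness, it suffices to show the $e_i$ are algebraically independent over $\K$, i.e.\ that $G(e_1,\dots,e_n)=0$ in $\K[x_1,\dots,x_n]$ implies $G=0$ in $\K[y_1,\dots,y_n]$. I would induct on $n$. The base case $n=1$ is immediate since $e_1=x_1$. For the inductive step, suppose $G\ne0$ and write
\[
G(y_1,\dots,y_n) \;=\; \sum_{k\ge k_0} G_k(y_1,\dots,y_{n-1})\, y_n^{k},
\]
with $G_{k_0}\ne 0$. Since $e_n = x_1\cdots x_n$ is a nonzerodivisor in $\K[x_1,\dots,x_n]$, I may cancel the common factor $e_n^{k_0}$ from $G(e_1,\dots,e_n)=0$ and thereby reduce to the case $k_0=0$. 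Now specialize $x_n=0$: this kills $e_n$ and sends $e_k(x_1,\dots,x_n)$ to $e_k(x_1,\dots,x_{n-1})$ for $k<n$. The identity $G(e_1,\dots,e_n)=0$ then yields $G_0\bigl(e_1(x_1,\dots,x_{n-1}),\dots,e_{n-1}(x_1,\dots,x_{n-1})\bigr)=0$ in $\K[x_1,\dots,x_{n-1}]$, and the inductive hypothesis forces $G_0=0$, contradicting $k_0=0$.

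The main obstacle is the uniqueness step: the existence algorithm is a straightforward lex-reduction, but the algebraic independence argument must handle the interplay between the vanishing of $e_n$ under $x_n=0$ and the nontrivial behavior of $e_1,\dots,e_{n-1}$ under the same specialization. The trick of first stripping off the $e_n^{k_0}$ factor, legitimized by the fact that $e_n$ is not a zerodivisor, is what keeps the induction clean; without it, specializing $x_n=0$ could delete information needed to invoke the inductive hypothesis.
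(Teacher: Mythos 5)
Your existence argument is exactly the paper's sketch: the Gauss lex-reduction, in which one subtracts the monic product $e_1^{a_1-a_2}e_2^{a_2-a_3}\cdots e_n^{a_n}$ chosen to match the lex-leading term of the symmetric polynomial, and the process terminates because the leading monomial strictly decreases within a well-ordered set of monomials of bounded total degree. The paper stops at this point, presenting a worked example and delegating the full proof to Cox--Little--O'Shea, and in particular does not argue uniqueness. You go further and supply a correct and self-contained uniqueness proof by establishing algebraic independence of $e_1,\dots,e_n$, inducting on $n$ and specializing $x_n=0$; the step of first cancelling the common $e_n^{k_0}$ factor (legitimate because $e_n=x_1\cdots x_n$ is a nonzerodivisor in the polynomial ring) is precisely what ensures $G_0\ne 0$ survives the specialization, so that the inductive hypothesis in $n-1$ variables applies. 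Both halves of your argument are sound, so your proposal is a fully worked-out version of what the paper only sketches, with the uniqueness half being genuinely added content rather than a retelling.
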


There are many proofs of the FTSP. One such proof, given by Gauss, includes what is considered the earliest explicit statement of the lexicographic order among monomials: given two terms \(M x^\alpha y^\beta z^\gamma \cdots\) and \(M' x^{\alpha'} y^{\beta'} z^{\gamma'} \cdots\), the first is considered greater if:
\[
\text{either } \alpha > \alpha', \text{ or } \alpha = \alpha' \text{ and } \beta > \beta', \text{ or } \alpha = \alpha', \beta = \beta', \text{ and } \gamma > \gamma', \text{ etc}.
\]
For example, \(x^2 y > x y^{10}\). We will not present a full proof, which, in fact, yields an algorithm for writing a symmetric polynomial in terms of the elementary symmetric polynomials, but instead provide a concrete example to illustrate how the algorithm works. A detailed proof can be found in \cite[Chapter 7 - Section 1 - Theorem 3]{Cox1997}.

\begin{example} \label{ex:FTSF}
Consider the symmetric polynomial
\[
f = x^2 y + x y^2 + y^2 z + y z^2 + z^2 x + z x^2 \in \K[x, y, z].
\]
The largest monomial under the lexicographic order is \(x^2 y\), corresponding to the exponent vector \(\alpha = 2, \beta = 1\). We then compute
\[
f_1 = f - e_1^{\alpha - \beta} e_2^\beta = f - (x + y + z)(xy + yz + zx) = -3xyz.
\]
Clearly, \(f_1 = -3e_3\), and thus \(f = e_1 e_2 - 3e_3\).
\end{example}

In general, if the "largest" monomial (under lex order) is \(x_1^{\alpha_1} \cdots x_n^{\alpha_n}\), then \(\alpha_i \ge \alpha_{i+1}\) for all \(i = 1, \dots, n-1\). Moreover, the polynomial
\[
e_1^{\alpha_1 - \alpha_2} e_2^{\alpha_2 - \alpha_3} \cdots e_{n-1}^{\alpha_{n-1} - \alpha_n} e_n^{\alpha_n}
\]
shares the same largest monomial. We define
\[
f_1 = f - M \cdot e_1^{\alpha_1 - \alpha_2} e_2^{\alpha_2 - \alpha_3} \cdots e_{n-1}^{\alpha_{n-1} - \alpha_n} e_n^{\alpha_n},
\]
where \(M\) is the coefficient of the largest monomial in \(f\). The new polynomial \(f_1\) then has a strictly smaller leading monomial. We repeat this process until all terms are accounted for.

\medskip

Besides the elementary symmetric polynomials, the ring \(\K[x_1, \dots, x_n]^{\S_n}\) is also generated by the complete homogeneous symmetric polynomials \(h_1, \dots, h_n\) and the power sum symmetric polynomials \(p_1, \dots, p_n\). That is,
\[
\K[x_1, \dots, x_n]^{\S_n} = \K[h_1, \dots, h_n] = \K[p_1, \dots, p_n].
\]
The sets \((p_k)\) and \((h_k)\) form algebraically independent generating sets, making \(\K[x_1, \dots, x_n]^{\S_n}\) isomorphic to a polynomial ring in these variables. However, the power sums \((p_k)\) form an algebraically independent set only over fields of characteristic zero (e.g., \(\mathbb{Q}, \mathbb{R}, \mathbb{C}\)).

\begin{example} \label{ex_10}
Consider the symmetric polynomial
\[
f = x_1^2 x_2 + x_1 x_2^2 + x_2^2 x_3 + x_2 x_3^2 + x_3^2 x_1 + x_3 x_1^2 \in \mathbb{Q}[x_1, x_2, x_3],
\]
as in Example~\ref{ex:FTSF}. Then
\(
f = e_1 e_2 - 3e_3.
\)
The complete homogeneous symmetric polynomials in three variables are:
\[
h_1 = x_1 + x_2 + x_3 = e_1, \quad
h_2 = \sum_{1 \le i \le j \le 3} x_i x_j, \quad
h_3 = \sum_{1 \le i \le j \le k \le 3} x_i x_j x_k.
\]
Using these, we also have:
\(
f = \frac{4}{3} h_1^3 - h_1 h_2 + \frac{1}{3} h_3.
\)
Finally, the power sum symmetric polynomials are:
\[
p_1 = x_1 + x_2 + x_3 = e_1, \quad
p_2 = x_1^2 + x_2^2 + x_3^2, \quad
p_3 = x_1^3 + x_2^3 + x_3^3,
\]
and we find
\(
f = p_1 p_2 - p_3.
\)
\end{example}

In general, let \( \calG \) be a finite matrix group acting on \( \K[x_1, \dots, x_n] \). The  invariant ring of \( \calG \) is the ring \( \K[x_1, \dots, x_n]^\calG \) consisting of all polynomials invariant under the action of \( \calG \). Then, there exist homogeneous, algebraically independent polynomials \(G = (g_1, \dots, g_n)\) and homogeneous invariants \(\sigma = (\sigma_1, \dots, \sigma_r)\) such that:
\[
\K[x_1, \dots, x_n]^\calG = \bigoplus_{j=1}^r \K[g_1, \dots, g_n] \cdot \sigma_j.
\]
Here, \(g_1, \dots, g_n\) and \(\sigma_1, \dots, \sigma_r\) are called the primary and secondary invariants, respectively. In other words, any \(h \in \K[x_1, \dots, x_n]^\calG\) can be uniquely written as
\begin{equation} \label{eq:second-first}
h = \sum_{j=1}^r F_j(g_1, \dots, g_n) \cdot \sigma_j,
\end{equation}
for some \(F_j \in \K[y_1, \dots, y_n]\). This decomposition is know as \emph{Hironaka decomposition of the invariant ring}.

When \(\calG\) is a finite pseudo-reflection group, the invariant ring \(\K[x_1, \dots, x_n]^\calG\) is generated by a finite set of algebraically independent polynomials \(g_1, \dots, g_n\), known as the basic invariants. That is,
\[
\K[x_1, \dots, x_n]^\calG = \K[g_1, \dots, g_n],
\]
or equivalently, any \(f \in \K[x_1, \dots, x_n]^\calG\) can be uniquely expressed as \(F(g_1, \dots, g_n)\) for some \(F \in \K[y_1, \dots, y_n]\). A specific case is when \(\calG = \S_n\), and \(g_1, \dots, g_n\) are the elementary, power sum (in characteristic zero), or complete homogeneous symmetric polynomials.

\smallskip

More generally, consider \(n\) algebraically independent elements \(g_1, \dots, g_n\) in \(\K[x_1, \dots, x_n]\). Let \(\K[g_1, \dots, g_n]\) be the subring of \(\K[x_1, \dots, x_n]\) generated by these elements. Then, for any \(f \in \K[g_1, \dots, g_n]\), there exists a unique polynomial \(F \in \K[y_1, \dots, y_n]\) such that \(f = F(g_1, \dots, g_n)\). Determining \(F\) given \(f\) and the generators \(g_1, \dots, g_n\) is known as the subfield membership problem (see \cite{gathen2003multivariate,sweedler1993using}), which is generally quite challenging. Table \ref{tab:F-computation-strategies} gives a summary for the algorithms and their complexities for this problem. 

\begin{pb}[Representation in Subring]
Let \(g_1, \dots, g_n \in \K[x_1, \dots, x_n]\) be \(n\) algebraically independent polynomials. Given any \(f \in \K[g_1, \dots, g_n]\), compute the unique polynomial \(F \in \K[y_1, \dots, y_n]\) such that
\(
f = F(g_1, \dots, g_n).
\)
\end{pb}

One can always use Gröbner bases to compute \(F\). Specifically, consider the polynomial ring \(\K[x_1, \dots, x_n, y_1, \dots, y_n]\), and fix a monomial order \(\succ\) such that any monomial involving one of the \(x_i\) variables is greater than all monomials in \(\K[y_1, \dots, y_n]\).  
Let \(\mathcal{B}\) be a Gröbner basis with respect to \(\succ\) for the ideal \(\langle y_1 - g_1, \dots, y_n - g_n \rangle \subset \K[x_1, \dots, x_n, y_1, \dots, y_n]\). Then \(f\) can be obtained as the remainder of \(h\) upon division by \(\mathcal{B}\). However, the complexity of this procedure has not been fully analyzed (the same holds for the process presented in Example~\ref{ex:FTSF}).  
For example, with \(f\) as in Example~\ref{ex_10}, one can use the following \textsc{Maple} code to compute the polynomial \(F\) in terms of the power sums.
\begin{codebox}
    \begin{verbatim}
with(Groebner):
F := z:
p1 := x1 + x2 + x3: 
p2 := x1^2 + x2^2 + x3^2: 
p3 := x1^3 + x2^3 + x3^3:
G := [y1 - p1, y2 - p2, y3 - p3, z - f]:
vars := [x1, x2, x3, y1, y2, y3, z]:
GB := Basis(G, plex(x1, x2, x3, y1, y2, y3, z)):
select(has, GB, z);

>> z - y1*y2 - y3
    \end{verbatim}
\end{codebox}

Another straightforward strategy is to use linear algebra by exploiting the symbolic equality \(f = F(g_1, \dots, g_n)\). By comparing the coefficients on both sides of this equality, one obtains a linear system of \(\binom{n+\Delta}{n}\) equations in \(\binom{n+\Delta}{n}\) unknowns, namely, the coefficients of \(F\). The complexity of solving for \(F\) using this method is \(\softO\big(\binom{n+\Delta}{n}^\omega\big)\) operations in \(\K\), where \(\Delta\) is a degree bound for \(F\) and \(\omega\) is the exponent for linear algebra operations.

When \(f\) is invariant under the action of the symmetric group and \((g_1, \dots, g_n)\) are the elementary symmetric polynomials, Gaudry, Schost, and Thiéry showed in \cite{gaudry2006evaluation} that the complexity of evaluating \(F\) is bounded by \(\delta(n) L_2 + 2\) operations in \(\K\), where \(L_2\) is the cost of evaluating \(h\), and \(\delta(n) \leq 4^n (n!)^2\). The authors noted that it is unknown whether the factor \(\delta(n)\), which grows super-polynomially with \(n\), is optimal. In the same context of symmetric polynomials where the \(g_i\) are themselves symmetric, Bläser and Jindal \cite{BlaserJindal18} proved that the complexity of evaluating \(f\) is bounded by \(\softO(d^2 L_2 + d^2 n^2)\) operations in \(\K\), where \(d\) is the degree of \(f\). Recall that \((x_1, \dots, x_n)\) are the roots of the univariate polynomial \(P(T)\) defined in Eq.~\eqref{eq:vieta}. Their main idea is to apply Newton iteration on \(P(T)\) (after a suitable shift) to construct the required power series root \(v_i\) in \(\K[[y_1, \dots, y_n]]\). Building on this, Chaugule et al.~\cite[Theorem 4.16]{chaugule2023schur} extended the Bläser--Jindal result to other bases, such as the homogeneous and power-sum bases, while maintaining the same complexity bound of \(\softO(d^2 L_2 + d^2 n^2)\) operations in \(\K\).

Most recently, Vu \cite{vu2025} proposed a randomized algorithm, along with a complete complexity analysis, for computing \(F\) from \(f\) and \(g_1, \dots, g_n\). Let \(\mathcal{M}(\Delta, n)\) denote the cost of multiplying \(n\)-variate power series up to total degree \(\Delta\). Then the algorithm in \cite{vu2025} achieves complexity  
\[
  \softO\big((nL_1 + n^4 + L_2) \cdot \mathcal{M}(\Delta, n)\big)
\]
operations in \(\K\), where \(L_1\) and \(L_2\) are the lengths of the straight-line programs computing \((g_1, \dots, g_n)\) and \(f\), respectively, and \(\Delta\) is a degree bound on \(F\).

\begin{table}[h]
\centering
\caption{Complexity of Different Strategies for Computing \(F\)}
\renewcommand{\arraystretch}{1.3}
\begin{tabular}{|>{\centering\arraybackslash}p{3.5cm}|
                >{\centering\arraybackslash}p{4cm}|
                >{\centering\arraybackslash}p{4cm}|}
\hline
\textbf{Strategy} & \textbf{Applicable To} & \textbf{Complexity} \\
\hline
Gröbner basis & General & Not fully analyzed \\
\hline
Linear algebra & General & \(\tilde{O}\left(\binom{n + \Delta}{n}^\omega\right)\) \\
\hline
Gaudry et al.\ (2006) & \(f\) symmetric, \(g_i = e_i\) & \(\delta(n) L_2 + 2\) \\
\hline
Bläser–Jindal (2018) & \(f\) symmetric, \(g_i = e_i\) & \(\tilde{O}(d^2 L_2 + d^2 n^2)\) \\
\hline
Chaugule et al.\ (2023) & \(f\) symmetric, various bases & \(\tilde{O}(d^2 L_2 + d^2 n^2)\) \\
\hline
Vu (2025) & General & \(\tilde{O}\big((nL_1 + n^4 + L_2)\, \mathcal{M}(\Delta, n)\big)\) \\
\hline
\end{tabular}
\label{tab:F-computation-strategies}
\end{table}

We conclude with the classical result that provides a degree bound for expressing a symmetric polynomial in terms of the power sum symmetric functions.
\begin{proposition}[{\cite[Chapter I, §2, Theorem 2.3]{Macdonald1995}}]  \label{lm:deg_restrict}
Let \(\K\) be a field of characteristic zero. Then any symmetric polynomial \( f \in \K[x_1, \dots, x_n]^{\mathfrak{S}_n} \) of degree \( d \leq n \) can be uniquely expressed as
\[
f = F(p_1, \dots, p_d),
\]
where \( F \in \K[y_1, \dots, y_d] \) is a polynomial, and \( p_k = \sum_{i=1}^n x_i^k \) denotes the \(k\)-th power sum symmetric polynomial.
\end{proposition}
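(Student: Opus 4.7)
The plan is to start from the stronger statement already available in characteristic zero, namely that $p_1,\dots,p_n$ are algebraically independent generators of $\K[x_1,\dots,x_n]^{\mathfrak{S}_n}$, and then pin down exactly which of the $p_k$'s can appear when the total degree of $f$ is at most $n$. The existence and uniqueness of a representation $f = F(p_1,\dots,p_n)$ with $F\in\K[y_1,\dots,y_n]$ is therefore guaranteed; the only substantive claim is that $F$ does not involve the variables $y_{d+1},\dots,y_n$.

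\textbf{Key step: weight grading.} First I would introduce the weight grading on $\K[y_1,\dots,y_n]$ that assigns weight $k$ to $y_k$, so that a monomial $y_1^{a_1}\cdots y_n^{a_n}$ has weight $\sum_{k=1}^{n} k\,a_k$. The point is that substituting $y_k \mapsto p_k(x_1,\dots,x_n)$ sends a monomial of weight $w$ to a homogeneous polynomial of total degree $w$ in the $x_i$, since each $p_k$ is homogeneous of degree $k$. Consequently, if one decomposes $F = \sum_{w\ge 0} F_w$ into its weighted-homogeneous components, then $f = \sum_w F_w(p_1,\dots,p_n)$ is exactly the decomposition of $f$ into its standard homogeneous components of total degree $w$.

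\textbf{Degree comparison.} Since $f$ has total degree at most $d$, all components $F_w$ with $w>d$ must vanish when evaluated at $(p_1,\dots,p_n)$. By algebraic independence of $p_1,\dots,p_n$ (which holds over a field of characteristic zero), this forces $F_w = 0$ for every $w>d$. Hence $F = \sum_{w\le d} F_w$ is supported only in weight at most $d$. Any monomial $y_1^{a_1}\cdots y_n^{a_n}$ appearing in such an $F_w$ satisfies $\sum k\, a_k \le d$, and therefore $a_k = 0$ whenever $k>d$. This means $F\in\K[y_1,\dots,y_d]$, as required.

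\textbf{Uniqueness and expected obstacle.} Uniqueness of the representation in $\K[y_1,\dots,y_d]$ is immediate from the algebraic independence of the larger tuple $p_1,\dots,p_n$, restricted to the subset $p_1,\dots,p_d$. The only subtle point, and the one I would treat carefully, is the use of characteristic zero: it enters precisely through the algebraic independence of the power sums (Newton's identities make $e_1,\dots,e_n$ expressible in $p_1,\dots,p_n$ only after dividing by integers up to $n!$). Once that hypothesis is invoked, the weighted-grading argument above is essentially mechanical; I do not expect any deeper obstacle.
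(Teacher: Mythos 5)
Your proof is correct. The paper itself does not give a proof of this proposition; it is stated as a citation to Macdonald's book, so there is no in-paper argument to compare against. Your weight-grading argument is the standard and clean way to establish the result: you correctly observe that the substitution $y_k \mapsto p_k$ converts weight to degree, so the weighted-homogeneous decomposition of $F$ matches the homogeneous decomposition of $f$, and algebraic independence of $p_1,\dots,p_n$ (valid in characteristic zero) kills all components of weight exceeding $d$. One small point worth making explicit: the hypothesis $d\le n$ enters not only through existence (as you note, a weight-$\le d$ monomial cannot involve $y_k$ for $k>d$) but also through uniqueness, since $p_1,\dots,p_d$ is algebraically independent precisely because it is a \emph{subset} of the independent tuple $p_1,\dots,p_n$; for $d>n$ the $p_k$ with $k>n$ are algebraic over $\K[p_1,\dots,p_n]$ and uniqueness would fail. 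You allude to this but do not name $d\le n$ as the operative hypothesis; otherwise the argument is complete.
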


\begin{remark} \label{rk-el}
More generally, the same result holds over any field \( \K \), without any assumption on the characteristic, provided that the symmetric polynomial \( f \in \K[x_1, \dots, x_n]^{\mathfrak{S}_n} \) is expressed in terms of any algebraically independent family of symmetric generators, such as the elementary symmetric polynomials \( e_1, \dots, e_n \), or the complete homogeneous symmetric polynomials \( h_1, \dots, h_n \), with degree sequence \( \{1, \dots, n\} \). We refer the reader to \cite[Section 4]{vu2025} for degree bounds on \( F \) in the general setting where the generators are arbitrary algebraically independent   polynomials \( g_i \)'s.
\end{remark}

\subsubsection{Block Symmetric Polynomials.} \label{sec:block}
A block symmetric polynomial is a polynomial that is symmetric within disjoint subsets (blocks) of variables, but not necessarily symmetric across blocks.

Formally, let \(r\) be a positive integer. For \(1 \le k \le r\), let \(\X_k = (x_{k, 1}, \dots, x_{k, \ell_k})\) be a set of \(\ell_k\) indeterminates. The group \(\S_{\ell_1} \times \cdots \times \S_{\ell_r}\) acts naturally on the polynomial ring \(\K[\X_1, \dots, \X_r]\). We can extend the classical Fundamental Theorem of Symmetric Polynomials (FTSP) in the following way: any \(\S_{\ell_1} \times \cdots \times \S_{\ell_r}\)-invariant polynomial \(f \in \K[\X_1, \dots, \X_r]\) can be expressed uniquely as a polynomial in \(\left(e_{k,1}(\X_k), \dots, e_{k,\ell_k}(\X_k)\right)_{1 \le k \le r}\), where \(e_{k,j}(\X_k)\) denotes the elementary symmetric polynomial of degree \(j\) in the variables of \(\X_k\).

In other words, there exists a unique polynomial \(F \in \K[\bm Y_1, \dots, \bm Y_r]\), where \(\bm Y_k = (y_{k,1}, \dots, y_{k,\ell_k})\), such that
\[
f = F(e_{1, 1}(\X_1), \dots, e_{1, \ell_1}(\X_1), \dots, e_{r, 1}(\X_r), \dots, e_{r, \ell_r}(\X_r)).
\]

\begin{example}
The polynomial
\[
f(x, y, z, a, b) = (x + y + z)^3 + (a^2 + b^2)
\]
is symmetric in each block separately (i.e., symmetric in \(x, y, z\) and in \(a, b\)), but not necessarily symmetric across the two blocks. For this polynomial \(f\), we have
\[
F = y_{1,1}^3 + y_{2,1}^2 - 2y_{2,2}.
\]
\end{example}

Of course, the ring of \(\S_{\ell_1} \times \cdots \times \S_{\ell_r}\)-invariant polynomials is also generated by other standard (block) symmetric polynomial bases such as the complete homogeneous symmetric, power sum symmetric polynomials, and others.

\subsubsection{Orbit Spaces.}In many settings, especially those involving symmetry, it is natural to study a space through the lens of group actions. Given a group \( G \) acting on a space \( X \), the \emph{orbit} of a point \( x \in X \) is the set \[ G \cdot x = \{ g \cdot x \mid g \in G \}, \] consisting of all points obtained by acting on \( x \) with elements of \( G \). The collection of all such orbits forms the \emph{orbit space}, denoted \( X/G \), which captures the geometry of \( X \) modulo the symmetries induced by \( G \). In this quotient space, two points in \( X \) are identified if they lie in the same orbit. Orbit spaces arise naturally in algebraic geometry, differential geometry, and invariant theory, and they play a central role in understanding moduli spaces and classification problems. 

Compositions and partitions of intergers play a central role in understanding and encoding the orbit spaces, especially in more algebraic, combinatorial, and representation theoretic contexts. Partitions classify types of orbits (invariant under permutation), whereas compositions classify positions or arrangements before taking the group action into account.

\begin{definition}[Compositions and Partitions of Integers]
\label{def:composition-order}
A sequence of positive integers $\lambda = (\lambda_1, \ldots, \lambda_\ell)$ with $|\lambda| := \sum_{i=1}^{\ell} \lambda_i = n$ is called a \emph{composition of $n$ into $\ell$ parts}, and we call $\ell$ the \emph{length} of $\lambda$.

We denote by $\Comp(n)$ the set of compositions of $n$, and by $\Comp(n,\ell)$ the set of compositions of $n$ of length $\ell$. For every $d$, we define the set of \emph{alternate odd} compositions $\CompMax(n,d)$ as
\[
\left\{ \lambda = (\lambda_1, \ldots, \lambda_d) \in \Comp(n) \; \middle| \;
\lambda_{2i+1} = 1, \; 0 \leq i < d/2 \right\}.
\]

A composition $\lambda = (\lambda_1, \ldots, \lambda_\ell)$ is called a \emph{partition} of $n$ if $\lambda_1 \le \cdots \le \lambda_\ell$. In this case, we may also denote the partition $\lambda$ by $\lambda = (n_1^{\ell_1}\, \dots\, n_r^{\ell_r})$, where
\[
n = n_1 \ell_1 + \cdots + n_r\, \ell_r \quad \text{and} \quad \ell = \ell_1 + \cdots + \ell_r.
\]
\end{definition}

\begin{example}
    Consider $n = 4$. There exist eight compositions of $4$, namely:
    \[
    (4),\; (3,1),\; (1,3),\; (2,2),\; (2,1,1),\; (1,2,1),\; (1,1,2),\; (1,1,1,1).
    \]
    Among these, the five partitions are:
    \begin{align*}
        (4) = (4^1), 
        (3,1) = (3^1\, 1^1),
        (2,2) = (2^2), 
        (2,1,1) = (2^1\, 1^2),
        (1,1,1,1) = (1^4).
    \end{align*}
    For the compositions of length $3$, 
    \(
    \Comp(4,3) = \{(2,1,1), (1,2,1), (1,1,2)\}.
    \)
    For $d = 3$, the set of alternate odd compositions is
    \(
    \CompMax(4,3) = \{(1,2,1)\}.
    \)
\end{example}
\begin{definition}[Weyl Chambers] \label{def:Weyl}
We denote by $\W_c$ the cone defined by the inequalities
\[
x_1 \leq x_2 \leq \cdots \leq x_n,
\]
called the \emph{canonical Weyl chamber} for the action of the symmetric group $\S_n$. This region is a fundamental domain for the $\S_n$-action on $\R^n$ by coordinate permutation. The \emph{walls} of $\W_c$ consist of those points $\a = (a_1, \ldots, a_n) \in \W_c$ where the inequalities are not strict at some positions, i.e., $a_i = a_{i+1}$ for some $i \in \{1, \ldots, n-1\}$.

For $n \in \N$ and a composition $\lambda = (\lambda_1, \ldots, \lambda_\ell) \in \Comp(n)$, we define the subset $\W_c^{\lambda} \subseteq \W_c$ by
\[
x_1 = \cdots = x_{\lambda_1} \leq x_{\lambda_1+1} = \cdots = x_{\lambda_1+\lambda_2}
\leq \cdots \leq x_{\lambda_1+\cdots+\lambda_{\ell-1}+1} = \cdots = x_n.
\]
For every $\lambda \in \Comp(n,d)$, the set $\W_c^\lambda$ defines a $d$-dimensional face of the cone $\W_c$, and every face arises in this way from a composition.
We denote by $L_\lambda$ the linear span of $\W_c^\lambda$. Then
\[
\dim L_\lambda = \dim \W_c^\lambda = \length(\lambda).
\]
\end{definition}
\begin{example}
    For \(n = 3\), the canonical Weyl chamber \(\W_c \subset \R^3\) is stratified as follows:
    \begin{itemize}
        \item \(W_c^{(1,1,1)}\) consists of all points \((x_1, x_2, x_3) \in \R^3\) such that \(x_1 < x_2 < x_3\).
        \item \(W_c^{(2,1)}\) consists of all points \((x_1, x_2, x_3) \in \R^3\) such that \(x_1 = x_2 < x_3\).
        \item \(W_c^{(3)}\) consists of all points \((x_1, x_2, x_3) \in \R^3\) such that \(x_1 = x_2 = x_3\).
    \end{itemize}
    Each of these strata corresponds to a face of the canonical Weyl chamber, with lower dimensions as more equalities among the coordinates are imposed.
\end{example}

Given a general point \( \a \in \mathbb{R}^n \), there is a unique element \( \a' \in \mathcal{W}_c \) in the \(\S_n \)-orbit of \( \a \). This \( \a' \) is obtained by ordering the coordinates of \( \a \) in increasing order. Algorithmically, this can be done using the classical bubble sort algorithm. By ordering the coordinates, we have thus a unique representative of each orbit, and the Weyl chamber can thus be seen as a model of the orbit space of $\S_n$. 

Moreover, bubble sort can also be used to find the unique permutation \( \sigma \in \S_n \) that transforms \( \a \) into \( \a' \). Since \( \sigma \in \S_n \), it can be expressed as a product of pairwise adjacent transpositions \( s_i = (i, i+1) \) for \( i \in \{1, \ldots, n-1\} \). We will use the following variant of bubble sort to obtain the minimal set of adjacent transpositions necessary to describe \( \sigma \).

\begin{algorithm}
\caption{\textsc{Minimal\_Adjacent\_Transpositions}}
\label{algo:bubble}
\begin{algorithmic}[1]
\Require \( \a = (a_1, \dots, a_n) \in \mathbb{R}^n \)
\Ensure A minimal sequence \( T \) of adjacent transpositions sorting \( \a \), and the sorted vector \( \a' \)

\State Initialize an empty list \( T \leftarrow [] \)
\For{\( i = 1 \) to \( n-1 \)}
    \For{\( j = 1 \) to \( n - i \)}
        \If{\( a_j > a_{j+1} \)}
            \State Swap \( a_j \leftrightarrow a_{j+1} \)
            \State Append transposition \( (j, j+1) \) to \( T \)
        \EndIf
    \EndFor
\EndFor
\State \Return \( (T, \a) \)
\end{algorithmic}
\end{algorithm}

\begin{theorem}
The above algorithm correctly computes a minimal decomposition of the permutation \(\sigma\) into adjacent transpositions. Its time complexity is \(O(n^2)\).
\end{theorem}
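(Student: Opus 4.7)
The plan is to prove two things: (a) the algorithm terminates with $\a$ sorted and the recorded sequence $T$ of adjacent transpositions realizes the sorting permutation $\sigma$, and (b) the length of $T$ is minimal among all expressions of $\sigma$ as a product of adjacent transpositions. The complexity bound will then follow by simply counting the loop iterations.

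For correctness of sorting I would use the classical bubble-sort invariant, proved by induction on $i$: after the $i$-th pass of the outer loop, the last $i$ positions of $\a$ contain the $i$ largest entries arranged in increasing order, while the prefix of length $n-i$ holds the remaining entries in some order. The inductive step follows because the inner loop of pass $i$ is a sequence of adjacent compare-and-swaps that moves the maximum among positions $1,\ldots,n-i+1$ into position $n-i+1$. Taking $i=n-1$ shows that $\a$ ends up sorted. Since the algorithm records each transposition $(j,j+1)$ at the very moment it is applied to the current state of $\a$, composing the recorded transpositions in the order they were issued and applying the composition to the original $\a$ reproduces the sorted vector $\a'$; hence this composition equals $\sigma$.

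For minimality I introduce the inversion count $\mathrm{inv}(\a):=\#\{(i,j):i<j,\ a_i>a_j\}$ and show that each swap performed by the algorithm decreases $\mathrm{inv}(\a)$ by exactly one. Swapping the entries at positions $j$ and $j+1$ when $a_j>a_{j+1}$ removes precisely the inversion at this pair; for any third index $k\notin\{j,j+1\}$, the multiset of values at positions $\{k,j,j+1\}$ is preserved, and a short case analysis (depending on whether $k<j$ or $k>j+1$) shows that the number of inversions contributed by pairs involving $k$ is unchanged. Consequently, the total number of recorded swaps equals $\mathrm{inv}(\a)$, which is exactly the number of inversions of $\sigma$. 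It is a classical fact in the combinatorics of Coxeter groups (see e.g. Bj\"orner--Brenti) that the number of inversions of $\sigma\in\S_n$ equals its Coxeter length $\ell(\sigma)$ with respect to the simple generators $s_1,\ldots,s_{n-1}$, and that any expression of $\sigma$ as a product of adjacent transpositions has length at least $\ell(\sigma)$. Therefore the decomposition produced by the algorithm is of minimum possible length.

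The complexity bound is immediate: the outer loop runs $n-1$ times, the inner loop at most $n-1$ times per outer iteration, and each inner iteration performs only a constant number of comparisons, assignments, and list appends, giving a worst-case running time of $O(n^2)$. The main conceptual step in the argument is the inversion-preservation property under adjacent swaps; although elementary, this is the hinge on which the minimality claim turns, and it is what transforms a bubble sort into a proof of optimality.
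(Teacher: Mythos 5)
Your proof is correct and follows essentially the same route as the paper: both rest on the observation that each adjacent swap performed by bubble sort decreases the inversion count by exactly one, together with the classical fact that the inversion count of $\sigma$ equals the minimal length of a decomposition of $\sigma$ into adjacent transpositions. You spell out the bubble-sort invariant and the case analysis behind inversion reduction in more detail than the paper, but the underlying argument is the same.
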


\begin{proof}
Correctness follows from the fact that each adjacent transposition reduces the inversion number of the permutation by exactly one. Since the total number of inversions corresponds to the minimal number of adjacent transpositions needed to sort the permutation, this procedure yields a minimal decomposition. 

The complexity result follows immediately because the algorithm is essentially bubble sort, which is known to have a worst-case time complexity of \(O(n^2)\).
\end{proof}

\begin{remark}
Although there exist sorting algorithms with better time complexity, Bubble sort is chosen here because the goal of Algorithm \ref{algo:bubble} is not just to sort the vector, but to obtain a minimal sequence of \emph{adjacent} transpositions that transforms the original vector into its sorted form. More efficient sorting algorithms often perform swaps of non-adjacent elements, which do not correspond to adjacent transpositions. Thus, the inherent complexity of finding a minimal adjacent transposition decomposition imposes a lower bound of \(O(n^2)\) on the time complexity.
\end{remark}

\begin{definition}[Orders on compositions]
    Let \(n \in \N\) and \(\lambda, \lambda'\) be two compositions of \(n\). Then we denote \(\lambda \prec \lambda'\) if \(\W_c^\lambda \supset \W_c^{\lambda'}\). Equivalently,  $\lambda  \prec  \lambda'$ if $\lambda'$ can be obtained from $\lambda$ by replacing some of the commas in $\lambda$ by $+$ signs. 

    If \(\lambda, \lambda'\) are two compositions of \(n\), then the smallest composition that is bigger than \(\lambda\) and \(\lambda'\) is called the \emph{join} of \(\lambda\) and \(\lambda'\).  The face corresponding to the join of $\lambda$ and $\lambda'$ is $\W_c^\lambda\cap\W_c^{\lambda'}$.
\end{definition}
    \begin{example}
        For  any $\lambda' \in \{(3,1), (1,3), (4)\}$, we have  $(1,2,1) \prec \lambda'$. Furthermore, the join of
$(2,1,1)$ and $(1,1,1,1)$ is $(2,1,1)$. 
    \end{example}
\begin{definition}[Largest Composition] \label{def_largest}
For $\u \in \W_c \cap \R^n$, $\comp(\u)$ denote the largest composition $\lambda = (\lambda_1, \dots, \lambda_\ell)$ of $n$, such that $\u$ can be written as 
    \[ \u = (\underbrace{c_1, \dots, c_1}_{\lambda_1\text{-times}},
    \underbrace{c_2, \dots, c_2}_{\lambda_2\text{-times}}, \dots,
    \underbrace{c_\ell, \dots, c_\ell}_{\lambda_\ell\text{-times}}).\] 
\end{definition}

\begin{definition}[Refinement Orders on Partitions]
    Let \(n, n' \in \N\) and \(\lambda, \lambda'\) be  partitions of \(n\) and \(n'\) respectively. Then  \(\lambda \cup \lambda'\) is the partition of \(n + n'\) whose ordered list is obtained by merging those of \(\lambda\) and \(\lambda'\).  

    Let \(\lambda = (n_1^{\ell_1} \dots n_r^{\ell_r})\) and \(\lambda' = (m_1^{k_1} \dots m_r^{k_r})\) be partitions of the same integer \(n\). We say \(\lambda\) \emph{refines} \(\lambda'\), denoted by \(\lambda \le \lambda'\), if \(\lambda\) is the union of some partitions \((\lambda_{i,j})_{1 \le i \le s, 1 \le j \le k_i}\), where \(\lambda_{i,j}\) is a partition of \(m_i\) for all \(i,j\). 
\end{definition}

A simple way to parameterize $\S_n$-orbits in $\K^n$ is through the use of partitions of $n$. For a partition $\lambda = (n_1^{\ell_1}\, \dots\, n_r^{\ell_r})$ of $n$, we define the set $U_\lambda \subseteq \K^n$ to consist of all vectors $\bm{u}$ that can be written in the form:
\begin{multline} \label{eq:type}
\bm{u} = (\, \underbrace{u_{1,1}, \dots, u_{1,1}}_{n_1},~ 
    \dots,~ \underbrace{u_{\ell_1,1}, \dots, u_{\ell_1,1}}_{n_1},~
    \dots,\\
    \underbrace{u_{1,r}, \dots, u_{1,r}}_{n_r},~
    \dots,~ \underbrace{u_{r, \ell_r}, \dots, u_{r, \ell_r}}_{n_r} \,).
\end{multline}
We denote by $U_\lambda^{\rm strict} \subset U_\lambda$ the subset consisting of those $\bm{u}$ for which all the values $u_{i,j}$ appearing in eq.\eqref{eq:type} are pairwise distinct.

\begin{definition}[Type of a point and of an \(\S_n\)-orbit]
For any point $\bm{u} \in \K^n$, its \emph{type} is the unique partition $\lambda$ of $n$ such that there exists a permutation $\sigma \in \S_n$ with $\sigma(\bm{u}) \in U_\lambda^{\rm strict}$. Since all points in an orbit share the same type, we also define the \emph{type of an orbit} to be the type of any of its points.
\end{definition}

Clearly, all points in \(U_\lambda^{\rm strict}\) have type \(\lambda\). However, this does not hold for the entirety of \(U_\lambda\), which also contains points of type \(\lambda'\) for all \(\lambda' \ge \lambda\) in the dominance order. In fact,
\[
U_\lambda = \bigsqcup_{\lambda' \ge \lambda} U_{\lambda'}^{\rm strict},
\]
the disjoint union of all \(U_{\lambda'}^{\rm strict}\) for partitions \(\lambda' \ge \lambda\).

A point of type $\lambda = (n_1^{\ell_1}\, \dots \, n_r^{\ell_r})$ is stabilized by the subgroup
\[
\S_{n_1}^{\ell_1} \times \cdots \times \S_{n_r}^{\ell_r} \subseteq \S_n.
\]
Hence, the size of any orbit of type \(\lambda\) is
\[
\mu_\lambda := \binom{n}{\underbrace{n_1, \dots, n_1}_{\ell_1}, \dots, \underbrace{n_r, \dots, n_r}_{\ell_r}} = \frac{n!}{n_1!^{\ell_1} \cdots n_r!^{\ell_r}}.
\]
\begin{example}
    For the partitions of $n = 3$, we have 
    \((1^3) < (1^1\, 2^1) < (3^1)\). Moreover,
    \begin{itemize}
        \item \(U_{(1^3)}\) is \(\K^3\) and \(U_{(1^3)}^{\rm strict}\) is the set of all points of pairwise distinct coordinates. 
        \item \(U_{(1^1\, 2^1)}\) is the set of all points that can be written as \(\bm u = (u_{1,1}, u_{2,1}, u_{2,1})\) while  \(U_{(1^3)}^{\rm strict}\) is the subset of \(U_{(1^1\, 2^1)}\) where \(u_{1,1} \ne u_{2,1}\). 
        \item \(U_{(3^1)}= U_{(3^1)}^{\rm strict}\) is the set of points whose coordinates are all equal.
    \end{itemize}
    In addition, \(U_{(1^3)} = U_{(3^1)}^{\rm strict} \bigsqcup  U_{(1^1 \, 2^1)}^{\rm strict}  \bigsqcup U_{(1^3)}^{\rm strict}.  \)
\end{example}
For a partition \(\lambda = (n_1^{\ell_1}\, \dots \, n_r^{\ell_r})\) of length \(\ell\), we define a mapping
\(
E_\lambda : U_\lambda \rightarrow \C^\ell
\)
by
\begin{equation} \label{eq:comp}
E_\lambda \colon \u \text{ as in } \eqref{eq:type} \mapsto \left( e_{i,1}(u_{i,1}, \dots, u_{i,\ell_i}), \dots, e_{i,\ell_i}(u_{i,1}, \dots, u_{i,\ell_i}) \right)_{1 \le i \le r},
\end{equation}
where, for each \(i = 1, \dots, r\) and \(j = 1, \dots, \ell_i\), \(e_{i,j}(u_{i,1}, \dots, u_{i,\ell_i})\) denotes the degree \(j\) elementary symmetric polynomial in \(u_{i,1}, \dots, u_{i,\ell_i}\).

We can interpret this mapping as an orbit compression. Through the application of \(E_\lambda\), one can represent an entire orbit \(\mathcal{O}\) of type \(\lambda\), which has size \(\mu_\lambda\), by a single point:
\[
E_\lambda(\mathcal{O} \cap U_\lambda) = E_\lambda(\mathcal{O} \cap U_\lambda^{\text{strict}}).
\]
Conversely, we can recover an orbit from its image. Given \(\bm \varepsilon = (\varepsilon_{1,1}, \dots, \varepsilon_{r, \ell_r}) \in \C^\ell\), define the Vieta polynomials \(P_1(T), \dots, P_r(T)\) as
\[
P_i(T) = T^{\ell_i} - \varepsilon_{i,1} T^{\ell_i - 1} + \cdots + (-1)^{\ell_i} \varepsilon_{i,\ell_i}, \quad \text{for } 1 \le i \le r.
\]
We can then reconstruct a point \(\bm u\) in the preimage \(E_\lambda^{-1}(\bm \varepsilon)\) by computing the roots \(u_{i,1}, \dots, u_{i,\ell_i}\) of \(P_i(T)\) for each \(i\).
\subsubsection{A Decision Procedure.}
Consider a partition $\lambda = (n_1^{\ell_1}\, \dots \, n_r^{\ell_r})$ of
$n$, and let 
\[\mathscr{R} = ((q,  v_{1,1}, \dots, v_{1, \ell_1}, \dots,
v_{r, 1},\dots, v_{r, \ell_r},\gamma)\]
be a parametrization which encodes a finite set $W_\lambda\subset \C^\ell $. This set lies in the target space of the algebraic map $E_\lambda : U_\lambda \rightarrow \Kbar^\ell$ defined in \eqref{eq:comp}. Let \(V_\lambda\) be the preimage of $W_\lambda$  by $E_\lambda$. In
this subsection we present a procedure called \textsc{Decision}($\mathscr{R}_\lambda$) which takes as input \textsc{$\mathscr{R}_\lambda$}, and
decides whether the set $V_\lambda$ contains real points. 

In order to do this, a straightforward strategy consists in solving the polynomial system to invert the map $E_\lambda$. Because of the group action of $\S_{\ell_1}\times \cdots \times \S_{\ell_r}$, we would then obtain $\ell_1 !\cdots \ell_r! $ points in the preimage of a single point in $W_\lambda$; therefore we would lose the benefit symmetry.

This difficulty can be bypassed by encoding one single point per orbit in the preimage of the points in $W_\lambda$. This can be done via the following steps.

\begin{algorithm}
\caption{\textsc{Decide}}
\begin{algorithmic}[1]
\Require A parametrizations $\scrR_\lambda = (q, q', {v}_{i,j})_{1 \le i \le r, 1 \le j \le \ell_i}$ defining a finite set $W_\lambda$
\Ensure \true \ if all fibers of \(W_\lambda\) by the map \(E_\lambda\) contain  real points, \false \ otherwise

\State Group  variables $\bm y_i = (y_{i,1}, \ldots, y_{i,\ell_i})$ corresponding to symmetric functions $e_{i,j}$
\State Denote by $v_{i,j}$ the parametrizations corresponding to $e_{i,j}$

\For{$i = 1$ to $r$}
  \State Construct the Vieta polynomial:
  \[
    \rho_i(u, T) = q' \cdot u^{\ell_i} - v_{i,1} \cdot u^{\ell_i - 1} + \cdots + (-1)^{\ell_i} v_{i,\ell_i} \in \K[T][u]
  \]
  \State Compute the Sturm-Habicht sequence of $\left(\rho_i, \frac{\partial \rho_i}{\partial u} \right)$ in $\K[T]$ \label{step:th}
  \State Compute Thom encodings of real roots of $q$ using \textsc{Thom\_Encoding}  \label{step:th_2}
 
  \For{each real root $\vartheta$ of $q$}
    \State Evaluate the signed subresultant sequence of $\rho_i$ at $T = \vartheta$
    \State Compute the number of real roots of $\rho_i(\vartheta, u)$ using the Cauchy index via \textsc{Sign\_ThomEncoding} \label{step:th_3}
    \If{the number of real roots is less than $\ell_i$}
      \State \Return \false
    \EndIf
  \EndFor
\EndFor

\State \Return \true
\end{algorithmic}
\end{algorithm}

\begin{theorem} \label{thm:dec}
    Let \(\lambda = (n_1^{\ell_1}\, \dots\, n_r^{\ell_r})\) be a partition of \(n\) of length \(\ell\) and \(\mathscr{R}_\lambda= ((q,  v_{i,j}),\gamma)_{1 \le i \le r, 1 \le j \le \ell_i}\) a zero-dimensitional parametrization of \(W_\lambda \subset \C^\ell\) of size \(a \).   
    The complexity of the \textsc{Decide}(\(\mathscr{R}_\lambda\)) is  \(O((n^4 a + n a^3))\) operations in \(\Q\). The output of the algorithm is yes if \(E^{-1}_\lambda(W_\lambda)\) has a real point and no otherwise, where  $E_\lambda$  is the algebraic map defined in \eqref{eq:comp}. 
\end{theorem}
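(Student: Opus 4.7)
The plan is to prove correctness by a short Vieta computation, then bound the complexity by aggregating the subresultant-based real root counting over the $r$ blocks and the $a$ real roots of $q$.

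For correctness, I would fix a point $\bm\varepsilon=(\varepsilon_{i,j})\in W_\lambda$ and recall that its preimage under $E_\lambda$, restricted to $U_\lambda$, is the single $\S_{\ell_1}\times\cdots\times\S_{\ell_r}$-orbit whose block-$i$ coordinates are, by Vieta's formulas, the $\ell_i$ roots of
\[
P_i(u)=u^{\ell_i}-\varepsilon_{i,1}u^{\ell_i-1}+\cdots+(-1)^{\ell_i}\varepsilon_{i,\ell_i}.
\]
This orbit has a real representative iff each $P_i$ splits over $\R$. Since $\bm\varepsilon=(v_{i,j}(\vartheta)/q'(\vartheta))$ for a real root $\vartheta$ of $q$, multiplying through by $q'(\vartheta)\neq 0$ (which is nonzero because $q$ is square-free) yields $\rho_i(u,\vartheta)=q'(\vartheta)\,P_i(u)$, so the real roots of $\rho_i(u,\vartheta)$ coincide with those of $P_i$. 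Hence the test performed by the algorithm, namely ``$\rho_i(u,\vartheta)$ has $\ell_i$ real roots in $u$ for every real root $\vartheta$ of $q$ and every $i\le r$,'' is equivalent to the condition that each fiber of $W_\lambda$ under $E_\lambda$ contains a real preimage.

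The counting itself is standard: the signed subresultant sequence of $\rho_i$ and $\partial\rho_i/\partial u$ with respect to $u$, viewed in $\K[T][u]$, has principal coefficients in $\K[T]$ whose signs at $T=\vartheta$ determine, by the Cauchy index / Tarski--Sturm theorem, the number of distinct real roots of $\rho_i(u,\vartheta)$. The Thom encodings of the real roots of $q$ are produced by \textsc{Thom\_Encoding}$(q)$ (cost $O(a^4\log a)$), and all the subsequent sign evaluations at those Thom-encoded roots are handled by \textsc{Sign\_ThomEncoding}, whose cost was recalled in Section~\ref{subs_realpoint}.

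For the complexity I would bound the degree in $T$ of each principal subresultant coefficient by $O(\ell_i\,a)=O(na)$, using that the $v_{i,j}$ have degree less than $a$ and the standard subresultant degree formula. A single call to \textsc{Sign\_ThomEncoding} on such a polynomial then costs $O\bigl(a^2(a\log a+na)\bigr)=O(na^3)$; with $O(\ell_i)$ sign queries per block and $r\le n$ blocks, the sign-evaluation phase fits in the $O(na^3)$ term. Computing the chain of subresultants once per block in $\K[T]$, aggregated over the $r\le n$ blocks, fits within the $O(n^4 a)$ term under the classical signed-subresultant complexity bound of \cite[Chapter 8]{BPR06}. Adding these contributions, together with the \textsc{Thom\_Encoding} cost, yields the announced $O(n^4a+na^3)$ bound.

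The main obstacle I expect is the complexity bookkeeping: the correctness part is essentially the Vieta identity $\rho_i(u,\vartheta)=q'(\vartheta)P_i(u)$, but to stay inside the envelope $O(n^4a+na^3)$ one has to argue that each block contributes only a linear (in $\ell_i$) number of independent sign queries on subresultants of degree $O(na)$ in $T$, rather than falling back on the naive per-real-root evaluation which would give an extra factor of $a$.
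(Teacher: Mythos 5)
Your proposal follows the same route as the paper: the Vieta identity $\rho_i(u,\vartheta)=q'(\vartheta)P_i(u)$ reduces the decision to real-root counting of each $\rho_i$ at the Thom-encoded real roots of $q$, and the complexity is obtained by bounding the Sturm--Habicht computation, \textsc{Thom\_Encoding}, and \textsc{Sign\_ThomEncoding} steps with $b=\max_i\deg_u\rho_i\le n$. The paper's own proof only tallies the per-step costs and does not argue correctness at all, so your explicit Vieta argument supplies a step the paper leaves implicit (though note that what you prove is the algorithm's ``all fibers contain real points'' invariant, matching the pseudocode's Ensure clause rather than the existential phrasing in the theorem statement — a discrepancy already present in the paper).
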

\begin{proof}
    Note that \(\deg(q) = a\). Let \(b\) be the maximum of the partial degrees of \(\rho_i\)'s with respect to \(u\).  By the complexity analysis of
 \cite[Algorithm 8.21 - Section 8.3.6]{BPR06}, Step {\bf \ref{step:th}.}  is performed within
 $O\left( b^{4}a \right)$ arithmetic operations in $\K$ using a classical
 evaluation interpolation scheme (there are $b$ polynomials to interpolate, all
 of them being of degree is at mot $2 a b$). 

  Step {\bf \ref{step:th_2}.}  requires $O\left(
   a^{4}\log(a) \right)$ arithmetic operations in $\K$ (see  Section \ref{subs_realpoint} or  the complexity
 analysis of \cite[Algorithm 10.14 - Section 10.4]{BPR06}). Finally, in Step {\bf \ref{step:th_3}.}, we
 evaluate the signs of $b$ polynomials of degree $\leq 2ab$ at the real roots of
 $v$ (of degree $a$) whose Thom encodings were just computed. This is performed
 using $O\left(a^{3}\left( (\log(a) + b) \right) \right)$ arithmetic
 operations in $\K$ following the complexity analysis of \cite[Algorithm 10.15 - Section
 10.4]{BPR06}. The sum of these estimates lies in $O\left(b^{4}a + 
   a^{3}\left( (\log(a) + b) \right) \right)$. 

    Finally, since the degree of \(\rho_i\) with respect to \(u\) equals \(\ell_i\) and \(\ell_i \le n\). This means \(b \le n\). All in all, we deduce that the total cost of the \textsc{Decide} algorithm is \(O(n^4 a + n a^3)\) operation in \(\K\). 
    \end{proof}

\subsubsection{Some Homomorphisms on Polynomial Rings.}
\label{sec_hom}
Let \(\lambda = (\lambda_1, \dots, \lambda_\ell)\) be a composition of \(n\), and let \(f \in \K[x_1, \dots, x_n]\) be a polynomial. We define the polynomial
\[
f^{[\lambda]} := f(\underbrace{x_1, \dots, x_1}_{\lambda_1\text{ times}}, \underbrace{x_2, \dots, x_2}_{\lambda_2\text{ times}}, \dots, \underbrace{x_\ell, \dots, x_\ell}_{\lambda_\ell\text{ times}}).
\]
Note that \(f^{[\lambda]}\) is a polynomial in \(\ell\) variables, and its image coincides with the image of \(f\) when restricted to the subset \(\W_c^\lambda\).

When \(\lambda = (\lambda_1, \dots, \lambda_\ell) = (n_1^{\ell_1}\, \dots \, n_r^{\ell_r})\) is a partition of \(n\), for \(1 \le k \le r\), let \(\X_k = (x_{k,1}, \dots, x_{k,\ell_k})\) be a set of \(\ell_k\) indeterminates as defined in Section~\ref{sec:block}. Then,
\[
f^{[\lambda]} = f\big(
\underbrace{x_{1,1}, \dots, x_{1,1}}_{n_1\text{ times}}, \dots, \underbrace{x_{1,\ell_1}, \dots, x_{1,\ell_1}}_{n_1\text{ times}}, \dots, 
\underbrace{x_{r,1}, \dots, x_{r,1}}_{n_r\text{ times}}, \dots, \underbrace{x_{r,\ell_r}, \dots, x_{r,\ell_r}}_{n_r\text{ times}}
\big).
\]
 The following is straightforward.

\begin{lemma}
Let \(f\) be a symmetric polynomial in \(\K[x_1, \dots, x_n]\), and let \(\lambda\) be a partition of \(n\). Then \(f^{[\lambda]}\) is \(\S_{\ell_1} \times \cdots \times \S_{\ell_r}\)-invariant.
\end{lemma}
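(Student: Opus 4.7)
The plan is to unpack the definition of $f^{[\lambda]}$ and to show invariance under each factor $\S_{\ell_k}$ separately, exploiting the $\S_n$-invariance of $f$ through a suitably lifted block permutation. Since $\S_{\ell_1} \times \cdots \times \S_{\ell_r}$ is the internal direct product of subgroups acting on disjoint sets of variables $\X_1, \dots, \X_r$, it suffices to prove invariance under an arbitrary $\tau \in \S_{\ell_k}$ for each fixed $k$, and then combine.

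First I would fix $k \in \{1, \ldots, r\}$ and $\tau \in \S_{\ell_k}$ acting on $\X_k = (x_{k,1}, \dots, x_{k,\ell_k})$ by $\tau(x_{k,j}) = x_{k,\tau(j)}$ and trivially on the other $\X_i$'s. Recall that in the expansion
\[
f^{[\lambda]} = f\bigl(\underbrace{x_{k,1}, \dots, x_{k,1}}_{n_k}, \dots, \underbrace{x_{k,\ell_k}, \dots, x_{k,\ell_k}}_{n_k}, \dots \bigr),
\]
the variable $x_{k,j}$ is substituted into the block of $n_k$ consecutive positions of $f$ indexed by $B_{k,j} := \{s_{k,j}+1, \ldots, s_{k,j}+n_k\}$, where $s_{k,j}$ is the appropriate offset determined by $\lambda$. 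Applying $\tau$ to $f^{[\lambda]}$ has the effect of substituting $x_{k,\tau(j)}$ into the positions $B_{k,j}$, which is the same as substituting $x_{k,j}$ into the positions $B_{k,\tau^{-1}(j)}$.

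Next I would construct the lifted permutation $\tilde\tau \in \S_n$ which acts on indices by sending the block $B_{k,j}$ onto the block $B_{k,\tau(j)}$ in an order-preserving way (the blocks have identical sizes $n_k$), and acts as the identity outside $\bigsqcup_j B_{k,j}$. With this construction, one checks directly from the definitions that
\[
\tau\bigl(f^{[\lambda]}\bigr) = (\tilde\tau \cdot f)^{[\lambda]}.
\]
Since $f$ is $\S_n$-invariant, $\tilde\tau \cdot f = f$, hence $\tau(f^{[\lambda]}) = f^{[\lambda]}$. This establishes invariance under each $\S_{\ell_k}$.

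The main (and really the only) subtlety is the careful bookkeeping of indices when defining $\tilde\tau$ and verifying the intertwining identity $\tau \circ (\cdot)^{[\lambda]} = (\cdot)^{[\lambda]} \circ \tilde\tau$; once this is done, invariance is immediate. Finally, since the actions of $\S_{\ell_k}$ for different $k$ commute (they act on disjoint variable sets) and each factor preserves $f^{[\lambda]}$, the whole product $\S_{\ell_1} \times \cdots \times \S_{\ell_r}$ preserves $f^{[\lambda]}$, proving the lemma.
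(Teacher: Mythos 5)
Your proof is correct. The paper offers no proof at all for this lemma (it is introduced with ``The following is straightforward''), and your argument — lifting each block permutation $\tau \in \S_{\ell_k}$ to a permutation $\tilde\tau \in \S_n$ that permutes the corresponding size-$n_k$ blocks of positions and invoking the $\S_n$-invariance of $f$ via the intertwining identity $\tau(f^{[\lambda]}) = (\tilde\tau \cdot f)^{[\lambda]}$ — is exactly the standard verification the authors had in mind.
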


Fix a partition \(\lambda = (n_1^{\ell_1}\, \dots \, n_r^{\ell_r})\) of \(n\), and let \(\ell\) be its length. Define
\[
I_{i,j} := \{\sigma_{i,j}+1, \dots, \sigma_{i,j}+n_i\}, \quad 1 \le i \le r, \ 1 \le j \le \ell_i,
\]
with \(\sigma_{i,j} := \sum_{k=1}^{i-1} \ell_k n_k + (j-1)n_i\). The variables \(x_m\) for \(m \in I_{i,j}\) are precisely those that map to \(x_{i,j}\).

Define the matrix \(\bm D \in \Q^{\ell \times n}\), where \(\ell = \ell_1 + \cdots + \ell_r\), with rows indexed by pairs \((i,j)\) and columns by \(m \in \{1, \dots, n\}\). For each \((i,j)\), the entry in row \((i,j)\) and column \(m \in I_{i,j}\) is set to \(1/n_i\); all other entries are zero. That is,
\[
\bm D = \mathrm{diag}(\bm D_1, \dots, \bm D_r), \quad \text{where each } \bm D_i \in \Q^{\ell_i \times n_i\ell_i} \text{ is of the form}
\]
\[
\bm D_i = \left( 
\begin{matrix}
\begin{matrix}\frac{1}{n_i} & \cdots & \frac{1}{n_i} \end{matrix} & {\bf
    0} & \cdots & {\bf 0}\\
{\bf 0} &\begin{matrix}\frac{1}{n_i} & \cdots &
  \frac{1}{n_i} \end{matrix} & \cdots & {\bf 0} \\
\vdots &  &\ddots & \vdots \\
{\bf 0} & {\bf 0} & \cdots & \begin{matrix}\frac{1}{n_i} & \cdots &
  \frac{1}{n_i} \end{matrix} 
\end{matrix} 
\right).  
\]

\begin{example}\label{ex:matZ}
Consider the partition \(\lambda = (2^2 \, 3^1)\) of \(n=7\). Then \(n_1 = 2\), \(\ell_1 = 2\); \(n_2 = 3\), \(\ell_2 = 1\), so \(\ell = 3\). The matrix \(\bm D\) becomes:
\[
\bm D = \begin{pmatrix}
\frac{1}{2} & \frac{1}{2} & 0 & 0 & 0 & 0 & 0 \\
0 & 0 & \frac{1}{2} & \frac{1}{2} & 0 & 0 & 0 \\
0 & 0 & 0 & 0 & \frac{1}{3} & \frac{1}{3} & \frac{1}{3}
\end{pmatrix}.
\]
\end{example}

Let \(\f = (f_1, \dots, f_s)\) be a sequence of polynomials in \(\K[x_1, \dots, x_n]\). Denote by \(\jac(\f)\) the Jacobian matrix of \(\f\) with respect to the variables \(x_1, \dots, x_n\).

\begin{lemma}\label{lemma:decom}
Let \(\f = (f_1, \dots, f_s) \subset \K[x_1, \dots, x_n]\) be a sequence of symmetric polynomials, and let \(\lambda\) be a partition of \(n\). Then:
\[
\jac_{x_1, \dots, x_n}^{[\lambda]}(\f) = \jac_{\bm X_1, \dots, \bm X_r}\big(\f^{[\lambda]}\big) \cdot \bm D.
\]
\end{lemma}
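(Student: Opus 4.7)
The plan is to verify the matrix identity entry by entry. Fix an index $k \in \{1, \dots, s\}$ (the row, corresponding to $f_k$) and an index $m \in \{1, \dots, n\}$ (the column). I would first write out the $(k,m)$-entries of both sides and then show they coincide.

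The left-hand side entry is simply $\left(\partial f_k/\partial x_m\right)^{[\lambda]}$. For the right-hand side, the $(k,m)$-entry of the product $\jac_{\bm X_1, \dots, \bm X_r}(\f^{[\lambda]}) \cdot \bm D$ is
\[
\sum_{(i,j)} \frac{\partial f_k^{[\lambda]}}{\partial x_{i,j}} \, \bm D_{(i,j),m}.
\]
By the block-diagonal structure of $\bm D$, the entry $\bm D_{(i,j),m}$ equals $1/n_i$ precisely when $m \in I_{i,j}$ and vanishes otherwise. So exactly one $(i,j)$ contributes, and the entry reduces to $(1/n_i)\,\partial f_k^{[\lambda]}/\partial x_{i,j}$ where $(i,j)$ is the unique pair with $m \in I_{i,j}$.

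Next I would apply the chain rule to $f_k^{[\lambda]}$ viewed as $f_k$ composed with the substitution $x_{m'} \mapsto x_{i',j'}$ for $m' \in I_{i',j'}$. Since all variables indexed by $m' \in I_{i,j}$ are identified with $x_{i,j}$, this gives
\[
\frac{\partial f_k^{[\lambda]}}{\partial x_{i,j}} = \sum_{m' \in I_{i,j}} \left(\frac{\partial f_k}{\partial x_{m'}}\right)^{[\lambda]}.
\]
The heart of the argument, and the only nontrivial step, is now to exploit the symmetry of $f_k$ to show that each summand equals $(\partial f_k/\partial x_m)^{[\lambda]}$. I would argue as follows: for any $m' \in I_{i,j}$, let $\tau_{m,m'} \in \S_n$ be the transposition swapping $m$ and $m'$. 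Since $f_k$ is $\S_n$-invariant, it is in particular $\tau_{m,m'}$-invariant, and differentiating the identity $f_k = \tau_{m,m'}(f_k)$ with respect to $x_m$ yields $\partial f_k/\partial x_m = \tau_{m,m'}(\partial f_k/\partial x_{m'})$. Applying the substitution $[\lambda]$ to both sides, and noting that $\tau_{m,m'}$ acts trivially after $[\lambda]$ because $x_m$ and $x_{m'}$ are both sent to $x_{i,j}$, we conclude
\[
\left(\frac{\partial f_k}{\partial x_m}\right)^{[\lambda]} = \left(\frac{\partial f_k}{\partial x_{m'}}\right)^{[\lambda]}.
\]

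Finally I would combine these ingredients: the sum has $n_i$ identical terms, each equal to $(\partial f_k/\partial x_m)^{[\lambda]}$, so multiplying by the $1/n_i$ from $\bm D$ gives exactly the left-hand side. The main obstacle, conceptually, is the symmetry identification in the third step, but once one observes that the substitution $[\lambda]$ merges the orbit of $m$ under the block stabilizer into a single variable, the equality becomes transparent. No genuine computation is required; the content is purely in the interplay between the chain rule, the structure of $\bm D$, and the $\S_n$-invariance of $\f$.
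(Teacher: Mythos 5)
Your proof is correct and follows the same approach as the paper: expand $\partial f_k^{[\lambda]}/\partial x_{i,j}$ by the chain rule over $m'\in I_{i,j}$, use $\S_n$-invariance to show all $n_i$ summands agree, and read off the identity entrywise against the block structure of $\bm D$. The one place you go further than the paper is the third step: the paper simply asserts that $(\partial f/\partial x_m)^{[\lambda]} = (\partial f/\partial x_{m'})^{[\lambda]}$ for $m,m'\in I_{i,j}$ when $f$ is symmetric, whereas you justify it by differentiating the invariance identity $f = \tau_{m,m'}(f)$ and observing that the substitution $[\lambda]$ kills the residual action of $\tau_{m,m'}$; that added justification is correct and welcome.
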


\begin{proof}
For any \(f \in \K[x_1, \dots, x_n]\), \(f^{[\lambda]}\) is obtained by evaluating \(f\) at \(x_m = x_{i,j}\) for \(m \in I_{i,j}\). By the chain rule,
\[
\frac{\partial f^{[\lambda]}}{\partial x_{i,j}} = \sum_{m \in I_{i,j}} \left( \frac{\partial f}{\partial x_m} \right)^{[\lambda]}.
\]
If \(f\) is symmetric, then \(\left( \frac{\partial f}{\partial x_m} \right)^{[\lambda]} = \left( \frac{\partial f}{\partial x_{m'}} \right)^{[\lambda]}\) for all \(m, m' \in I_{i,j}\). Hence, for all \(m \in I_{i,j}\),
\[
\left( \frac{\partial f}{\partial x_m} \right)^{[\lambda]} = \frac{1}{n_i} \frac{\partial f^{[\lambda]}}{\partial x_{i,j}}.
\]
This generalizes to the Jacobian matrix for the sequence \(\f\).
\end{proof}

\begin{lemma} \label{lemma:extend_slice}
Let \(\f = (f_1, \dots, f_s)\) be a sequence of symmetric polynomials such that \(\jac(\f)\) has rank \(s\) at every point of \(V(\f)\). Then \(\jac(\f^{[\lambda]})\) also has rank \(s\) at every point of \(V(\f^{[\lambda]})\).
\end{lemma}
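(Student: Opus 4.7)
The plan is to deduce this directly from Lemma \ref{lemma:decom}, which already factors the ``folded'' Jacobian as a product. I expect the argument to be short: essentially a lifting observation plus the standard inequality $\rank(AB)\le\rank(A)$.

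First, I would observe that every point $\bm x \in V(\f^{[\lambda]}) \subset \C^\ell$ has a canonical lift $\bm x' \in \C^n$, defined by $x'_m = x_{i,j}$ for all $m \in I_{i,j}$ (in the notation preceding Lemma \ref{lemma:decom}), and this lift automatically lies in $V(\f)$. Indeed, by the very definition of the substitution $(\cdot)^{[\lambda]}$, one has
\[
f_k(\bm x') \;=\; f_k^{[\lambda]}(\bm x) \;=\; 0 \qquad \text{for } k=1,\dots,s.
\]
Consequently the hypothesis yields $\rank \jac(\f)|_{\bm x'} = s$.

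Next, I would evaluate the identity of Lemma \ref{lemma:decom} at $\bm x$. The left-hand side $\jac_{x_1,\dots,x_n}^{[\lambda]}(\f)$ is obtained by first substituting $x_m = x_{i,j}$ for $m \in I_{i,j}$ inside the Jacobian and then evaluating at $\bm x$, which is the same as evaluating $\jac(\f)$ at the lifted point $\bm x'$. Hence
\[
\jac_{\bm X_1,\dots,\bm X_r}(\f^{[\lambda]})\big|_{\bm x} \cdot \bm D \;=\; \jac(\f)\big|_{\bm x'},
\]
and the right-hand side has rank $s$ by the previous paragraph.

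Finally, from the standard inequality $\rank(AB) \le \min(\rank A, \rank B)$ applied to the above product, I get
\[
s \;=\; \rank\!\bigl(\jac(\f^{[\lambda]})|_{\bm x} \cdot \bm D\bigr) \;\le\; \rank \jac(\f^{[\lambda]})|_{\bm x}.
\]
Since $\jac(\f^{[\lambda]})|_{\bm x}$ has only $s$ rows, its rank is also bounded above by $s$, so equality holds. There is no real obstacle here; the only point worth a sanity check is that one does \emph{not} need $\bm D$ to have full column rank, since only the direction $\rank(AB)\le\rank(A)$ is used. The argument also tacitly confirms $s \le \ell$ whenever $V(\f^{[\lambda]})$ is non-empty, so the conclusion is vacuous in the degenerate case $s > \ell$.
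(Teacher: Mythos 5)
Your proof is correct and follows essentially the same route as the paper: lift the point of $V(\f^{[\lambda]})$ to a point of $V(\f)$ by repeating coordinates, apply the factorization of Lemma~\ref{lemma:decom}, and conclude that the folded Jacobian has full row rank $s$. The paper phrases the last step as triviality of the left kernel rather than via $\rank(AB)\le\rank(A)$, but these are the same observation.
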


\begin{proof}
Let \(\bm \alpha = (\alpha_{1,1}, \dots, \alpha_{1, \ell_1}, \dots, \alpha_{r, 1}, \dots, \alpha_{r, \ell_r}) \in \Kbar^\ell\) be a zero of \(\f^{[\lambda]}\). Construct the point \(\bm \varepsilon \in \Kbar^n\) as
\[
\bm \varepsilon = (\underbrace{\alpha_{1,1}, \dots, \alpha_{1,1}}_{n_1}, \dots, \underbrace{\alpha_{1, \ell_1}, \dots, \alpha_{1, \ell_1}}_{n_1}, \dots, \underbrace{\alpha_{r, 1}, \dots, \alpha_{r, 1}}_{n_r}, \dots, \underbrace{\alpha_{r, \ell_r}, \dots, \alpha_{r, \ell_r}}_{n_r}).
\]
Then \(\bm \varepsilon \in V(\f)\), and for any \(g \in \K[x_1, \dots, x_n]\), we have \(g^{[\lambda]}(\bm \alpha) = g(\bm \varepsilon)\). In particular,
\[
\jac^{[\lambda]}(\f)(\bm \alpha) = \jac(\f)(\bm \varepsilon).
\]
By Lemma~\ref{lemma:decom}, we also have:
\[
\jac(\f)(\bm \varepsilon) = \jac_{\bm X_1, \dots, \bm X_r}(\f^{[\lambda]})(\bm \alpha) \cdot \bm D.
\]
Since \(\jac(\f)(\bm \varepsilon)\) has rank \(s\), the left kernel of \(\jac_{\bm X_1, \dots, \bm X_r}(\f^{[\lambda]})(\bm \alpha)\) must also be trivial.
\end{proof}

\begin{lemma}\label{lemma:transferreg_ele}
Let \(\bm g = (g_1, \dots, g_s) \subset \K[\X_1, \dots, \X_r]\) be \(\S_{\ell_1} \times \cdots \times \S_{\ell_r}\)-invariant, and assume \(\jac(\bm g)\) has rank \(s\) at every zero of \(\bm g\). Suppose each \(g_i\) can be written as
\[
g_i = G_i(e_{1,1}(\X_1), \dots, e_{1,\ell_1}(\X_1), \dots, e_{r,1}(\X_r), \dots, e_{r,\ell_r}(\X_r)),
\]
for some \(G_i \in \K[\bm Y_1, \dots, \bm Y_r]\). Then the Jacobian matrix \(\jac(\bm G)\) has  rank s at every zero of \(\bm G\). 
\end{lemma}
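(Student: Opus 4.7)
The plan is to reduce the statement to the hypothesis on $\jac(\bm g)$ by viewing $\bm g$ as the composition $\bm g = \bm G \circ E$, where $E$ is the block-elementary-symmetric map $E\colon \Kbar^\ell \to \Kbar^\ell$ sending $(\X_1,\dots,\X_r)$ to $\bigl(e_{i,1}(\X_i),\dots,e_{i,\ell_i}(\X_i)\bigr)_{1\le i\le r}$. Here $\ell=\ell_1+\cdots+\ell_r$ is both the total number of variables in $(\X_1,\dots,\X_r)$ and the number of coordinates $\bm Y = (\bm Y_1,\dots,\bm Y_r)$ of the target, so $E$ is a map between spaces of the same dimension, which is the key point that makes the chain rule usable in our favor.

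First, I would fix an arbitrary $\bm\eta\in\Kbar^\ell$ with $\bm G(\bm\eta)=0$ and construct a preimage $\bm\alpha\in E^{-1}(\bm\eta)$. This is done exactly as in the orbit-reconstruction discussion preceding the lemma: writing $\bm\eta = (\eta_{i,1},\dots,\eta_{i,\ell_i})_{1\le i\le r}$, one forms the Vieta polynomials
\[
P_i(T) \;=\; T^{\ell_i} - \eta_{i,1} T^{\ell_i-1} + \cdots + (-1)^{\ell_i}\eta_{i,\ell_i}, \qquad 1\le i\le r,
\]
and takes $\alpha_{i,1},\dots,\alpha_{i,\ell_i}$ to be the roots of $P_i$ in $\Kbar$. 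By Vieta's formulas (i.e.\ equation~\eqref{eq:vieta}) the tuple $\bm\alpha = (\alpha_{i,j})$ satisfies $E(\bm\alpha)=\bm\eta$, and consequently
\[
\bm g(\bm\alpha) \;=\; \bm G\bigl(E(\bm\alpha)\bigr) \;=\; \bm G(\bm\eta) \;=\; 0,
\]
so $\bm\alpha$ is a zero of $\bm g$.

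Next I would apply the chain rule to the identity $\bm g = \bm G \circ E$ at the point $\bm\alpha$, obtaining
\[
\jac(\bm g)(\bm\alpha) \;=\; \jac(\bm G)(\bm\eta)\cdot \jac(E)(\bm\alpha).
\]
By hypothesis the left-hand side has rank $s$, and since the rank of a product is bounded above by the rank of each factor, it follows that $\jac(\bm G)(\bm\eta)$ has rank at least $s$. But $\bm G = (G_1,\dots,G_s)$ has only $s$ rows in its Jacobian, so its rank is exactly $s$ at $\bm\eta$. Since $\bm\eta$ was an arbitrary zero of $\bm G$, this proves the claim.

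I do not anticipate a real obstacle here: the working over the algebraic closure $\Kbar$ is essential to guarantee that every $\bm\eta$ admits a preimage through the Vieta step, and the rest is a one-line application of the chain rule combined with the elementary rank inequality $\rank(BC)\le \min\{\rank(B),\rank(C)\}$. The only point requiring minor care is to observe that $E$ is indeed a map from $\Kbar^\ell$ to $\Kbar^\ell$, so that the factorization of the $s\times\ell$ matrix $\jac(\bm g)(\bm\alpha)$ as an $s\times\ell$ matrix times an $\ell\times\ell$ matrix makes dimensional sense.
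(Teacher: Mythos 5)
Your proof is correct and follows essentially the same route as the paper: both factor $\jac(\bm g)$ via the chain rule through the block-elementary-symmetric map, pick a preimage of a zero $\bm\eta$ of $\bm G$ in $\Kbar^\ell$ (the paper just asserts $\bm\varepsilon\in\bm E^{-1}(\bm\eta)$; you make this explicit with Vieta polynomials), and then invoke the elementary rank inequality $\rank(BC)\le\rank(B)$. The only cosmetic difference is that you argue directly while the paper phrases it as a contradiction.
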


\begin{proof}
The Jacobian \(\jac(\bm g)\) factors as:
\begin{equation}
    \label{eq:vander}
    \jac(\bm g) = \jac(\bm G)(\bm E) \cdot \bm V, \quad \text{where } \bm V = \mathrm{diag}(V_1, \dots, V_r),
\end{equation}
with \(\bm E = (\bm e_1, \dots, \bm e_r)\) and each \(V_i\) the Jacobian matrix of the elementary symmetric polynomials in \(\X_i\). 
If \(\bm \eta \in V(\bm G)\), and \(\bm \varepsilon \in \bm E^{-1}(\bm \eta)\), then rank deficiency of \(\jac(\bm G)\) at \(\bm \eta\) implies that \(\jac(\bm g)(\bm \varepsilon)\) is also rank-deficient, contradicting the assumption.
\end{proof}

Similarly, instead of using the Jacobian matrix $V_i$ of the elementary symmetric polynomials  as
in \eqref{eq:vander}, we can use $V_i$ as the Jacobian matrix of power sums in $\X_i$. This gives a similar result
but for the polynomials in the power sums.

\begin{lemma} \label{lemma:transferreg}
   Assume \((g_1, \dots, g_s) \subset \K[\X_1, \dots, \X_k]\) is \(\S_{\ell_1} \times \cdots \times \S_{\ell_r}\)-invariant and \(\jac(\bm g)\) has  rank s at each of its zeros. Let \(G_i'\) be the unique expression of \(g_i\) in terms of power sums. Then the Jacobian matrix of \(G_1', \dots, G_s'\) also has rank \(s\) at every point in its vanishing set.
\end{lemma}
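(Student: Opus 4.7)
The strategy is to imitate the proof of Lemma~\ref{lemma:transferreg_ele} verbatim, replacing the elementary symmetric polynomials by the power sums. The only substantive change is that the block diagonal matrix of derivatives $\bm V = \mathrm{diag}(V_1,\dots,V_r)$ is no longer a (scaled) Vandermonde matrix but has entries $j\,x_{i,k}^{j-1}$; however, the argument never needs the invertibility of this matrix, only the factorization through it.

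First, since each $g_i$ is $\S_{\ell_1}\times\cdots\times\S_{\ell_r}$-invariant, the block version of the Fundamental Theorem of Symmetric Polynomials applied to power sums (valid here because we are in characteristic zero, as noted after Example~\ref{ex_10}) gives a unique $G_i'\in\K[\bm Y_1,\dots,\bm Y_r]$ such that
\[
g_i \;=\; G_i'\bigl(p_{1,1}(\X_1),\dots,p_{1,\ell_1}(\X_1),\dots,p_{r,1}(\X_r),\dots,p_{r,\ell_r}(\X_r)\bigr).
\]
Let $\bm P=(\bm p_1,\dots,\bm p_r)$ denote the tuple of these block power sums. The chain rule yields the factorization
\[
\jac(\bm g) \;=\; \jac(\bm G')(\bm P)\cdot \bm V',
\qquad \bm V' \;=\; \mathrm{diag}(V_1',\dots,V_r'),
\]
where the $(j,k)$-entry of $V_i'$ is $j\,x_{i,k}^{j-1}$, i.e.\ $\partial p_{i,j}/\partial x_{i,k}$, exactly as in \eqref{eq:vander}.

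Next, I argue by contradiction. Suppose $\bm\eta\in V(\bm G')$ is a point at which $\jac(\bm G')$ has rank $<s$. Then there exists a nonzero row vector $v\in\Kbar^s$ with $v^{T}\jac(\bm G')(\bm\eta)=0$. Over the algebraic closure, the power sum map $\bm P$ is surjective (given $\bm\eta$, Newton's identities recover the elementary symmetric values in each block, whose associated univariate polynomials split), so we can pick $\bm\varepsilon$ with $\bm P(\bm\varepsilon)=\bm\eta$. Then $\bm g(\bm\varepsilon)=\bm G'(\bm\eta)=0$, so $\bm\varepsilon\in V(\bm g)$, and
\[
v^{T}\jac(\bm g)(\bm\varepsilon) \;=\; v^{T}\jac(\bm G')(\bm\eta)\cdot \bm V'(\bm\varepsilon) \;=\; 0,
\]
which contradicts the hypothesis that $\jac(\bm g)$ has rank $s$ at every zero.

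The main obstacle is a conceptual one rather than a computational one: one has to recognize that the rank argument in Lemma~\ref{lemma:transferreg_ele} never used invertibility of the Jacobian of the generators, only the factorization and the surjectivity of the generator map onto the target space. Once this is observed, no property of the Vandermonde form is needed and the same proof transports to the power sum generators. The only hypothesis one must keep track of is that the characteristic is zero, which is what ensures both the uniqueness of $G_i'$ and the surjectivity of $\bm P$ used above.
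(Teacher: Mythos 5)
Your proposal is correct and follows essentially the same route as the paper, which simply observes that the proof of Lemma~\ref{lemma:transferreg_ele} carries over verbatim once the Jacobian of the elementary symmetric polynomials in each block is replaced by the Jacobian of the power sums. Your added remarks — that the factorization argument only uses a left-kernel transfer plus surjectivity of the generator map over $\Kbar$, and that characteristic zero is needed for the power-sum basis — are accurate and make explicit what the paper leaves implicit.
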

\subsubsection{The Vandermonde Maps.} The Vandermonde map is a classical object in algebra and geometry that arises naturally in the study of symmetric functions, hyperbolic polynomials, real algebraic geometry, and invariant theory. 

\begin{definition}
For \( m = (m_1, \ldots, m_n) \in \mathbb{N}^n \), the \emph{weighted power sum} is defined as
\[
p_j^{(m)} := \sum_{i=1}^n m_i x_i^j.
\]
\end{definition}
\noindent Clearly, the standard power sum polynomials correspond to the trivial weight \( m = (1, \ldots, 1) \). Furthermore, given a composition \( \lambda \) of \( n \), the restriction of the \( j \)-th power sum polynomial \( p_j \) to the wall \( W_c^\lambda \), denoted \( p_j^{[\lambda]} \), coincides with the weighted power sum \( p_j^{(\lambda)} \).

\begin{definition}
Let \( d \in \{1, \ldots, n\} \) and \( m \in \mathbb{N}^n \). The \emph{\( d \)-th weighted Vandermonde map} is the function
\[
\nu_{n,d,m} \colon \mathbb{R}^n \to \mathbb{R}^d, \quad \u \mapsto \left(p_1^{(m)}(\u), \ldots, p_d^{(m)}(\u)\right).
\]
When \( m = (1, \ldots, 1) \), we denote the map simply as \( \nu_{n,d} \). When \( d = n \) and \( m = (1, \ldots, 1) \), we refer to it as the (standard) \emph{Vandermonde map}.

For \( \a = (a_1, \ldots, a_d) \in \mathbb{R}^d \), the fiber of the \( d \)-Vandermonde map is the set
\[
V(\a) := \left\{ \u \in \mathbb{R}^n \mid p_1^{(m)}(\u) = a_1, \ldots, p_d^{(m)}(\u) = a_d \right\},
\]
called the \emph{\( m \)-weighted Vandermonde variety} associated to \( \a \). Note that when \( n = d \) and \( m = (1, \ldots, 1) \), every non-empty Vandermonde variety consists of the orbit of a single point \( \u \in \mathbb{R}^n \).
\end{definition}

Note that, in the case when \( m = (1, \dots, 1) \), the fiber is not necessarily the orbit of a point, unless \( d = n \). However, the orbit of a point \( \u \) is always contained in the fiber of \(\nu_{n,d}(\u)\).

\begin{example}
    For \( n = 3 \), \( d = 2 \), and weights \( m = (1,1,1) \), we have
    \[
    \nu_{3,2}(x_1, x_2, x_3) = (p_1(x), p_2(x)) = (x_1 + x_2 + x_3, \, x_1^2 + x_2^2 + x_3^2).
    \]
    The fiber over \((a_1, a_2)\) is
    \[
    V(a_1, a_2) = \{(x_1, x_2, x_3) \in \mathbb{R}^3 \mid x_1 + x_2 + x_3 = a_1, \quad x_1^2 + x_2^2 + x_3^2 = a_2 \}.
    \]
    This is a subset of \(\mathbb{R}^3\) and can be a curve, a point, or empty, depending on whether the system has real solutions.
\end{example}

The significance of the Vandermonde map has been emphasized in the work of Arnold, Givental, and Kostov on hyperbolic polynomials, and has also been further explored in \cite{Blekherman}.

\begin{theorem}[\cite{Blekherman,arnold1986hyperbolic,givental1987moments,kostov1989geometric}]\label{thm:arnold}
The (standard) Vandermonde map \( \nu_{n,n} \) with \( m = (1, \ldots, 1) \) defines a homeomorphism between a Weyl chamber \( \mathcal{W} \) and its image. Moreover, for any composition \( \lambda \), the weighted Vandermonde map \( \nu_{n, \ell(\lambda), \lambda} \) provides a homeomorphism between the associated chamber \( \mathcal{W}_c^\lambda \) and its image. Furthermore, for every \( \a \in \mathbb{R}^d \) and \( m \in \mathbb{Z}^d \), the weighted Vandermonde variety \( V_m(\a) \cap \mathcal{W}_c \) is either contractible or empty.
\end{theorem}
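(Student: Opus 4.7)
The plan is to address the three claims in order, and my main technical worry lies in promoting the Jacobian-based local injectivity to a global statement for the weighted Vandermonde map in the second claim, and in establishing the contractibility statement of the third.

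For the first claim, $\nu_{n,n}$ is continuous, being polynomial. For injectivity on $\W_c$, in characteristic zero Newton's identities provide a bijective correspondence between the first $n$ power sums and the first $n$ elementary symmetric polynomials. By Vieta's formula \eqref{eq:vieta} these in turn determine the monic polynomial $\prod_i(T-u_i)$, and hence the multiset of roots, which has a unique sorted representative in $\W_c$. Continuity of the inverse follows from the classical fact that ordered real roots of a monic polynomial depend continuously on its coefficients, for instance via Rouch\'e's theorem.

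For the second claim, I would parameterize $\W_c^\lambda$ by $(c_1,\ldots,c_\ell) \in \R^\ell$ with $c_1 \leq \cdots \leq c_\ell$, where $\ell = \ell(\lambda)$, so that the map takes the form $\Phi(c) = \bigl(\sum_i \lambda_i c_i^j\bigr)_{j=1}^{\ell}$. A direct calculation yields
\[
\det\!\left(\frac{\partial \Phi_j}{\partial c_i}\right) \;=\; \ell!\, \prod_{i=1}^{\ell} \lambda_i \cdot \prod_{1\le i<k\le \ell}(c_k - c_i),
\]
which is strictly positive on the open chamber, so $\Phi$ is a local diffeomorphism there; the estimate $c_k^2 \leq p_2^{(\lambda)}(c)$ (valid since each $\lambda_i \geq 1$) makes $\Phi$ a proper map on the closed wall. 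For global injectivity on the interior, I would introduce the homotopy
\[
\Phi_t(c) = \Bigl(\sum_i \bigl((1-t)+t\lambda_i\bigr)\, c_i^j\Bigr)_{j=1}^{\ell}, \qquad t \in [0,1],
\]
and consider the auxiliary map $\Psi(t,c) = (t,\Phi_t(c))$. Since $(1-t)+t\lambda_i \geq 1$ throughout, the Jacobian of each $\Phi_t$ remains strictly positive on the open chamber, so $\Psi$ is a proper local diffeomorphism onto its open, connected image in $[0,1]\times\R^\ell$; equivalently, $\Psi$ is a covering map. Its sheet number is constant in $t$, and at $t=0$ it equals $1$ by the first claim applied in $\R^\ell$, so $\Phi_1 = \Phi$ is injective on the open chamber. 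Injectivity extends to the boundary of $\W_c^\lambda$ because each proper face is mapped into a lower-dimensional subset of $\R^\ell$ disjoint from the open image, and $\Phi$ restricted to each face is injective by the same argument applied to the corresponding coarser composition. A continuous bijection that is proper between Hausdorff spaces is automatically a homeomorphism onto its image.

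For the third claim, I would induct on the codimension $n-d$. The base case $d=n$ reduces to the first and second claims, giving a fiber that is empty or a single point, and so trivially contractible. For the inductive step, project $V_m(\a) \cap \W_c$ onto its smallest coordinate $x_1$; the image is a closed interval $[t_{\min},t_{\max}]$, and the fiber over $t$, after the translation $x_i \mapsto x_i - t$, becomes a weighted Vandermonde variety in $n-1$ variables with shifted parameters $a_j - m_1 t^j$ intersected with the Weyl chamber of $\R^{n-1}$. By the inductive hypothesis each such fiber is contractible (or empty at the endpoints). A radial deformation retract along $x_1$ then contracts the entire variety to a single point. The delicate aspect, and the main obstacle I anticipate, is verifying that fibers vary continuously at the boundary of the interval --- where the fiber can degenerate onto a lower-dimensional wall of the Weyl chamber --- well enough to give a genuine strong deformation retract; this is where the Jacobian non-degeneracy from the second claim plays a crucial role.
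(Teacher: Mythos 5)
The paper does not actually prove this theorem; it imports it from Arnol'd, Givental, Kostov and \cite{Blekherman}, so your proposal can only be measured against the literature. Your argument for the first claim is fine. For the second claim, however, the covering-space step has a genuine gap: to conclude that \(\Psi\) is a covering map onto its image you need \(\Psi\) restricted to \([0,1]\times U\) (with \(U\) the \emph{open} chamber \(c_1<\cdots<c_\ell\)) to be proper \emph{onto its image}, and the estimate \(c_k^2\le p_2^{(\lambda)}(c)\) only shows that preimages of bounded sets are bounded. A sequence in \([0,1]\times U\) whose images converge inside \(\Psi([0,1]\times U)\) could still accumulate on a wall \(c_i=c_{i+1}\); ruling this out amounts to showing that the image of the open stratum is disjoint from the images of the boundary strata, i.e.\ that a configuration with \(\ell\) distinct nodes and weights \(w_i\) cannot share its first \(\ell\) weighted power sums with a configuration having fewer nodes and merged weights. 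That disjointness is essentially the injectivity statement itself (a Cauchy--Schwarz argument settles \(\ell=2\), but a naive atom count for the signed measure \(\sum w_i\delta_{c_i}-\sum w_i\delta_{c_i'}\) does not close the case \(\ell\ge 3\)), so without it the sheet-counting argument is circular: sheets could merge into the boundary as \(t\) varies. The same unproved disjointness is invoked again when you extend injectivity to the faces.

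The third claim has a more serious structural problem. Fibering \(V_m(\a)\cap\W_c\) over the smallest coordinate \(x_1=t\) does not produce an object covered by your inductive hypothesis: the fiber is a weighted Vandermonde variety in \((x_2,\ldots,x_n)\) intersected with the Weyl chamber \emph{and} the extra half-space \(x_2\ge t\), and this half-space constraint does not disappear under translation. Moreover, asserting that the image of the projection is a single closed interval presupposes connectedness of \(V_m(\a)\cap\W_c\), which is part of what is being proved, and even granting contractible fibers over an interval, one needs a properness/local-triviality argument (or an explicit deformation retraction compatible with the degenerating fibers) to conclude contractibility of the total space; ``a radial deformation retract along \(x_1\)'' does not supply this, precisely because the fibers change as \(t\) moves and collapse onto lower-dimensional walls at the endpoints. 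The proofs in the cited sources proceed differently: they analyze the critical points of the next power sum \(p_{d+1}\) restricted to the Vandermonde variety inside the chamber (showing there is a unique maximum and no other topology-changing critical points), a strategy closely tied to the critical-point and degree-principle machinery developed elsewhere in this chapter. As it stands, your proposal establishes claim one, sketches a repairable but incomplete route to claim two, and does not prove claim three.
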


\subsection{Fixed Degree Symmetric Polynomials}
\label{sec_prin}

The fundamental theorem of symmetric polynomials (Theorem \ref{thm:funda}) allows us to transfer algebraic computations from the ring \(\R[x_1,\ldots,x_n]\) to the new ring \(\R[e_1,\ldots,e_n]\) whenever the computation is invariant under the action of \(\S_n\). The fact that the elementary symmetric polynomials are algebraically independent directly reduces the number of variables when the degree is fixed (see Remark \ref{rk-el}). Indeed, if \(f \in \R[x_1,\ldots,x_n]\) is symmetric of degree \(d\), then there exists a unique \(F \in \R[e_1,\ldots,e_d]\) such that
\[
  f(x) = F(e_1(x),\ldots,e_d(x)).
\]
This transfer from \(x_1,\ldots,x_n\) to \(e_1,\ldots,e_d\) can thus drastically simplify the computation when the degree is fixed. We will first see how this plays out in the context of sums-of-squares decompositions for symmetric polynomials.

\subsubsection{Symmetric Sums of Squares.}\label{sec:symsos}

As discussed in the previous section, sums of squares (SOS) provide algebraic certificates of non-negativity and form the basis of powerful semidefinite programming (SDP) relaxations for problems in real algebraic geometry. When the polynomials in question exhibit additional structure, such as symmetry, one can exploit this structure to simplify SOS representations and reduce computational complexity. The key techniques here are based on representation theory and, in particular, {\em Schur's Lemma}. We refer to \cite{riener2013exploiting,debus2020reflection,gatermann2004symmetry,blekherman2021symmetric} for a more general description of the techniques and results in this area. For this chapter, the particular case of a polynomial \( f \) that is \emph{symmetric} is of special interest.

As was remarked previously, algebraic calculations up to symmetry can be more efficiently done in the invariant ring. However, the following example highlights a potential problem.

\begin{example}
The polynomial $f=x_1^2-2x_1x_2+x_2^2$ is symmetric. Since we have $f=(x_1-x_2)^2$ we see that $f$ is also a symmetric sum of squares. However, the polynomial $(x_1-x_2)$ is not symmetric. Therefore, even though $f$ possesses a sum of squares decomposition in the polynomial ring $\R[x_1,x_2]$, it does not have a sum of squares decomposition in the ring of symmetric polynomials. 
\end{example}
The above example highlights that in general invariant sums of squares are not squares of invariant polynomials. To overcome this issue, one needs to characterize invariant  sums of squares algebraically as cone in the invariant ring. We illustrate this firstly  in the case \( n = 2 \) for symmetric polynomials. The key here is that  every polynomial \( q \in \R[x_1, x_2] \) can be uniquely decomposed into symmetric and anti-symmetric parts:
\[
  q = s_1 + (x_1 - x_2)s_2,
\]
where \( s_1 \) and \( s_2 \) are symmetric polynomials.

Now, let \( f \in \R[x_1, x_2] \) be a symmetric SOS polynomial:
\[
  f = \sum_i q_i^2 = \sum_i \big( s_{i,1} + (x_1 - x_2)s_{i,2} \big)^2.
\]
Expanding this gives:
\[
  f = \sum_i \Big( s_{i,1}^2 + 2s_{i,1} s_{i,2} (x_1 - x_2) + s_{i,2}^2 (x_1 - x_2)^2 \Big).
\]
The mixed terms \( s_{i,1} s_{i,2} (x_1 - x_2) \) are anti-symmetric and must cancel in the sum because \( f \) is symmetric. This implies that any symmetric SOS polynomial admits a decomposition of the form
\[
  f = f_1 + (x_1 - x_2)^2 f_2,
\]
where \( f_1 \) and \( f_2 \) are sums of squares of symmetric polynomials.

This idea nicely generalizes to all (compact/reductive) groups. Firstly, in any such case the polynomial ring is a finitely generated module over the invariant ring, i.e., there exist finitely many polynomials $b_1,\ldots, b_m$ such that every polynomial $f\in\R[x_1,\ldots, x_n]$  can be represented as \[f=\sum_{i=1}^m s_ib_i,\] where $s_i$ are invariant polynomials.
Relatively to this generating set  we now construct  a matrix polynomial  with entries 
$$H^S_{u,v}:=\mathcal{R}_G(b_u\cdot b_v),\, \text{ where }1\leq u,v\leq m.$$  Using this construction, an invariant polynomial $f$ is a sum of squares if and only if  $f$ can be written as \[f=\text{Trace} (S(x),H^S(x)),\]
where $S(x)$ is a \emph{sum of squares matrix polynomial} of invariant polynomials, i.e., there exists a matrix $L(x)$ whose entries are invariant polynomials, such that $S(x)=L(x)^tL(x)$. By construction this certificate only involved invariant polynomials and thus can be represented entirely in  the invariant ring - resulting in a reduction of computational efforts.

This construction can be further simplified by carefully selecting the generating set in such a way that it represents the so-called \emph{isotypic decomposition} of the polynomial ring, i.e., the partition into irreducible representations. In this case, results from invariant theory, such as Schur's Lemma, can be further applied to reduce the computational complexity. For example, this allows for dramatic simplifications in the case of the symmetric group: It turns out that for homogeneous symmetric polynomials of fixed degree, the size of the semidefinite program needed to decide whether \( f \) is SOS becomes independent of the number of variables \( n \), once \( n \) exceeds a threshold depending on the degree. This result was first proven in~\cite{riener2013exploiting,rie1}.

\begin{theorem}[Riener et al. ~{\cite{riener2013exploiting,rie1}}]
Let \( H_{n,2d}^{\S_n} \) denote the space of homogeneous symmetric polynomials of degree \( 2d \) in \( n \) variables. Then, for all \( n \geq 2d \), the size of the Gram matrix required to determine whether \( f \in H_{n,2d}^{\S_n} \) is a sum of squares depends only on \( d \), not on \( n \).
\end{theorem}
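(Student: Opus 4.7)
The plan is to combine the Gram matrix characterisation of SOS polynomials from Section~\ref{sec:sos} with a symmetry adaptation based on the action of $\S_n$ on the space $V_d := \R[x_1,\ldots,x_n]_{\leq d}$, and then to appeal to a representation-stability phenomenon that takes place precisely when $n \geq 2d$.

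First I would recall that a polynomial $f$ of degree $2d$ is SOS if and only if there exists a PSD matrix $Q$ with $f = v(x)^T Q v(x)$, where $v(x)$ is a basis of $V_d$. Because $f$ is $\S_n$-invariant, a standard averaging argument over the group action lets me replace $Q$ by its symmetrisation; without loss of generality, $Q$ intertwines the natural permutation representation of $\S_n$ on $V_d$. Hence finding an SOS certificate for $f$ reduces to searching for a PSD, $\S_n$-equivariant endomorphism of $V_d$ whose associated quadratic form equals $f$.

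Next I would invoke the isotypic decomposition
\[
V_d \;\cong\; \bigoplus_{\lambda \vdash n} (S^\lambda)^{\oplus m_\lambda(n,d)},
\]
where $S^\lambda$ denotes the Specht module of $\lambda$ and $m_\lambda(n,d)$ is its multiplicity in $V_d$. Schur's Lemma forces any $\S_n$-equivariant symmetric $Q$ to be block diagonal with one PSD block of size $m_\lambda(n,d)\times m_\lambda(n,d)$ per isotypic component. The resulting symmetry-adapted SDP therefore has total size $\sum_{\lambda} m_\lambda(n,d)$, and the remaining task is to control these multiplicities uniformly in~$n$.

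The technical heart, and the step I expect to be the main obstacle, is the stabilisation of the multiplicities $m_\lambda(n,d)$ in the regime $n \geq 2d$. The partitions with $m_\lambda(n,d) \neq 0$ are those of the form $\lambda = (n-k,\mu)$ with $\mu \vdash k$ and $k \leq d$, so they are parametrised by partitions $\mu$ of integers at most $d$, independently of~$n$. Applying the Pieri rule (equivalently Young's rule) to the symmetric powers $\mathrm{Sym}^{k}(\R^n)$ whose direct sum constitutes $V_d$, one verifies that once $n-d \geq d$, i.e.\ $n \geq 2d$, the first row of $\lambda$ is always long enough to accommodate the remaining shape, and the combinatorial formula for $m_{(n-k,\mu)}(n,d)$ depends only on $\mu$ and $d$. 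Granting this stabilisation, both the indexing set of blocks and the individual block sizes become functions of $d$ alone, and the total size of the symmetry-adapted Gram matrix is bounded by a quantity depending only on $d$, as required.
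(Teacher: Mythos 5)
Your proposal is correct and follows essentially the same approach as the paper: both pass from the Gram matrix formulation to the symmetry-adapted setting via an averaging argument, use the isotypic decomposition of the space of degree-$d$ forms and Schur's Lemma to block-diagonalize, and then invoke the stabilization of the irreducible $\S_n$-constituents (partitions of the form $(n-k,\mu)$ with $\mu \vdash k \le d$) once $n \ge 2d$. The paper's exposition phrases the intermediate step in terms of the Hironaka decomposition and a matrix polynomial $H^S$ built from module generators over the invariant ring, but this is an equivalent technical packaging of the same representation-theoretic idea; your direct Pieri/Young-rule argument for why the multiplicities $m_{(n-k,\mu)}(n,d)$ become independent of $n$ is precisely the stabilization phenomenon the paper points to.
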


The above result  relies on the representation theory of the symmetric group \( \S_n \). The key insight is that, for a fixed degree the number of irreducible $S_n$ representations in the space of homogeneous polynomials of a fixed degree stabilizes. On the one hand, their number is fixed and on the other hand their generators are naturally connected. This allows for a \emph{block-diagonalizaton} of the matrix polynomial $H^S$ above which will be of a fixed size for all $n>2d$. 

For example, the following result provides a  representation theorem of symmetric quartics.

\begin{theorem}[Blekherman-Riener \cite{blekherman2012nonnegative}]
Let $f$ be a symmetric quartic polynomial in $n>4$ variables. Then  $f$
is a sum of squares if and only if it can be written in the form
\begin{eqnarray*}
f^{(n)}&=&\alpha_{11}\pi_1^4+2\alpha_{12}\pi_1^2\pi_2+\alpha_{22}\pi_2^2\\
&+&\beta_{11}\left(\pi_1^2\pi_2-\pi_1^4\right)+2\beta_{12}\left(p_{(3,1)}-\pi_1^2\pi_2\right)+\beta_{22}\left(\pi_4-\pi_2^2\right)\\
&+&\gamma\left(\frac{1}{2}
\pi_1^4-\pi_1^2\pi_2+\frac{n^2-3n+3}{2n^2}\pi_2^2+\frac{2n-2}{n^2}
\pi_1\pi_3+\frac{1-n}{2n^2} \pi_4\right),
\end{eqnarray*}
where  $\pi_j=\frac{1}{n}(x_1^j+\ldots x_n^j)$ for $1\leq j\leq 4$  and the parameters $\alpha_{11},\alpha_{12}\,\alpha_{22},\beta_{11},\beta_{12},$ $\beta_{22}$ are chosen 
such that $\gamma\geq 0$ and the matrices $\begin{pmatrix}
\alpha_{11}&\alpha_{12}\\
\alpha_{12}&\alpha_{22}
\end{pmatrix}
$ and $\begin{pmatrix}
\beta_{11}&\beta_{12}\\
\beta_{12}&\beta_{22}
\end{pmatrix}$ are positive semidefinite.
\end{theorem}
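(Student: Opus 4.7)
The plan is to combine the symmetry adapted sum of squares machinery recalled in Section~\ref{sec:symsos} with the explicit isotypic decomposition of the space of degree-2 polynomials under the action of $\S_n$, which stabilizes once $n>4$.

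First, if $f\in\R[x_1,\ldots,x_n]^{\S_n}$ is a symmetric quartic that is SOS, and $f=\sum_i q_i^2$ is any SOS representation with $q_i$ homogeneous of degree $2$, then averaging over the group action shows that we may assume the associated Gram matrix on $V:=\R[x_1,\ldots,x_n]_2$ is $\S_n$-equivariant. Thus I will reduce the question to classifying $\S_n$-equivariant positive semidefinite bilinear forms on $V$. Second, I will decompose $V$ into isotypic components. Starting from $V_1=\R[x_1,\ldots,x_n]_1=\mathrm{triv}\oplus\mathrm{std}$, the identity $S^2(V_1)=S^2(\mathrm{triv})\oplus(\mathrm{triv}\otimes\mathrm{std})\oplus S^2(\mathrm{std})$ together with the classical branching $S^2(\mathrm{std})=\mathrm{triv}\oplus\mathrm{std}\oplus V_{(n-2,2)}$ (valid for $n\geq 4$) yields
\[
V \;\cong\; 2\cdot\mathrm{triv}\ \oplus\ 2\cdot\mathrm{std}\ \oplus\ V_{(n-2,2)}.
\]
In particular the multiplicities stabilize for $n>4$, which is exactly the regime of the theorem.

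Third, I will invoke Schur's lemma: an $\S_n$-equivariant PSD form on $V$ is block-diagonal, with blocks indexed by the three isotypic components and of sizes equal to the corresponding multiplicities. This gives a $2\times 2$ PSD matrix on the trivial isotypic, a $2\times 2$ PSD matrix on the standard isotypic, and a nonnegative scalar on $V_{(n-2,2)}$; these account for precisely the seven free parameters $\alpha_{11},\alpha_{12},\alpha_{22}$, $\beta_{11},\beta_{12},\beta_{22}$, $\gamma$ appearing in the claimed formula. Fourth, I will compute the contribution of each block to the invariant quartic. For the trivial isotypic a natural basis is $\{\pi_1^2,\pi_2\}$, and taking squares and mixed products immediately produces the $\alpha$-term. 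For the two copies of the standard isotypic, one chooses explicit highest-weight vectors (e.g.\ $x_i-\pi_1/n$ lifted by multiplication with $\pi_1$, and $x_i^2-\pi_2/n$); symmetrizing their squares gives, after reducing to the power-sum basis $\{\pi_1^4,\pi_1^2\pi_2,\pi_1\pi_3,\pi_2^2,\pi_4\}$, exactly the $\beta$-term of the statement, with the differences such as $\pi_1^2\pi_2-\pi_1^4$ and $p_{(3,1)}-\pi_1^2\pi_2$ being forced by the projection onto the standard isotypic. For $V_{(n-2,2)}$, I will pick a highest-weight vector such as a suitable symmetrization of $(x_1-x_2)(x_3-x_4)$, form its $\S_n$-orbit sum of squares, and re-expand in the power-sum basis. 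The combinatorial counts of orbit sizes produce precisely the coefficients $(n^2-3n+3)/(2n^2)$, $(2n-2)/n^2$ and $(1-n)/(2n^2)$ appearing in the $\gamma$-term.

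The main obstacle is the last computation: the $V_{(n-2,2)}$ block requires identifying a correct highest-weight vector, computing the orbit sum of its square, and then orthogonally projecting onto the $\S_n$-invariants to re-express the result in power sums; the $n$-dependent rational coefficients in the $\gamma$-term arise from both the orbit enumeration and the subtraction of the $\mathrm{triv}$- and $\mathrm{std}$-isotypic contributions to ensure one lands precisely in the $V_{(n-2,2)}$-component. Once these explicit formulas are in hand, both directions of the equivalence follow: any symmetric SOS $f$ decomposes into its three block contributions, yielding parameters satisfying the stated PSD conditions; conversely, given matrices $(\alpha_{ij})\succeq 0$, $(\beta_{ij})\succeq 0$ and $\gamma\geq 0$, the construction assembles an explicit SOS representation of $f^{(n)}$, completing the proof.
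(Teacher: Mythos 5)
Your proposal is correct and follows essentially the same route as the cited Blekherman--Riener source (and the framework recalled just before the statement in Section~\ref{sec:symsos}): average the Gram matrix over $\S_n$ to make it equivariant, decompose $\R[x_1,\ldots,x_n]_2\cong 2\cdot\mathrm{triv}\oplus 2\cdot\mathrm{std}\oplus S_{(n-2,2)}$, and block-diagonalize via Schur's lemma to obtain two $2\times 2$ PSD blocks (the $\alpha$- and $\beta$-matrices) and one nonnegative scalar $\gamma$. The only caveats are minor: the explicit power-sum re-expansion of the $S_{(n-2,2)}$ block (which produces the $n$-dependent coefficients in the $\gamma$-term) is correctly identified but not actually carried out, the phrase ``highest-weight vector'' should read ``Specht/Young-symmetrized vector'' since $\S_n$ is not a Lie group, and $x_i-\pi_1/n$ should be $x_i-\pi_1$ since $\pi_1$ already carries the $1/n$ normalization.
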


This stabilization phenomenon extends beyond \( \S_n \). In~\cite{debus2020reflection}, it was shown that similar dimension-independent results hold for polynomials invariant under general finite reflection groups. The Gram matrix can again be block-diagonalized according to isotypic components under the group action, and the SDP problem reduces accordingly.

Symmetry can thus be systematically exploited to simplify SOS decompositions and reduce the complexity of the associated SDP problems quite drastically. In practical applications, from real algebraic geometry to optimization, this often enables the treatment of larger and more structured problems than would otherwise be feasible. However, as mentioned, there are many non-negative polynomials that are not sums of squares (see \cite{blekherman2006}); in particular, this also holds in the symmetric case \cite{goel2016choi}. Therefore, we cannot, unfortunately, use this approach for all computational problems with symmetric polynomials, nor should we expect such a drastic reduction of complexity in every case. In the next section, we will see another way of reducing the geometric dimension of symmetric problems that leads to computational improvements.

\subsubsection{The (Half-)Degree Principle.}\label{sec:degree}

As discussed in previous sections, sums-of-squares techniques benefit significantly from symmetry. One key insight behind the reduced complexity in symmetric SOS decompositions is that, under symmetry, computations can often be transferred to the invariant ring, thus reducing the number of variables. This observation raises a natural question: can other tasks in real algebraic geometry similarly benefit from a reduction to the invariant ring? Unfortunately, the answer is not always yes as the following example.

\begin{example}
Consider the symmetric polynomial \( p_2 := \sum_{i=1}^n x_i^2 \). Clearly, \( p_2(x) \ge 0 \) for all \( x \in \R^n \). However, when expressed in the invariant ring (in terms of the elementary symmetric polynomials), we obtain
\[
  p_2 = e_1^2 - 2e_2.
\]
While this identity is algebraically correct, the polynomial \( e_1^2 - 2e_2 \) is not necessarily non-negative when viewed as a function on the real points of the invariant ring (it can take negative values).
\end{example}

This simple observation illustrates a fundamental limitation: many geometric properties, especially non-negativity, are not preserved under naive substitution into the invariant ring. Thus, a direct dimension reduction via invariants does not always apply.

Nevertheless, symmetry can still be exploited to reduce dimensionality through the so-called \emph{half-degree principle}, which simplifies the analysis of symmetric polynomial inequalities and the description of certain symmetric semi-algebraic sets. The core idea is that to test whether a symmetric inequality holds on all of \( \R^n \), it suffices to verify it on a significantly smaller set of inputs, namely, on those vectors that have only a few distinct coordinate values.

\begin{theorem}[Timofte's Half-Degree Principle {\cite{Timofte2003}}]
Let \( f \in \R[x_1, \ldots, x_n] \) be a symmetric polynomial of degree \( 2d > 2 \). Then \( f \ge 0 \) on \( \R^n \) if and only if \( f \ge 0 \) on the subset of \( \R^n \) in which at most \( d \) coordinates are distinct.
\end{theorem}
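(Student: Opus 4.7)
The strategy is to argue by contradiction using the method of Lagrange multipliers on a carefully chosen compact constraint set. Suppose $f$ is non-negative on every point of $\R^n$ with at most $d$ distinct coordinates, yet $f(x^*) < 0$ for some $x^* \in \R^n$. By Proposition \ref{lm:deg_restrict}, since $\deg f = 2d$, I write $f = F(p_1, \ldots, p_{2d})$ for some $F \in \R[y_1,\ldots,y_{2d}]$, where $p_i$ denotes the $i$-th power-sum symmetric polynomial.

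Next I introduce the Vandermonde variety
$$V_{\mathbf{a}} \;:=\; \{x \in \R^n : p_i(x) = a_i,\ i = 1,\ldots,d\}, \qquad \mathbf{a} := (p_1(x^*), \ldots, p_d(x^*)).$$
This set is $\mathfrak{S}_n$-invariant, contains $x^*$, and is compact: since $p_2(x) = \|x\|^2$ is fixed on $V_\mathbf{a}$, the variety sits inside the sphere of radius $\sqrt{a_2}$. Therefore $f$ attains a minimum on $V_\mathbf{a}$ at some $y$, with $f(y) \le f(x^*) < 0$. Writing $\alpha_i := (\partial F/\partial p_i)(p_1(y),\ldots,p_{2d}(y))$ and using $\partial p_i/\partial x_j = i\, x_j^{i-1}$, the Lagrange condition $\nabla f(y) = \sum_{i=1}^d \mu_i \nabla p_i(y)$ translates into the statement that every coordinate $y_j$ is a root of the univariate polynomial
$$Q(t) \;=\; \sum_{i=d+1}^{2d} \alpha_i\, i\, t^{i-1} \;+\; \sum_{i=1}^{d} (\alpha_i - \mu_i)\, i\, t^{i-1}.$$

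The polynomial $Q$ has degree at most $2d-1$, so a direct application of the first-order condition only gives the weaker bound that $y$ has at most $2d-1$ distinct coordinates. Sharpening this to $d$, the content of the \emph{half}-degree principle, uses that $y$ is actually a minimizer, not merely a critical point. One combines the Lagrange identity with the second-order optimality condition that the restricted Hessian is positive semidefinite on the tangent space of $V_\mathbf{a}$ at $y$. Expressed in the Vandermonde-type basis $(1, y_j, y_j^2, \ldots)$ attached to the distinct values of $y$, this PSD condition forces a rank drop which eliminates the top $d-1$ independent degrees of freedom of $Q$, reducing the maximal number of distinct real roots of $Q$ from $2d-1$ down to $d$. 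Once this bound is in hand, the minimizer $y$ lies in the set of points with at most $d$ distinct coordinates and satisfies $f(y) < 0$, contradicting the hypothesis on $f$.

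The main obstacle is precisely the sharpening from $2d-1$ to $d$: the naive Lagrange argument on $V_\mathbf{a}$ loses a factor of roughly two, and closing this gap is the essential content of the ``half'' in the half-degree principle. Making the second-order/moment-matrix bookkeeping rigorous requires a careful case distinction on the multiplicity pattern of $y$ and a detailed tracking of how the Lagrange multipliers $\mu_i$ are determined by the minimization. This is the technical heart of Timofte's original proof and is the step I would expect to dominate the write-up; the compactness of $V_\mathbf{a}$, by contrast, is an immediate and convenient consequence of the fact that fixing $p_2$ bounds all coordinates simultaneously.
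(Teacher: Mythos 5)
The paper does not reprove this theorem: it states it, cites Timofte, and defers to~\cite{Riener2012} for a ``simple proof,'' so the comparison is against that standard Lagrange-multiplier argument. At the level of strategy your outline tracks it closely and correctly isolates the crux. Minimizing $f$ over the compact Vandermonde fiber $V_{\mathbf a}$ (compact because fixing $p_2$ bounds the Euclidean norm), applying Lagrange multipliers at a regular minimizer $y$ (one with more than $d$ distinct coordinates), and deducing that every $y_j$ is a root of a univariate polynomial $Q$ of degree at most $2d-1$: all of this is right, and so is your observation that the first-order step alone yields only $2d-1$, with the ``half'' being the sharpening to $d$.

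That sharpening is precisely where the gap is, and the mechanism you offer for it — ``the PSD condition forces a rank drop which eliminates the top $d-1$ independent degrees of freedom of $Q$'' — is neither an argument nor an accurate description of what closes the gap. What actually does the work is this: for any two indices $j,j'$ in the same coordinate group ($y_j = y_{j'}$), the vector $e_j - e_{j'}$ is automatically tangent to $V_{\mathbf a}$, and a direct computation shows the restricted Hessian of the Lagrangian in this direction equals $2Q'(y_j)$, so the second-order condition forces $Q'(z_s)\ge 0$ at every \emph{repeated} value $z_s$. Combining $Q(z_s)=0$ at the $k$ distinct values with $Q'(z_s)\ge 0$ at the repeated ones, a root-/sign-change count (between consecutive values where $Q$ crosses upward, $Q$ must vanish again, and roots with vanishing derivative contribute multiplicity at least $2$) yields $\deg Q \ge 2k-1$, whence $k\le d$. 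Making that count airtight requires handling singleton values (for which there is no internal tangent direction and hence no derivative sign), roots of $Q$ of higher multiplicity, and the degenerate case $Q\equiv 0$; none of this is carried out, so as written the proposal proves only the $2d-1$ bound and asserts the rest. Two smaller points: this Lagrange route is Riener's, not Timofte's — Timofte's original argument is an ODE-flow construction, a genuinely different proof — and the identity $f = F(p_1,\ldots,p_{2d})$ invoked from Proposition~\ref{lm:deg_restrict} requires $2d\le n$; the regime $d<n<2d$ needs a separate remark.
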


In other words, a symmetric polynomial \( f \) of degree \( 2d \) is non-negative on \( \R^n \) if and only if it is non-negative on the set of vectors in \( \R^n \) that have at most \( d \) distinct entries. This result reduces the task of verifying the non-negativity of \( f \) from checking all of \( \R^n \) to checking a potentially much smaller subset. However, the effectiveness of this method diminishes for very high-degree polynomials in relatively low dimensions, since if \( d \ge n \) there is essentially no reduction (every vector in \( \R^n \) automatically has at most \( n \) distinct coordinates).

A   simple proof of this principle and a slightly more general version suitable for optimization problems is presented in \cite{Riener2012}.

\begin{theorem}[Riener \cite{Riener2012}]
Let \( f_0, f_1, \dots, f_m \in \R[x_1,\dots,x_n] \) be symmetric polynomials and define the feasible set
\[
  K := \{\, x \in \R^n \mid f_1(x) \ge 0, \;\dots,\; f_m(x) \ge 0 \,\}.
\]
Let
\[
  r := \max\!\Big( 2, \; \big\lfloor \deg(f_0)/2 \big\rfloor, \; \deg(f_1), \; \dots, \; \deg(f_m) \Big).
\]
Then:
\[
  \inf_{x \in K} f_0(x) = \inf_{x \in\, K \cap A_r} f_0(x),
\]
where \( A_r \) denotes the set of points in \( \R^n \) with at most \( r \) distinct coordinates.
\end{theorem}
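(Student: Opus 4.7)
The plan is to combine Karush--Kuhn--Tucker conditions with the topological structure of weighted Vandermonde maps, in order to transport a minimizer down to a point with few distinct coordinates.

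First, I would reduce to the case where the infimum of $f_0$ on $K$ is attained. This can be arranged by adding the symmetric ball constraint $R^2 - p_2(x) \ge 0$, which has degree $2 \le r$ and therefore does not alter the bound on $r$; on the compact set $K \cap \{p_2 \le R^2\}$ a minimizer $x^*$ exists, and the original statement follows by letting $R \to \infty$. At such a minimizer, Karush--Kuhn--Tucker provides non-negative multipliers $\lambda_j$ for the active constraints $J = \{j : f_j(x^*) = 0\}$ satisfying $\nabla f_0(x^*) = \sum_{j \in J} \lambda_j \nabla f_j(x^*)$; the symmetric Lagrangian $L := f_0 - \sum_{j \in J} \lambda_j f_j$ then has degree at most $2r$ and satisfies $\nabla L(x^*) = 0$.

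Second, let $v_1 < \cdots < v_k$ be the distinct coordinates of $x^*$ with multiplicities $\mu = (\mu_1, \ldots, \mu_k)$, so $x^* \in U_\mu^{\rm strict}$. Assume for contradiction that $k > r$. Since each $f_j$ with $j \ge 1$ is symmetric of degree at most $r$, its restriction to the stratum $U_\mu$ is a polynomial in the weighted power sums $p_1^{(\mu)}, \ldots, p_r^{(\mu)}$. By Theorem~\ref{thm:arnold} the weighted Vandermonde map $\nu_{n,r,\mu}$ has contractible fibers and, since $k > r$, the fiber through $v^*$ is positive-dimensional; its closure meets strata corresponding to partitions $\mu' > \mu$ in the refinement order, i.e., configurations where some of the $v_i$ have coalesced. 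On this whole fiber the values $f_j$ for $j \in J$ are constant, so the fiber lifts to a subset of $K$.

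Third, I would analyze how $f_0$ varies along this fiber. Because $\deg f_0 \le 2r$ and $f_0$ is symmetric, its restriction to $U_\mu$ is a polynomial in $p_1^{(\mu)}, \ldots, p_{2r}^{(\mu)}$; along the fiber the first $r$ of these are frozen, and only $p_{r+1}^{(\mu)}, \ldots, p_{2r}^{(\mu)}$ remain free. The infimum of $f_0$ on the closure of the fiber is at most $f_0(x^*)$ and, by compactness inherited from $K \cap \{p_2 \le R^2\}$, is attained at some $y$. If $y$ lies on the boundary, it has strictly fewer distinct coordinates than $x^*$, and iterating the construction descends all the way into $K \cap A_r$. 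If instead $y$ is an interior critical point of $f_0$ on the fiber, combining $\nabla L(y) = 0$ with the expression of $L$ in the power sum basis forces the distinct coordinates of $y$ to satisfy a single univariate polynomial equation whose degree is controlled by the half-degree bound $\lfloor \deg f_0 / 2 \rfloor \le r$, yielding $k \le r$ directly.

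The hard part is this last step, where the factor of two in the half-degree principle actually enters. Two natural routes to formalize it are (i) a direct computation using the second-order optimality condition on $f_0$, expressed through the Hessian in the $p_\ell^{(\mu)}$ coordinates, or (ii) a homotopy argument based on the contractibility of weighted Vandermonde varieties from Theorem~\ref{thm:arnold}, which deforms any interior minimizer to a boundary point without increasing $f_0$. Either way, one must treat with care the degenerate subcase in which the Jacobian of $\nu_{n,r,\mu}$ drops rank at $v^*$, and the induction on $k$ driving the descent must be tracked to guarantee termination, in finitely many steps, at a point of $K \cap A_r$ with value no larger than $\inf_K f_0$.
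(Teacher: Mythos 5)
Your overall architecture is reasonable and indeed close to the actual proof in \cite{Riener2012}: compactify via a symmetric ball constraint, invoke first--order optimality at a minimizer, pass to a Vandermonde variety where the relevant power sums are frozen, and drive a descent of the number of distinct coordinates by alternating between boundary strata of the Weyl chamber and interior critical points. However, there is a genuine gap at exactly the place you flag as ``the hard part,'' and the way you describe that step is actually incorrect, not merely incomplete.

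At an interior critical point $y$ of $f_0$ restricted to the fiber $\{p_1 = a_1,\ldots,p_r = a_r\}$, the first--order (Lagrange) condition forces each distinct coordinate of $y$ to be a root of a single univariate polynomial $G(t) = \sum_{\ell} \tfrac{\partial F_0}{\partial p_\ell}\,\ell\, t^{\ell-1} - \sum_{j\le r}\mu_j\, j\, t^{j-1}$ obtained from the power--sum chain rule. But this $G$ has degree $\deg f_0 - 1$, not $\lfloor \deg f_0/2\rfloor$, so the first--order condition alone only gives the full--degree principle, not the half--degree one. Your sentence ``forces the distinct coordinates of $y$ to satisfy a single univariate polynomial equation whose degree is controlled by the half--degree bound'' is therefore wrong as stated. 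The factor of two is not a degree reduction at all; it comes from the \emph{second--order} optimality condition. Since $y$ is a local minimizer on the fiber, the Hessian of the associated Lagrangian restricted to the tangent space of the Vandermonde variety is positive semidefinite, and this imposes a sign constraint on $G'$ at each root $v_i$. A degree--$D$ real polynomial whose derivative has a fixed sign at $k$ of its simple real roots must have $D \ge 2k-1$, which yields $k \le \lfloor(\deg f_0)/2\rfloor$. You mention the Hessian route as an option (i), but you never execute it, and the mechanism you actually invoke (a degree bound on $G$) does not exist. Your option (ii), deforming via contractibility of the fiber, also does not work by itself: contractibility does not preclude an interior local minimum of $f_0$ on the fiber, so you cannot ``deform any interior minimizer to a boundary point without increasing $f_0$'' absent the second--order analysis.

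A smaller but real confusion: you write $\nabla L(y) = 0$ where $L = f_0 - \sum_{j\in J}\lambda_j f_j$ is the Lagrangian from the KKT conditions at $x^*$. But $y$ is a new minimizer on the fiber, not the original KKT point, so that identity does not hold at $y$. The relevant first--order condition at $y$ has Lagrange multipliers for the frozen power sums $p_1,\ldots,p_r$, not for the original constraints $f_j$. This is the condition that produces the univariate polynomial $G$ above, and conflating it with the KKT Lagrangian of $x^*$ obscures what is actually being bounded.
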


As an immediate consequence, we obtain a certification of real solutions for a single symmetric polynomial:

\begin{corollary}
Let \( f \in \R[x_1, \dots, x_n] \) be a symmetric polynomial, and let 
\[
  r := \max\{\, 2, \; \lfloor \deg(f)/2 \rfloor \}.
\]
If the real variety \( V_{\R}(f) \) is nonempty, then 
\[
  V_{\R}(f) \cap A_r \neq \emptyset.
\]
In other words, if the equation \( f = 0 \) has a real solution, then it has a solution in which at most \( r \) coordinates are distinct.
\end{corollary}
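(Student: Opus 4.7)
The plan is to proceed by two cases based on the sign behavior of $f$ on $\R^n$, using the half-degree principle in both. Write $d := \lfloor \deg(f)/2 \rfloor$, so that $r = \max\{2,d\}$.

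\emph{Sign-changing case.} Suppose $f$ attains both strictly positive and strictly negative values on $\R^n$. Timofte's half-degree principle applied to $f$ shows, via the contrapositive of the non-negativity characterization, that there must exist $a \in A_d \subseteq A_r$ with $f(a) < 0$; applying it instead to $-f$ (same degree and symmetric) gives $b \in A_d \subseteq A_r$ with $f(b) > 0$. A key geometric fact is that $A_r$ is star-shaped with respect to the origin: for any $x \in A_r$ and $t \in [0,1]$, the scaled point $tx$ has no more distinct coordinates than $x$, hence lies in $A_r$. Concatenating the segments from $a$ to $0$ and from $0$ to $b$ produces a continuous path in $A_r$ along which $f$ changes sign, so by the intermediate value theorem $f$ vanishes at some point of $A_r$.

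\emph{Constant-sign case.} Otherwise $f$ is of constant sign on $\R^n$; after replacing $f$ by $-f$ if necessary (which preserves both $V_\R(f)$ and $r$) we may assume $f \ge 0$. Then every real zero of $f$ is a global minimizer, and $\min_{\R^n} f = 0$. I would then apply Riener's theorem above with $f_0 := f$ and no constraints: the threshold is precisely $\max\{2, \lfloor \deg(f)/2\rfloor\} = r$, and one obtains
\[
\inf_{x \in A_r} f(x) \;=\; \inf_{x \in \R^n} f(x) \;=\; 0.
\]

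The hard part is the final step of this second case, namely upgrading ``infimum equals $0$'' to ``some point of $A_r$ is \emph{actually} a zero'', since $A_r$ is unbounded and a priori the infimum could be approached only along a sequence escaping to infinity. The resolution lies inside the proof of Riener's theorem, which is constructive: starting from any minimizer $x^\ast \in \R^n$, it produces a minimizer $\widetilde x \in A_r$ taking the same value $f(\widetilde x) = f(x^\ast) = 0$. This is achieved through a critical-point analysis on the Vandermonde variety through $x^\ast$ (the fiber of the power-sum map $\nu_{n,r}$); combined with the structural result on Vandermonde varieties recalled in Theorem~\ref{thm:arnold}, critical points of $f$ on such a fiber are forced by the geometry to have at most $r$ distinct coordinates, giving the required element of $V_\R(f) \cap A_r$ and completing the argument.
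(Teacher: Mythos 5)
Your sign-changing case is sound, and your use of star-shapedness of $A_r$ (scaling toward the origin cannot increase the number of distinct coordinates) is actually a nicer justification of path-connectedness than the paper's own appeal to a ``line segment between $x_1$ and $x_2$'', which need not stay inside $A_r$. (You would also want to note that Timofte's theorem as stated requires degree $> 2$, which is why $r$ is floored at $2$; your $A_d \subseteq A_r$ glosses over the low-degree edge case.)

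The constant-sign case, however, has a genuine gap, which you yourself flag. Riener's theorem, as cited, only asserts $\inf_{A_r} f = \inf_{\R^n} f$, and $A_r$ is unbounded, so the infimum over $A_r$ need not be attained. Your resolution is to peer inside the proof of the cited theorem and claim it ``produces a minimizer $\widetilde x \in A_r$''; that is not available from the statement you are invoking, and the sketch you give (compactness of the Vandermonde fiber through $x^\ast$, critical-point analysis, Theorem~\ref{thm:arnold}) is essentially a re-derivation of Timofte's argument rather than a citation. The paper sidesteps all of this by a compactification trick: fix a real zero $x$ of $f$, choose $R$ so that $x \in B_R(0)$, and apply Riener's optimization theorem to $f$ and $-f$ over the feasible set $\{R^2 - \sum x_i^2 \ge 0\}$ (a degree-$2$ symmetric constraint, so the threshold is still $r$). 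Then $B_R(0) \cap A_r$ is compact, so the resulting max $m_1 \ge 0$ and min $m_2 \le 0$ are both attained at points of $A_r$, and one IVT step along a path in $A_r$ finishes uniformly — no case split is needed. If you insert this ball-restriction step in place of your unpacking of Riener's proof, the argument closes cleanly and your second case actually subsumes the first.
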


\begin{proof}
Suppose \( x \in \R^n \) is such that \( f(x) = 0 \). Let \( R > 0 \) be large enough so that \( x \) lies in the closed ball \( B_R(0) \) of radius \( R \) centered at the origin. Since \( f \) is continuous, it attains both a maximum and a minimum on \( B_R(0) \); denote these values by \( m_1 \) and \( m_2 \), respectively.

Applying the optimization theorem above (to both \( f \) and \( -f \) on the feasible set defined by \( B_R(0) \)), we find points \( x_1, x_2 \in A_r \) such that \( f(x_1) = m_1 \) and \( f(x_2) = m_2 \). By construction, \( m_1 \geq 0 \) and \( m_2 \leq 0 \) because \( x \) yields the value \( 0 \), which lies between \( m_1 \) and \( m_2 \).

Now, \( A_r \) is connected (intuitively, one can continuously interpolate between any two vectors that have at most \( r \) distinct coordinate values), and \( f \) is continuous. By the intermediate value theorem, there must exist some \( x_3 \in A_r \) on the line segment between \( x_1 \) and \( x_2 \) such that \( f(x_3) = 0 \). This \( x_3 \in A_r \) satisfies \( f(x_3) = 0 \), as required.
\end{proof}

\paragraph{Application to \(\S_n\)-closed sets.}

The original version of this principle applied only to sets defined by symmetric polynomials. However, it is possible to generalize it to more general algebraic and semialgebraic sets that are invariant under permutations, even if the defining polynomials themselves are not symmetric. For algebraic sets, this can be done via a simple observation. Let \( f_1, \dots, f_m \in \R[x_1, \dots, x_n] \) be polynomials (not necessarily symmetric) of degree at most \( d \), and suppose the real variety \( V_{\R}(f_1, \dots, f_m) \) is invariant under all permutations of the coordinates (i.e., it is an \(\S_n\)-closed set). Define a new polynomial
\[
  g := \sum_{i=1}^m \sum_{\sigma \in \S_n} \big(\sigma(f_i)\big)^2,
\]
where \(\sigma(f_i)\) denotes the polynomial obtained by applying the permutation \(\sigma\) to the indices of \(f_i\). By construction, \(g\) is a symmetric polynomial of degree at most \(2d\). Moreover, we have
\[
  V_{\R}(g) = V_{\R}(f_1, \dots, f_m).
\]
Indeed, if \(x\) is a common zero of \(f_1, \dots, f_m\), then obviously \(g(x) = 0\). Conversely, if \(g(x) = 0\), then each squared term in the sum defining \(g\) must vanish; in particular, taking \(\sigma\) to be the identity permutation shows \(f_i(x) = 0\) for all \(i\). Thus, \(V_{\R}(f_1, \dots, f_m)\) is nonempty if and only if \(V_{\R}(g)\) is nonempty. Since \(\deg(g) \le 2d\), by the corollary above, \(V_{\R}(g)\) (and hence also \(V_{\R}(f_1, \dots, f_m)\)) contains a point with at most \(\frac{2d}{2} = d\) distinct coordinates.

For general semi-algebraic sets, such a simple transfer is not directly possible. Indeed, although it is possible to represent every symmetric semi-algebraic set by symmetric polynomials, the degree will be more than just doubled, in general.  However, Riener and Safey El Din proved a more general result for systems of equations and inequalities that are equivariant under the symmetric group~\cite{Riener2018}. Here, equivariance means that we consider an \( n \)-tuple \( \g = (g_1, \dots, g_n) \) of polynomials with the property that 
\[
  g_i(\sigma(x)) = g_{\sigma^{-1}(i)}(x)
\]
for all \(\sigma \in \S_n\). For example, the gradient vector of a \(\mathfrak{S}_n\)-invariant polynomial, such as \(x_1x_2x_3\), forms an \(\mathfrak{S}_n\)-equivariant system; in this case, the gradient is \((x_2x_3, x_1x_3, x_1x_2)\).

\begin{theorem}[\cite{Riener2018}] \label{rienermohab}
Let \( \f = (f_1, \dots, f_k) \) be an \(\S_n\)-invariant tuple of polynomials (i.e., each \( f_i \) is symmetric), and let \( \g = (g_1, \dots, g_n) \) be an \(\S_n\)-equivariant tuple of polynomials in \(\R[x_1, \dots, x_n]\), all of degree at most \( d \). Assume furthermore that each \( g_j \) has degree at least 2. Then the semi-algebraic set
\[
  S(\f, \g) := \{\, x \in \R^n \mid f_i(x) \ge 0 \text{ for } i=1, \dots, k, \quad g_j(x) = 0 \text{ for } j=1, \dots, n \,\}
\]
is empty if and only if
\[
  S(\f, \g) \cap A_{2d-1} = \emptyset.
\]
\end{theorem}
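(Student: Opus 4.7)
The plan is to reduce the theorem to the symmetric semi-algebraic degree principle already recorded as Riener's Theorem \cite{Riener2012}, via a single symmetrization trick. The crucial point is that because $\g$ is equivariant — and not merely because $V(\g)$ is $\S_n$-stable — the polynomial
\[
\tilde g \;:=\; g_1^2 + g_2^2 + \cdots + g_n^2
\]
is actually $\S_n$-invariant: for every $\sigma \in \S_n$,
\[
\tilde g(\sigma x) \;=\; \sum_{j=1}^n g_j(\sigma x)^2 \;=\; \sum_{j=1}^n g_{\sigma^{-1}(j)}(x)^2 \;=\; \tilde g(x).
\]
Moreover $\tilde g \geq 0$ on $\R^n$, $\deg \tilde g \leq 2d$, and $\tilde g(x) = 0$ iff $g_j(x) = 0$ for every $j$, so that $S(\f,\g) = \{x \in \R^n : f_i(x) \geq 0,\ i = 1,\ldots,k\} \cap V_{\R}(\tilde g)$.

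Assuming $S(\f,\g) \neq \emptyset$, fix $p \in S(\f,\g)$ and choose $R > 0$ with $\|p\|^2 \leq R$. Put
\[
K \;:=\; \{\, x \in \R^n : f_1(x) \geq 0,\ \ldots,\ f_k(x) \geq 0,\ R - \|x\|^2 \geq 0 \,\}.
\]
Then $K$ is a compact, $\S_n$-invariant semi-algebraic set whose defining inequalities are symmetric polynomials of degree at most $\max(d,2) = d$ (the hypothesis $\deg g_j \geq 2$ forces $d \geq 2$). Since $\tilde g \geq 0$ everywhere and $p \in K$ with $\tilde g(p) = 0$, we get $\min_K \tilde g = 0$.

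Now apply Riener's semi-algebraic degree principle with $f_0 := \tilde g$ on $K$. Since $\lfloor \deg \tilde g / 2 \rfloor \leq d$ and $\deg f_i \leq d$, the resulting threshold is
\[
r \;:=\; \max\!\bigl(\,2,\ \lfloor \deg \tilde g/2 \rfloor,\ \deg f_1,\ \ldots,\ \deg f_k,\ 2\,\bigr) \;\leq\; \max(2,d) \;=\; d \;\leq\; 2d-1,
\]
so that $\min_{K \cap A_{2d-1}} \tilde g = \min_K \tilde g = 0$. Any minimizer $q \in K \cap A_{2d-1}$ satisfies $\tilde g(q) = 0$, hence $g_j(q) = 0$ for all $j$ while $f_i(q) \geq 0$ for all $i$, and therefore $q \in S(\f,\g) \cap A_{2d-1}$. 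The converse direction is trivial. The only step that truly uses the equivariance hypothesis (as opposed to mere $\S_n$-invariance of $V(\g)$) is the first one: without it, one has no control over whether $\sum g_j^2$ is symmetric, and the reduction to the known symmetric degree principle would break. The rest is bookkeeping, and in fact the halving $\lfloor \deg \tilde g / 2 \rfloor = d$ yields the bound $A_{2d-1}$ with considerable room to spare.
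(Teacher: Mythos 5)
Your proof is correct, and it takes a genuinely different route from the one in \cite{Riener2018}. Riener and Safey El Din argue directly on the equivariant system — roughly, they perturb and analyze critical points of an invariant function restricted to $V(\g)$, keeping the equivariant structure intact because their ultimate goal is an algorithm that returns actual sample points, and the squaring trick $\tilde g = \sum_j g_j^2$ destroys the regularity needed there (the Jacobian of $\tilde g$ vanishes identically along $V(\g)$, so the critical-point machinery on $V(\tilde g)$ degenerates). You, by contrast, only need existence of a real point, and for that the reduction $\tilde g := \sum_j g_j^2$ to the scalar, symmetric, non-negative objective is exactly right: equivariance is precisely what makes $\tilde g$ symmetric, the degree doubles to at most $2d$ and then halves back under $\lfloor \cdot/2 \rfloor$, and the compactification by $R - \|x\|^2 \ge 0$ (degree $2 \le d$) ensures the infimum is attained without raising the threshold. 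The bookkeeping is right: $\deg\tilde g \le 2d$ gives $r \le d$, and $K \cap A_r$ is nonempty (forced by the degree-principle equality, since $\inf_K \tilde g = 0 < \infty$) and compact, so a minimizer $q$ exists with $\tilde g(q)=0$, hence $q \in S(\f,\g)\cap A_d \subset S(\f,\g)\cap A_{2d-1}$.

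One point worth flagging explicitly: as you yourself observe, your argument actually yields the stronger conclusion $S(\f,\g) \cap A_d \ne \emptyset$ whenever $S(\f,\g)\ne\emptyset$, improving $2d-1$ to $d$. This is not a mistake on your part — it simply reflects that emptiness testing is an easier question than what \cite{Riener2018} actually solve (they produce real sample points per orbit type via equivariant critical-point systems, and their $2d-1$ bound is the one their construction naturally produces). It would be worth a sentence in your write-up making clear that you are proving a weaker statement (the decision problem only) by a shortcut that does not generalize to the algorithmic content of the cited theorem.
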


In other words, if the system \(\{ f_i(x) \ge 0, \; g_j(x) = 0 \}\) has no real solution, then already no solution exists with at most \( 2d - 1 \) distinct coordinate values (and vice versa).

It was also shown by  Moustrou, Riener, and Verdure \cite{Moustrou} that a similar statement can be derived for a very large number of $S_n$-invariant ideals, also  over arbitrary fields.

\begin{theorem}
Let \( I \subset \mathbb{K}[x_1, \dots, x_n] \) be an ideal stable under the action of \( \S_n \), and let \( f \in I \) be a polynomial of degree \( d \) such that the degree-\( d \) part of \( f \) involves fewer than \( n - d \) variables (equivalently, at least \( n - d \) of the variables do not appear in any of the top-degree monomials of \( f \)). Then every common solution of \( I \) has at most \( d \) distinct coordinate values.
\end{theorem}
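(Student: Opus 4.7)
I would proceed by contradiction. Assume some $x \in V(I)$ has $r \ge d+1$ distinct coordinate values $c_1,\dots,c_r$. Using the $\S_n$-stability of $I$, after relabeling I may assume that the variable support of the top-degree form $f_d$ is contained in $\{x_1,\dots,x_k\}$ with $k \le n-d-1$, and that the full orbit $\S_n \cdot x$ lies in $V(I)$, so in particular $f$ vanishes on every rearrangement of the coordinate multiset of $x$.

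The key manoeuvre is a specialization that strictly reduces the degree. For any $\tau \in \S_n$, fix the first $k$ positions to the values $(a_1,\dots,a_k) := (x_{\tau(1)},\dots,x_{\tau(k)})$ drawn from the coordinate multiset of $x$, and define
\[
g_\tau(y_{k+1},\dots,y_n) := f(a_1,\dots,a_k,\,y_{k+1},\dots,y_n).
\]
Because $f_d$ involves only $x_1,\dots,x_k$, substituting them out makes its contribution a scalar, so $\deg g_\tau \le d-1$. On the other hand, whenever $(y_{k+1},\dots,y_n)$ is an arrangement of the complementary multiset $\{x_{\tau(k+1)},\dots,x_{\tau(n)}\}$, the tuple $(a_1,\dots,a_k,y_{k+1},\dots,y_n)$ is itself a permutation of $x$ and thus a point of $V(I)$; hence $g_\tau$ vanishes on the full $\S_{n-k}$-orbit of that complementary multiset.

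I would then close the argument by induction on $d$. Since only $k$ entries have been removed from a multiset with $r \ge d+1$ distinct values, $\tau$ can be chosen so the complementary multiset retains at least $d$ distinct values. Let $J' \subset \K[y_{k+1},\dots,y_n]$ be the $\S_{n-k}$-stable ideal generated by the $\S_{n-k}$-orbit of $g_\tau$. If the inductive hypothesis applied to $(J',g_\tau)$ at parameter $d-1$ forces every common zero of $J'$ to have at most $d-1$ distinct coordinate values, the complementary multiset, which lies in $V(J')$ and has at least $d$ distinct values, supplies the contradiction. The base case $d=0$ is trivial: a nonzero constant in $I$ makes $V(I)$ empty and the statement vacuous.

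The main obstacle is that invoking the induction requires the leading form of $g_\tau$ to satisfy the narrow-support condition, i.e., its variable support must have size strictly less than $(n-k)-(d-1)$. This is not automatic, since the lower-degree terms $f_{d-1},f_{d-2},\dots$ of $f$ may contribute top-order monomials in $g_\tau$ spread across all of $y_{k+1},\dots,y_n$. To overcome this I would strengthen the induction by tracking the pair (degree, size of support of the leading form) simultaneously, using the Weyl-chamber stratification from Definition \ref{def:Weyl} to reduce to an equivariant block-symmetric subproblem under $\S_{\ell_1}\times\cdots\times\S_{\ell_r}$. An alternative route, perhaps cleaner, is to bypass the support issue entirely via the Vandermonde map of Theorem \ref{thm:arnold}: the vanishing of $g_\tau$ on the $\S_{n-k}$-orbit translates into a polynomial relation among the first few power sums of the complementary coordinates whose total degree is small enough to force the underlying multiset to have at most $d$ distinct entries, contradicting the choice of $\tau$.
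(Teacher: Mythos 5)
The paper itself does not prove this theorem; it only cites it as \cite[Theorem 4]{Moustrou} (Moustrou--Riener--Verdure), so there is no in-text proof to compare against. Evaluating your proposal on its own merits: the specialization-and-induct scheme is a reasonable first idea, and your bookkeeping up to the inductive step is correct (in particular, your observation that the complementary multiset lies in \(V(J')\), and that \(\tau\) can always be chosen so the complementary multiset retains at least \(d\) distinct values). However, there is a genuine gap which you flag yourself and do not close, and neither of your two proposed repairs works.

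The gap is fatal in general. For the inductive hypothesis at parameter \(d-1\) to apply to \((J', g_\tau)\) you need the leading form of \(g_\tau\) to involve fewer than \((n-k) - (d-1)\) of the \(n-k\) remaining variables. But take, for instance, \(f = x_1^d - \sum_{i\ge 2} x_i^{d-1}\) (so \(k=1\), support condition holds whenever \(d < n-1\)). Specializing \(x_1 = a_1\) gives \(g_\tau = a_1^d - \sum_{i\ge 2} y_i^{d-1}\), whose leading form has support of size \(n-1\), far exceeding the required bound \((n-1)-(d-1) = n-d\). So the support-shrinking property simply fails and the induction cannot proceed. Your first repair (\emph{tracking the pair (degree, support size)}) does not help, because this example shows the support can jump to full size after one specialization, so no monotone invariant of that pair is preserved. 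Your second repair (\emph{the Vandermonde map}) rests on the claim that vanishing of \(g_\tau\) on one \(\mathfrak{S}_{n-k}\)-orbit yields a power-sum relation of small degree that forces few distinct coordinates; this is not a valid inference. A single low-degree symmetric polynomial constraint (e.g.\ \(p_1 = c\)) cuts out a hypersurface in the Newton coordinates and says nothing about the number of distinct roots of the underlying multiset. There are also secondary issues left unaddressed: \(g_\tau\) may be the zero polynomial (if \(f\) has a factor vanishing under the chosen specialization), and even when nonzero its degree may drop below \(d-1\) with cancellation, which would again make the inductive invariant ill-posed.

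For comparison, the route taken in \cite{Moustrou} is structurally different: it does not induct on degree via specialization, but instead analyzes the ideal generated by the \(\mathfrak{S}_n\)-orbit of \(f\) through the lens of Specht polynomials, showing that the narrow-support condition on the leading form forces a Specht polynomial (whose vanishing locus is precisely the set of points with few distinct coordinates) to vanish on \(V(I)\). That argument is global and representation-theoretic rather than recursive, and sidesteps the support-propagation problem that your induction runs into. If you want to salvage the inductive approach, a plausible direction is to use the full \(\mathfrak{S}_n\)-orbit of \(f\) (not just one specialization) before inducting, so that the orbit supplies, for each coordinate index \(j\), a univariate constraint of degree \(\le d\) on \(x_j\) with coefficients depending only on symmetric functions of the remaining entries; this is exactly what happens in the example above, where each coordinate must satisfy \(t^d + t^{d-1} = \text{const}\). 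But as written, your proof does not establish the theorem.
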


The full result of Moustrou–Riener–Verdure (see \cite[Theorem 4]{Moustrou}) provides a finer classification of the possible “partition types” (patterns of repeating coordinates) that can occur in common solutions, based on the monomials appearing in the highest-degree part of \( f \). Corresponding  results are also available for other reflection groups \cite{debus2,debus3}. 

A similar result holds in the setting of \emph{multi-symmetric} actions of the symmetric group $S_n$, where the group acts by permuting blocks of $k$-tuples of variables rather than individual coordinates. This was established by G\"orlach, Riener, and Weisser in \cite{multi}. Furthermore, Riener and Schabert 
\cite{slices} extended the scope of the degree principle to a broad class of symmetric polynomials without any restriction on degree. They showed that the principle applies to any family of symmetric polynomials that can be expressed as linear combinations of a fixed number of elementary symmetric polynomials, independent of the total degree. Finally, beyond the symmetric group, analogous reduction principles have been developed for finite reflection groups by Acevedo and Velasco \cite{av}, and  by Friedl, Riener, and Sanyal \cite{friedl}.

\subsubsection{Efficient Algorithms Using the Degree Principle.}\label{sec:degreealgo}

The degree principle enables the reduction of computational complexity in decision problems such as emptiness testing, feasibility, or optimization over real algebraic sets. Here, we provide a brief overview of problems for which the principle is leveraged to design efficient symbolic algorithms. In the next subsection, we focus specifically on the problem of deciding the emptiness of semi-algebraic sets.

\paragraph*{Testing Non-Negativity.}  Given a polynomial \(f \in \K[x_1, \dots, x_n]\), we want to check whether \( f(x) \ge 0 \) for all \( x \in \mathbb{R}^n \). Instead of checking this in full dimension \( n \), the degree principle tells us to test only over configurations with at most \(\max(2, \lfloor \frac{\deg(f)}{2}\rfloor)\) distinct coordinates. For example, when \(d = 4\), we only need to test over the points of the form
\[
x = (\underbrace{y_1, \ldots, y_1}_{k}, \underbrace{y_2, \ldots, y_2}_{n-k}) 
\]
Substituting this into \( f \), we get a polynomial in terms of \( y_1, y_2 \), and check whether the inequality holds over all real values. This symbolic computation can be handled via standard tools like resultants, Cylindrical Algebraic Decomposition, or SOS techniques.

\paragraph{Emptiness of a Symmetric Semi-Algebraic Set.}
Consider the set
\[
S = \{ x \in \mathbb{R}^n : f(x) = 0,\ g_1(x) \ge 0,\ \ldots,\ g_m(x) \ge 0 \},
\]
where \( f, g_1, \ldots, g_m \) are symmetric polynomials of degree at most \( d \). Using the degree principle, one constructs a family of candidate solutions:
\[
x = (\underbrace{y_1, \ldots, y_1}_{\lambda_1 \text{ times}}, \ldots, \underbrace{y_d, \ldots, y_d}_{\lambda_d\text{ times}}),
\]
where \( \lambda_1 + \cdots + \lambda_d = n \), and checks whether there exists a real solution to the constraints under this parametrization.

This reduces the problem to deciding the feasibility of a system of polynomial equalities and inequalities in the \( y_i \)'s and multiplicities \( \alpha_i \), which is significantly more tractable symbolically when \( d \ll n \). We will give more details about the problem of deciding the emptiness of semi-algebraic sets defined by symmetric polynomials in Section \ref{sub:empty} beyond the fixed degree case.

\paragraph{Computing the Generalized Euler–Poincaré Characteristic.}

The \emph{Euler–Poincaré characteristic} is an important invariant in real algebraic geometry. The Generalized Euler–Poincaré Characteristic is a significant generalization that - unlike the classical topological Euler characteristic - satisfies an \emph{additivity property}, making it suitable for use in algorithmic frameworks such as Euler integration and symbolic computation.

The development of efficient algorithms for computing this invariant in symmetric settings builds on the optimization version of the degree principle described above. In \cite{BasuRiener2013,basu2018equivariant}, Basu and Riener combined this principle with tools from \emph{Morse theory}, leading to new complexity bounds and algorithmic methods for analyzing symmetric semi-algebraic sets.

\begin{definition}
Let \( X \) be a semi-algebraic set, and let \( \F \) be a field. We denote  the \( i \)-th homology group of \( X \) with coefficients in \( \F \) by \( \HH_i(X, \F) \), and its dimension by 
\[
b_i(X, \F) := \dim_{\F} \HH_i(X, \F).
\]
\end{definition}

Morse theory tells us that Betti numbers are bounded by the number of critical points of a smooth function. In symmetric settings, this idea can be adapted using \emph{equivariant Morse functions}, which respect the symmetry of the space. The following theorem, proved in \cite{BasuRiener2013}, gives explicit bounds on equivariant Betti numbers for symmetric varieties.

\begin{theorem}[\cite{BasuRiener2013}]
Let \( V \subset \mathbb{R}^n \) be defined by a symmetric polynomial \( f \) of degree \( \le d \).Then:

\begin{itemize}
    \item The Betti numbers of the orbit space satisfy:
    \[
    b(V / \S_n, \F) \le \sum_{\substack{\boldsymbol{\ell}=(\ell_1,\ldots,\ell_\omega)\\ 1 \le \ell_i \le \min(k_i, 2d)}} p(\mathbf{k}, \boldsymbol{\ell}) \cdot d \cdot (2d - 1)^{|\boldsymbol{\ell}| + 1}.
    \]
    
    \item All higher Betti numbers vanish:
    \[
    b_i(V / \S_n, \F) = 0 \quad \text{for } i \ge \sum_{j=1}^\omega \min(k_j, 2d).
    \]
    
    \item If \( 2d \le k_i \) for all \( i \), then:
    \[
    b(V /{\S}_{n}, \F) \le (k_1 \cdots k_\omega)^{2d} \cdot (O(d))^{2\omega d + 1}.
    \]
    
    \item For \( \F = \mathbb{Q} \), the total equivariant Betti number satisfies:
    \[
    b^{\mathfrak{S}_{\mathbf{k}}}(V, \mathbb{Q}) \le \sum_{\boldsymbol{\ell}} p(\mathbf{k}, \boldsymbol{\ell}) \cdot d \cdot (2d - 1)^{|\boldsymbol{\ell}| + 1}.
    \]
\end{itemize}
\end{theorem}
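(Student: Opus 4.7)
The plan is to combine the half-degree principle (Theorem~\ref{rienermohab}) with an equivariant Morse-theoretic analysis on the orbit space $V/\S_{\mathbf{k}}$, where $\mathbf{k}=(k_1,\dots,k_\omega)$ encodes the block decomposition of the symmetric action. I would first stratify the fundamental domain (the product of canonical Weyl chambers $\W_c$) by the faces $\W_c^\lambda$ indexed by compositions $\lambda$ compatible with $\mathbf{k}$, so that $V/\S_{\mathbf{k}}$ decomposes along these strata of $\R^n/\S_{\mathbf{k}}$. On a stratum of composition type $\boldsymbol{\ell}=(\ell_1,\dots,\ell_\omega)$ the restriction $f^{[\lambda]}$ is a polynomial in $\length(\lambda)=|\boldsymbol{\ell}|$ variables of degree at most $d$, and the real points of $V$ that project onto this stratum are cut out by this compressed system.

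Next I would choose a generic $\S_{\mathbf{k}}$-invariant perturbation of a proper distance function so that its restriction to each stratum is a Morse function, and invoke the Morse inequalities stratum by stratum: the $i$-th Betti number of $V\cap \W_c^\lambda$ is bounded by the number of critical points of index $i$. Bounding the total number of critical points of $f^{[\lambda]}$ restricted to a stratum by a B\'ezout estimate applied to the gradient system, together with the extra linear-algebra equations from Lemmas~\ref{lemma:decom} and~\ref{lemma:extend_slice} that guarantee the Jacobian keeps its rank, yields $d\cdot(2d-1)^{|\boldsymbol{\ell}|+1}$ as a uniform upper bound on each stratum. Multiplying by the number of strata of a given composition shape, namely $p(\mathbf{k},\boldsymbol{\ell})$, and summing over admissible $\boldsymbol{\ell}$ with $1\le \ell_i\le \min(k_i,2d)$ gives the first inequality.

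The vanishing statement $b_i(V/\S_{\mathbf{k}},\F)=0$ for $i\ge \sum_j \min(k_j,2d)$ follows from the same stratification: by Theorem~\ref{rienermohab} the degree principle forces real solutions into strata whose length is at most $\sum_j \min(k_j,2d)$, so the relevant chain complex is zero in higher degree. The simplified bound in the third bullet is then immediate from $\min(k_i,2d)=2d$, since in that regime the partition count $p(\mathbf{k},\boldsymbol{\ell})$ is crudely bounded by $(k_1\cdots k_\omega)^{2d}$, and the geometric factor collapses to $(O(d))^{2\omega d+1}$. The total equivariant Betti number bound with $\F=\Q$ then follows by summing the stratum-wise estimates, using that transfer homomorphisms allow one to recover equivariant Betti numbers over $\Q$ from those of the orbit space.

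The main obstacle is that $V\cap \W_c^\lambda$ is in general singular and stratified, so classical Morse theory cannot be applied directly; one needs either Goresky--MacPherson stratified Morse theory in its equivariant form, or a symmetric tubular approximation of $V$ (in the spirit of Section~\ref{sec:symsos}, thickening $f$ by a small symmetric sum of squares to obtain a smooth symmetric hypersurface homotopy equivalent to $V$). Gluing the local Morse data across the closure relations $\lambda\prec\lambda'$ via a Mayer--Vietoris spectral sequence, while keeping the critical-point count uniform in $n$ (so that the bound only depends on $\mathbf{k},\boldsymbol{\ell}$ and $d$), is the delicate part; the combinatorial bookkeeping of the multiplicities $p(\mathbf{k},\boldsymbol{\ell})$ is then a routine consequence.
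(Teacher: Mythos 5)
Your overall strategy — equivariant Morse theory on the orbit space, critical points forced onto low-dimensional faces of the Weyl chamber, and Petrovsky--Ole\u{\i}nik--Thom--Milnor-type counts on those faces — matches the approach attributed to \cite{BasuRiener2013} in the surrounding text: a symmetric (perturbed) Morse function with critical orbits of bounded partition length, together with stratum-dimensional degree bounds, which is exactly what produces the $d\cdot(2d-1)^{|\boldsymbol{\ell}|+1}$ factor and the $p(\mathbf{k},\boldsymbol{\ell})$ multiplicity count. You also correctly identify the two genuinely delicate points (singularity of $V\cap\W_c^\lambda$ and the passage from stratum-wise bounds to a global bound, since Betti numbers are not additive over the closed cover by faces).

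However, the vanishing argument as written does not work. You invoke Theorem~\ref{rienermohab} to claim that ``the degree principle forces real solutions into strata whose length is at most $\sum_j\min(k_j,2d)$.'' That is not what the degree principle says. Theorem~\ref{rienermohab} (and Timofte's half-degree principle and its corollaries in Section~\ref{sec:degree}) is an \emph{existence} statement: if the feasible set is nonempty, it already has a point with at most $r$ distinct coordinates; equivalently, infima are attained on $A_r$. It does not constrain \emph{all} points of $V$, and in general $V/\S_{\mathbf{k}}$ is genuinely not contained in the $\le\sum_j\min(k_j,2d)$-skeleton of the chamber. So the claim that ``the relevant chain complex is zero in higher degree'' is unjustified as stated. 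What makes the vanishing true is different: one must show that the \emph{Morse indices} of the descended function are bounded by $\sum_j\min(k_j,2d)$ (so that the orbit space is homotopy-equivalent to a CW complex of that dimension), which requires analyzing the index at each critical orbit using the equivariant Morse lemma and the fact that the critical orbits of a suitably generic symmetric Morse function have bounded partition length. Alternatively, as the paper's later discussion of \cite{basu2018equivariant} shows, the vanishing can be obtained via Arnold's theorem on Vandermonde varieties (Theorem~\ref{thm:arnold}), which gives a homotopy equivalence between the orbit space and a $d$-dimensional semi-algebraic subset. Either route replaces your appeal to the degree principle with a statement about the critical structure or the homotopy type of the \emph{whole} orbit space, not merely about the location of \emph{some} real point.
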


The Morse theoretic proof for this bounds developed in \cite{BasuRiener2013}  forms the basis for an efficient algorithm of the Euler–Poincaré characteristic.
In general, the Euler-Poincaré charateristic is defined as follows:
\begin{definition}
The \emph{topological Euler–Poincaré characteristic} (EPC) of a semi-algebraic set \( S \subset \mathbb{R}^k \) is the alternating sum of its Betti numbers:
\[
\chi^{\mathrm{top}}(S, \F) := \sum_i (-1)^i b_i(S, \F).
\]
\end{definition}

However, for many algorithmic and integrative purposes, the \emph{generalized Euler–Poincaré characteristic} is more useful due to its additivity. It agrees with the topological one on compact sets but differs in general.

\begin{definition}[Generalized Euler–Poincaré Characteristic  \cite{Dries}]
The invariant \( \chi^{\gen}(S) \in \mathbb{Z} \) is uniquely characterized by:
\begin{enumerate}
    \item Invariance under semi-algebraic homeomorphisms,
    \item Multiplicativity: \( \chi^{\gen}(A \times B) = \chi^{\gen}(A) \cdot \chi^{\gen}(B) \),
    \item Additivity: \( \chi^{\gen}(A \cup B) = \chi^{\gen}(A) + \chi^{\gen}(B) - \chi^{\gen}(A \cap B) \),
    \item Normalization: \( \chi^{\gen}([0,1]) = 1 \).
\end{enumerate}
\end{definition}

There is a  general outline of singly-exponential algorithms for computing Euler characteristics (see \cite[Chapter 13]{BPR06}), which use deformations to smooth sets with non-degenerate critical points. Building on this general approach, Basu and Riener implemented  two critical adaptations to arrive at a polynomial time algorithm for the symmetric case:

\begin{itemize}
    \item \textbf{Equivariant construction:} Firstly, both the perturbation and the Morse function must respect the symmetry. This significantly restricts the deformation space and ensures polynomial complexity (for fixed degree).

    \item \textbf{Equivariant Morse theory:} Secondly, the analysis of topological changes at critical points relies on new equivariant Morse lemmas, which they provided. This then allowed for a control of the number of critical orbits is controlled using the \emph{half-degree principle} described above, rather than general Bézout-type bounds.
\end{itemize}
The combination of these adaptations gives rise to the following algorithmic result:

\begin{theorem}[Algorithm for generalized EPC \cite{Basu2018}]
\label{thm:algorithm-sa}
Let \( \mathcal{D} \subset \mathbb{R} \) be an ordered domain. Given:
\begin{itemize}
    \item a non-negative integer $k\in\N$;
    \item A set \( \mathcal{P} = \{P_1, \ldots, P_s\} \subset \mathcal{D}[\X] \) of symmetric polynomials of degree at most \( d \);
    \item A \( \mathcal{P} \)-semi-algebraic set \( S = \bigcup_{\sigma \in \Sigma} \mathcal{R}(\sigma, \mathbb{R}^n) \), with \( \Sigma \subset \{0,1,-1\}^\mathcal{P} \).
\end{itemize}
The algorithm computes \( \chi^{\gen}(S) \) and \( \chi^{\gen}_{{\S}_{n}}(S) \) with complexity bounded by:
\[
\card(\Sigma)^{O(1)} + s^{D} k^d d^{O(D^2)} + s^{D} d^{O(D)} (k \omega D)^{O(D)},
\]
where $D = \min\{n,d\}.$
\end{theorem}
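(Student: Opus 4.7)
The plan is to reduce the computation to a union of sub-problems, one for each composition of $n$ with at most $D = \min\{n,d\}$ parts, and on each such stratum run the standard singly-exponential algorithm for the Euler--Poincaré characteristic in the reduced number of variables. The key tool is the degree principle (Theorem on Timofte's half-degree principle and its extension described in Section~\ref{sec:degree}), which forces the topology-relevant critical locus of any $\mathfrak S_n$-equivariant Morse function built from symmetric polynomials of degree $\le d$ to concentrate on the union $\bigcup_{\length(\lambda)\le D}\W_c^\lambda$, together with the additivity of $\chi^{\gen}$ recalled in its axiomatic definition.

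First, I would decompose $S$ as the disjoint union of its realizations $\mathcal{R}(\sigma,\R^n)$ for $\sigma\in\Sigma$. By additivity of $\chi^{\gen}$, it suffices to compute $\chi^{\gen}\bigl(\mathcal{R}(\sigma,\R^n)\bigr)$ for each $\sigma$; the combinatorial bookkeeping to assemble these using inclusion--exclusion accounts for the $\card(\Sigma)^{O(1)}$ term. Each realization is $\mathfrak S_n$-stable, so it decomposes as a disjoint union over its intersection with the strata $\W_c^\lambda$ of the Weyl chamber for $\lambda\in\Comp(n)$, and again by additivity
\[
\chi^{\gen}\bigl(\mathcal{R}(\sigma,\R^n)\bigr)=\sum_{\lambda\in\Comp(n)} \mu_\lambda^{-1}\,\bigl|\mathfrak S_n\cdot\W_c^\lambda\bigr|\cdot\chi^{\gen}\bigl(\mathcal{R}(\sigma,\R^n)\cap\W_c^\lambda\bigr),
\]
where the intersection on the right is described by the restricted polynomials $P_i^{[\lambda]}$ of Section~\ref{sec_hom} in only $\length(\lambda)$ variables. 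The equivariant version $\chi^{\gen}_{\mathfrak S_n}$ is obtained by the same sum with weights recording orbit multiplicities.

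Next, I would invoke the degree principle in the strengthened form underlying Theorem~\ref{rienermohab}: any stratum with $\length(\lambda)>D$ contributes only topological pieces already accounted for on strata of length $\le D$, because the equivariant Morse function constructed from a generic infinitesimal symmetric perturbation of the defining polynomials has all its critical orbits supported on $\bigcup_{\length(\lambda)\le D}\W_c^\lambda$. This is exactly the equivariant Morse-theoretic input of \cite{BasuRiener2013,basu2018equivariant}: one perturbs $\mathcal P$ equivariantly to make the relevant variety smooth and bounded, then applies an equivariant Morse lemma to control how $\chi^{\gen}$ changes as one sweeps across critical values. On each stratum of length $D'\le D$, the restricted system lives in $D'$ variables with $s$ polynomials of degree $\le d$, so the standard singly-exponential EPC algorithm of \cite[Chapter~13]{BPR06} applies, with cost $s^{D'}d^{O(D'^2)}$ for the combinatorial part and $s^{D'}d^{O(D')}(k\omega D')^{O(D')}$ for the arithmetic in the ordered domain $\mathcal D$ with bit-size parameters $k$ and $\omega$; these sum to the stated bound after taking $D'\le D$.

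The main obstacle is the equivariant deformation step: one must produce a symmetric infinitesimal perturbation of $\mathcal{P}$ that is simultaneously (i) generic enough so that every restricted system $\{P_i^{[\lambda]}\}$ defines a nonsingular bounded variety on $\W_c^\lambda$, (ii) compatible across faces so that the transition contributions to $\chi^{\gen}$ telescope correctly under the stratification $\W_c^\lambda\subset\W_c^{\lambda'}$ for $\lambda\prec\lambda'$, and (iii) cheap enough that its introduction does not inflate the complexity beyond the claimed bound. Once that perturbation is in place, an equivariant Morse-lemma analysis identifies the local change of $\chi^{\gen}$ at each critical orbit as $(-1)^{\mathrm{ind}}$ weighted by the orbit size $\mu_\lambda$, and summing these local contributions over the polynomially many (for fixed $d$) strata of length $\le D$ recovers both $\chi^{\gen}(S)$ and $\chi^{\gen}_{\mathfrak S_n}(S)$ within the asserted complexity.
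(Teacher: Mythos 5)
Your proposal correctly identifies the high-level ingredients the paper attributes to Basu--Riener (equivariant perturbation, equivariant Morse lemmas, half-degree principle bounding critical orbits), and the final paragraph about summing local Morse contributions over critical orbits is in the right spirit. But the middle step contains a genuine gap. You write a stratified-additivity formula over \emph{all} compositions $\lambda\in\Comp(n)$ --- of which there are $2^{n-1}$ --- and then claim that strata with $\length(\lambda)>D$ ``contribute only topological pieces already accounted for on strata of length $\le D$'' because a generic equivariant Morse function has all critical orbits on $\bigcup_{\length(\lambda)\le D}\W_c^\lambda$. This conflates two logically independent computations. In the additive decomposition, every stratum (even the exponentially many of length $>D$) has a definite, generally nonzero contribution to $\chi^{\gen}$, and there is no mechanism in additivity by which these contributions vanish or ``telescope'' away just because some auxiliary Morse function happens to have no critical points there. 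The localization of critical orbits is a fact about the Morse-theoretic computation, not about the stratum-wise sum; you cannot transfer one to the other without argument. The actual algorithm, as the paper sketches, never decomposes $\chi^{\gen}$ over strata at all: it perturbs $\mathcal{P}$ equivariantly to obtain a smooth bounded set, sweeps an equivariant Morse function, and computes $\chi^{\gen}$ directly as a signed count over critical orbits, whose number (and ambient stratum length) the half-degree principle bounds polynomially in $n$ for fixed $d$. Your proposal reaches that description only at the very end, after an intermediate reduction that does not hold.

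Two smaller points. First, the formula $\chi^{\gen}(\mathcal{R}(\sigma,\R^n))=\sum_{\lambda}\mu_\lambda^{-1}|\S_n\cdot\W_c^\lambda|\,\chi^{\gen}(\mathcal{R}(\sigma,\R^n)\cap\W_c^\lambda)$ is not a consequence of additivity as written: the faces $\W_c^\lambda$ are nested, not disjoint, so one would need a M\"obius-inversion over the face poset, and the weights $\mu_\lambda^{-1}|\S_n\cdot\W_c^\lambda|$ are not justified. Second, the parameters $k$ and $\omega$ in the stated complexity come from the multi-symmetric block structure $\S_{k_1}\times\cdots\times\S_{k_\omega}$ in the original Basu--Riener formulation (the preceding Betti-number theorem in this section uses the same $\mathbf{k}$, $\omega$), not from bit-size parameters of $\mathcal{D}$ as you assumed.
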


\subsubsection{General Complexity Reduction via The Degree Principle.}

A key feature of the (half-)degree principle is that it allows  to reduce the original problem to a finite union of linear subspaces of dimension at most \( r = \max\{2, \lfloor d/2 \rfloor\} \). These subspaces correspond to points with a bounded number of distinct coordinates and their number is controlled by a polynomial in \( n \) (for fixed degree \( d \)). This means that for many algorithmic problems involving symmetric polynomials, such as testing non-negativity, deciding emptiness of semi-algebraic sets, or computing topological invariants, we obtain algorithms whose complexity is polynomial in the ambient dimension.

As we have seen in several examples above, this approach has proven to be surprisingly effective. It provides not only conceptual simplification, but also concrete complexity reductions in symbolic algorithms. In particular, it has been used to design polynomial-time algorithms for computing the generalized Euler–Poincaré characteristic, bounding Betti numbers, and deciding the feasibility of symmetric systems.

However, the dependence on the degree remains a crucial limitation. When the degree \( d \) increases with \( n \), the number and dimension of the subspaces involved can become too large, and the method loses its practical efficiency. This degree bottleneck is a fundamental caveat in direct use of the principle and highlights the need for alternative strategies or refinements when working with high-degree input. A result in this direction is presented in the next section. 

Furthermore, the underlying idea of reducing to points with few distinct coordinates is not limited to symmetric group actions. A generalization of the (half-)degree principle has been established for real algebraic sets invariant under the action of any finite reflection group by Friedl, Riener and Sanyal \cite{Riener2018}. This shows that the phenomenon is rooted more deeply in the structure of invariant theory and geometric symmetry and hopefully opens  the door to further applications beyond the symmetric case.

\subsection{More Efficient Algorithms Beyond the Fixed Degree Case} \label{sub:empty}
In real algebraic geometry, an important problem is to determine whether a given system of polynomial equations and inequalities has a real solution, that is, whether the real (semi-)algebraic set it defines is nonempty.  This problem is   intrinsically hard \cite{BlumCuckerShubSmale2012} and computationally challenging, for example, quantifier elimination over the reals is doubly exponential in the number of variables. 

The earliest algorithmic approaches trace back to Fourier~\cite{Fourier26}, who provided a method for solving linear inequalities, later rediscovered by Dines in 1919~\cite{Dines19}. These methods form an early bridge to elimination theory.
Tarski’s theorem~\cite{Tarski} asserts that the projection of a semi-algebraic set onto a coordinate subspace is also semi-algebraic. Its algorithmic counterpart, based on Sturm’s theorem for univariate real root counting, enables recursive strategies by eliminating variables one at a time. The first algorithm with elementary recursive complexity, known as Cylindrical Algebraic Decomposition (CAD), was introduced by Collins~\cite{Collins} and further developed in subsequent works~\cite{McCallum,McCallumeq,Hong:92d,Strz06,Strz14,Chen09,EnDa20,Chen20}.
However, these algorithms are typically run in doubly exponential time in the number of variables~\cite{Dav88,BrDa07}. Notably, many variants also address quantifier elimination, which is a more general and computationally harder problem than the real root decision problem.
More efficient algorithms with singly exponential time complexity (in the number of variables) and polynomial dependence on the degree were pioneered by Grigoriev and Vorobjov~\cite{GV88}, and Renegar~\cite{Ren}, with further advancements by Canny~\cite{Canny}, Heintz, Roy, and Solern\'o~\cite{HRS93}, and Basu, Pollack, and Roy~\cite{BPR96}. These approaches utilize the {critical point method}, which reduces the problem to computing finitely many complex critical points of a polynomial map that achieves extrema on each connected component of the semi-algebraic set.
Table \ref{tab:real-root-complexity} gives a summary of the complexities of the algorithms mentioned above. 

\begin{table}[h!]
\centering
\caption{Summary of Algorithms for the Real Root Decision Problem}
\renewcommand{\arraystretch}{1.3}
\begin{tabular}{|>{\centering\arraybackslash}p{3.5cm}|
                >{\centering\arraybackslash}p{4.5cm}|
                >{\centering\arraybackslash}p{3cm}|}
\hline
\textbf{Authors} & \textbf{Method} & \textbf{Complexity} \\
\hline
Fourier~\cite{Fourier26}, Dines~\cite{Dines19} & Fourier-Motzkin Elimination & Exponential in $n$ \\
\hline
Collins~\cite{Collins} & Cylindrical Algebraic Decomposition & $d^{2^{O(n)}}$ \\
\hline
Tarski~\cite{Tarski} & Projection Theorem (non-constructive) & Non-elementary \\
\hline
Grigoriev-Vorobjov~\cite{GV88} & Critical Point Method & $(sd)^{2^{O(n)}}$ \\
\hline
Renegar~\cite{Ren} & Critical Point Method & $s^{2^{O(n)}} d^{O(n)}$ \\
\hline
Canny~\cite{Canny} & Critical Point Method & $s^{O(n)} d^{O(n^2)}$ \\
\hline
Heintz, Roy, Solernó~\cite{HRS93} & Improved Critical Point Method & $s^{O(n)} d^{O(n)}$ \\
\hline
Basu, Pollack, Roy~\cite{BPR96} & Improved Critical Point Method & $s d^{O(n)}$ \\
\hline
\end{tabular}
\label{tab:real-root-complexity}
\end{table}

In the case of symmetric polynomials, the degree principle and its extensions offer a powerful simplification.
The degree principle, when the bound \(d\) on the degrees of the polynomials satisfies \(d \le n/2\), allows this question to be reduced to checking a finite union of lower-dimensional subspaces, specifically, those defined by tuples with bounded numbers of distinct entries, as shown in Theorem \ref{rienermohab} by Riener and Safey El Din~\cite{Riener2018}. Their work  extends the degree  principle to systems defined not only by symmetric but also equivariant families of polynomials. The main fininds show that such systems, of degree \(d\), either have no solution or possess one with at most \(2d-1\) distinct coordinates. This  geometric insight then enables the design of algorithms with complexity polynomial in the number of variables (and singly exponential in the degree). Moreover, the authors show that these techniques can be extended to parameterized systems, allowing efficient quantifier elimination in a broad class of problems with symmetric structure.

\begin{pb}[The Emptiness of Symmetric Semi-Algebraic Sets]   
Consider a real semi-algebraic set \(S\) defined by 
symmetric polynomials in \(\mathbb{R}[x_1, \dots, x_n] \). Determine whether the set \( S\) is nonempty? 
\end{pb}

However, when \(2d > n\), the principle becomes vacuous: every point in \(\R^n\) already has at most \(\lfloor d/2 \rfloor\) distinct entries. In this case, the method no longer restricts the search space, and the full \(\R^n\) must be considered, reverting to the original complexity of the problem. 

The main idea is to use the critical point method. However,
leveraging $\S_n$-invariance in critical point computations alone is not sufficient to solve real root decision problems more efficiently using the critical point method. Therefore, additional techniques are required.

Instead of applying the critical point method directly to the input system, we can apply it to each individual $\mathfrak{S}_n$-orbit, using a suitable polynomial map tailored to each orbit. Intuitively, working with $\mathfrak{S}_n$-orbits corresponds to separately examining real points with all distinct coordinates, points with some repeated coordinates, or points partitioned into groups of equal coordinates. In each case, an orbit can be characterized by points with a bounded number of pairwise distinct coordinates, a key observation that enables the construction of generic, orbit-invariant maps, encoded by partitions \(\lambda = (n_1^{\ell_1} \, \dots \, n_r^{\ell_r})\) of \(n\).

\subsubsection{Critical Point Method for Symmetric Polynomials.}
Let \(W \subset \C^\ell\) be an equidimensional algebraic set, and let \(\phi\) be a polynomial function defined on \(W\). A non-singular point \(\bm w \in W\) is called a \emph{critical point} of \(\phi\) on \(W\) if the gradient of \(\phi\) at \(\bm w\) is normal to the tangent space \(T_{\bm w} W\) of \(W\) at \(\bm w\).

If \(\bm g = (g_1, \dots, g_s)\) are generators of the ideal associated with \(W\), then \(T_{\bm w}W\) is the right kernel of the Jacobian matrix \(\jac(\bm g)\) evaluated at \(\bm w\). In the cases we consider, this matrix has rank \(s\) at all points of \(W\). The set of critical points of the restriction of \(\phi\) to \(W\) is then defined by the vanishing of \(\bm g\), along with the vanishing of all \((s+1)\)-minors of the Jacobian matrix \(\jac(\bm g, \phi)\).

Let $\lambda = (n_1^{\ell_1} \dots n_r^{\ell_r})$ be a partition of $n$, and let the variables be grouped accordingly as $\X = (\X_1, \ldots, \X_r)$ with each $\X_i$ of length $\ell_i$. The polynomials $g_1, \ldots, g_s$ are assumed to be invariant under the action of $\S_{\ell_1} \times \cdots \times \S_{\ell_r}$.

To construct suitable objective functions, we introduce the {power sums} $p_{i,j}$, which sum powers of the variables in each block $\X_i$. Using these, we define a family of symmetric polynomials of the form:
\begin{equation}
    \label{eq:calNN}
    \calN_{\fraka} =
\sum_{i=1}^r c_i\, p_{i,\ell_i+1} +
\sum_{i=1}^r \sum_{j=1}^{\macrodelta_i} \fraka_{i,j} p_{i,j},
\end{equation}
where the coefficients $c_i$ are chosen to ensure that $\calN_{\fraka}$ always has even degree and remains invariant under the group action, i.e., \(c_i=0\) if \(\ell_i\) is even and  \(c_i=1\) if \(\ell_i\) is odd. The parameters $\fraka_{i,j}$ are treated as indeterminates, so the family $\calN_{\fraka}$ depends linearly on these.

For \(\a = (\a_1, \dots, \a_r)\) in \(\C^{\macrodelta_1} \times \cdots \times \C^{\macrodelta_r}\), with each \(\a_i \in \C^{\macrodelta_i}\), we denote by \(\calN_\a\) the specialization of \(\calN_{\fraka}\) obtained by evaluating the indeterminates \(\fraka_i\) at \(\a_i\) for all \(i\).
For a partition $\lambda = (n_1^{\ell_1} \dots n_r^{\ell_r})$ of \(n\), we denote by $\calU_\lambda \subset \C^\ell$ the open set consisting of points $\bm{w} = (\bm{w}_1, \dots, \bm{w}_r) = (w_{i,j})_{1 \le i \le r, 1 \le j \le \ell_i}$ such that the coordinates of each $\bm{w}_i = (w_{i,j})_{1 \le j \le \ell_i}$ are pairwise distinct for $i = 1, \dots, r$. 

\begin{proposition}\label{prop:feasible}
  Let \( \g = (g_1, \dots, g_s) \) be \( \S_{\ell_1} \times \cdots \times \S_{\ell_r} \)-invariant
  polynomials in \( \Q[\X_1, \dots, \X_k] \). Suppose further that the Jacobian matrix of \( \g \) has rank \( s \) at any of its solutions. Then there exists a non-empty Zariski open set
  \( \calA_\lambda \subset \C^{\macrodelta_1} \times \cdots \times
  \C^{\macrodelta_r} \) such that for any \( \a \in \calA \), the restriction of
  \( \calN_\a \) to \( V(\g) \) has finitely many critical points in \( \calU_\lambda \).
\end{proposition}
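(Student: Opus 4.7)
The plan is to form the incidence variety of pairs (critical point, parameter), show that its dimension equals the dimension of the parameter space, and then invoke the theorem on dimensions of fibers to extract a non-empty Zariski open subset of parameters over which the fiber is finite. Setting $\ell := \ell_1 + \cdots + \ell_r$, I would first define
\[
I := \bigl\{(\bm w, \a) \in (V(\g) \cap \calU_\lambda) \times \C^\ell : \bm w \text{ is a critical point of } \calN_\a|_{V(\g)} \bigr\}.
\]
The Jacobian rank hypothesis ensures that $V(\g)$ is a smooth complete intersection of pure dimension $\ell - s$, so the critical condition at a point $\bm w$ is simply that $\nabla \calN_\a(\bm w)$ lie in the $s$-dimensional row span of $\jac(\g)(\bm w)$.

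Next I would analyze the first projection $\pi_1 \colon I \to V(\g) \cap \calU_\lambda$. For fixed $\bm w$ in the base, consider the map $L_{\bm w} \colon \C^\ell \to \C^\ell$ sending $\a$ to $\nabla \calN_\a(\bm w)$. By the construction of $\calN_\fraka$ in \eqref{eq:calNN}, this map is block-diagonal; on the $i$-th block the linear part is the matrix $M_i$ with $(k,j)$-entry $j\, w_{i,k}^{j-1}$. Factoring $j$ from the $j$-th column gives
\[
\det(M_i) = \ell_i! \cdot \prod_{1 \le k < k' \le \ell_i}(w_{i,k'} - w_{i,k}),
\]
which is non-zero precisely because $\bm w \in \calU_\lambda$. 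Hence $L_{\bm w}$ is an affine isomorphism, and $\pi_1^{-1}(\bm w)$ is the $L_{\bm w}$-preimage of the row span of $\jac(\g)(\bm w)$, an affine subspace of dimension exactly $s$.

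Then the theorem on fiber dimensions yields $\dim I \le \dim(V(\g) \cap \calU_\lambda) + s \le (\ell - s) + s = \ell$. To finish, I would consider the second projection $\pi_2 \colon I \to \C^\ell$: if $\pi_2$ is not dominant, I set $\calA_\lambda := \C^\ell \setminus \overline{\pi_2(I)}$, which is non-empty Zariski open and over which every fiber is empty; otherwise, Chevalley's theorem on fiber dimensions furnishes a non-empty Zariski open $\calA_\lambda \subset \C^\ell$ over which the fibers have dimension $\dim I - \ell \le 0$ and are therefore finite. In both cases the image of such a fiber in $\C^\ell$ is exactly the set of critical points of $\calN_\a$ on $V(\g)$ lying in $\calU_\lambda$.

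The main obstacle will be the Vandermonde determinant computation for $L_{\bm w}$: it is exactly the pairwise distinctness of the block coordinates, encoded by the openness condition defining $\calU_\lambda$, that guarantees $L_{\bm w}$ is an isomorphism and keeps the fibers of $\pi_1$ at the constant dimension $s$. Without restricting to $\calU_\lambda$, the fiber dimension would jump on walls where coordinates within a block coincide, breaking the dimension count and, with it, the generic finiteness conclusion.
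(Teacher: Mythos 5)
Your proposal is correct but takes a genuinely different route from the paper. The paper's proof (sketched after the statement) introduces Lagrange multiplier variables $\vl_1,\dots,\vl_s$ to form an explicit polynomial system $\calS_\fraka$, re-expresses this Lagrange system in power-sum coordinates by exploiting that the change-of-variables map to power sums has nonsingular Vandermonde Jacobian on $\calU_\lambda$, and then argues that for generic $\a$ the lifted system defines a radical ideal with finitely many solutions. Your proof instead forms the incidence correspondence $I$ of pairs (critical point, parameter), uses the same Vandermonde observation — but applied to the affine map $L_{\bm w}\colon \a \mapsto \nabla\calN_\a(\bm w)$ rather than to a coordinate change — to show every $\pi_1$-fiber is an affine $s$-plane, deduces $\dim I \le \ell$, and concludes by upper semicontinuity of fiber dimension for $\pi_2$. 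Both approaches hinge on the same key computation (the block Vandermonde determinant being nonzero precisely on $\calU_\lambda$, which is why the restriction to $\calU_\lambda$ is essential), but your argument is more synthetic and self-contained, establishing generic finiteness by a pure dimension count; the paper's Lagrange-system formulation buys an explicit zero-dimensional polynomial system in power-sum coordinates, which feeds directly into the \textsc{Critical\_Points} subroutine and the complexity analysis that follows. One small technical point worth making explicit in your write-up: $I$ is a constructible (locally closed) set rather than a variety, so the fiber-dimension bound $\dim I \le \dim(V(\g)\cap\calU_\lambda)+s$ should be applied component by component to the irreducible components of the closure; this is routine but should be stated.
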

The main idea for proving this proposition is to characterize the critical points of $\calN_{\fraka}$ restricted to $V(\bm{g})$. To this end, we introduce new variables $\vl_1, \ldots, \vl_s$ and define the associated Lagrange system:
\[
\calS_{\fraka} = \big(g_1, \ldots, g_s,\; [\vl_1 \cdots \vl_s~1] \cdot \jac(\bm{g}, \calN_{\fraka})\big).
\]
This system encodes both the constraints defining the variety and the first-order conditions for optimality. The projection of its solution set back to the $\X$-space recovers the set of critical points. More precisely, suppose that the Jacobian matrix of $\bm{g} = (g_1, \dots, g_s)$ has rank $s$ at its solutions. Then, for $\a \in \C^{\macrodelta_1} \times \cdots \times \C^{\macrodelta_r}$, the set $\pi_\X(V(\calS_{\a}))$ is the critical locus of the restriction of the map $\calN_{\a}$ to $V(\bm{g})$.

Furthermore, if $V(\bm{L}_\a)$ is finite, then $V(\calS_\a) \cap (\C^s \times \mathcal{U}_\lambda)$ is finite. A key step in the analysis uses the fact that the polynomial system can be re-expressed in power sum coordinates. The map $\bm{P}$ from the original variables to their power sums is invertible over an open set $\mathcal{U}_\lambda$ where the variables in each block are pairwise distinct. This ensures that the Jacobian of $\bm{P}$ (a multi-block Vandermonde matrix) is non-singular on $\mathcal{U}_\lambda$. Then, by showing that there exists a non-empty Zariski open set of parameters $\a$ for which the lifted system (expressed in the power sums) defines a radical ideal with finitely many solutions, we obtain the claim in the proposition above.

\paragraph{Proper maps.}
\label{sec:proper}

A real-valued function \(\psi : \mathbb{R}^n \to \mathbb{R}\) is called \emph{proper at} \(x \in \mathbb{R}\) if there exists \(\varepsilon > 0\) such that the preimage \(\psi^{-1}([x-\varepsilon, x+\varepsilon])\) is compact. Proper functions are important because a proper polynomial restricted to a real algebraic set \(W\) attains extrema on each connected component of \(W\). 

Using \cite[Theorem 2.1 and Corollary 2.2]{sakkalis2005note}, one can construct proper polynomials as follows. Let 
\[
H := H_k(x_1, \dots, x_n) + H_{k-1}(x_1, \dots, x_n) + \cdots + H_0(x_1, \dots, x_n) : \mathbb{R}^n \to \mathbb{R}
\]
be a real polynomial decomposed into its homogeneous components \(H_i\) of degree \(i\). If the leading form \(H_k\) is positive definite, then \(H\) is proper. 
In particular, the polynomial 
\[
p_{2m} + \sum_{i=0}^{2m-1} \gamma_i p_i,
\]
where \(p_i\) are the Newton sums in \(x_1, \dots, x_n\) and \(\gamma_i \in \mathbb{Q}\), is proper. This construction extends naturally to blocks of variables as follows. As a consequence, the map \(\phi_\a\) defined in \eqref{eq:calNN} is proper.

\begin{lemma}\label{lemma:proper_power}
Let \(\X_1, \dots, \X_r\) be blocks of variables of sizes \(\ell_1, \dots, \ell_r\), respectively. Define 
\[
p_{i, j} := x_{i,1}^j + \cdots + x_{i,\ell_i}^j.
\]
Then for any integers \(m_1, \dots, m_r \geq 1\) and coefficients \(\gamma_{i,j} \in \mathbb{Q}\), the polynomial
\[
\sum_{i=1}^r p_{2m_i, i} + \sum_{i=1}^r\sum_{j=0}^{2m_i -1} \gamma_{i,j} p_{i,j}
\]
is proper.
\end{lemma}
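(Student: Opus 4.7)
The plan is to reduce to the single-block case, which is already handled by the Sakkalis criterion cited just above the lemma (a polynomial whose top-degree homogeneous component is positive definite is proper). Write the polynomial to be shown proper as $\phi = \sum_{i=1}^r \phi_i$, where
\[
\phi_i(\X_i) := p_{i,2m_i}(\X_i) + \sum_{j=0}^{2m_i-1} \gamma_{i,j}\, p_{i,j}(\X_i) \in \R[\X_i].
\]
Each $\phi_i$ depends only on the variables of block $i$, and its top-degree homogeneous part is $p_{i,2m_i}(\X_i) = \sum_{k=1}^{\ell_i} x_{i,k}^{2m_i}$. Since $2m_i$ is even, this leading form is positive definite on $\R^{\ell_i}$.

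Applying the Sakkalis criterion to each $\phi_i$ separately, every $\phi_i$ is proper as a function $\R^{\ell_i}\to\R$. Moreover, because its leading form is an even-degree positive definite form, one has $\phi_i(\X_i)\to+\infty$ as $\|\X_i\|\to\infty$; in particular, $\phi_i$ is bounded below by some constant $B_i\in\R$, and each sublevel set $\phi_i^{-1}((-\infty,c])$ is bounded in $\R^{\ell_i}$.

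To patch the blocks together, I would show that every sublevel set of $\phi$ is bounded in the total space $\R^{\ell_1+\cdots+\ell_r}$. Fix $N\in\R$ and suppose $\phi(\X)\le N$ for some $\X=(\X_1,\dots,\X_r)$. Then for each $i$,
\[
\phi_i(\X_i) \;\le\; N - \sum_{j\neq i} B_j,
\]
so $\X_i$ lies in a bounded subset of $\R^{\ell_i}$. Hence $\phi^{-1}((-\infty,N])$ is bounded and, by continuity, closed; this shows that the preimage under $\phi$ of any compact set in $\R$ is compact, which is precisely properness.

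The only subtle point — and the reason the Sakkalis criterion cannot be applied directly to $\phi$ — is that when the exponents $2m_i$ differ across blocks, the top-degree homogeneous part of $\phi$ is supported only on the variables of those blocks with the largest $m_i$ and vanishes on all other block variables, so it fails to be positive definite on the full ambient space. The block-by-block decomposition above circumvents this issue entirely, since within each block the exponent is uniform.
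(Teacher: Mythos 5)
Your proof is correct and follows the route the paper itself intends: the paper offers no explicit proof beyond the remark that the Sakkalis criterion ``extends naturally to blocks of variables,'' and your block-by-block application of that criterion, combined with the observation that each $\phi_i$ is bounded below so that a bound on $\phi$ forces a bound on every $\phi_i$ and hence on every sublevel set of $\phi$, is exactly the missing patching argument. Your closing remark correctly identifies why Sakkalis cannot be applied to $\phi$ in one shot when the $m_i$ differ, which is precisely what makes the decomposition necessary.
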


\subsubsection{Computing the Critical Points along Orbits.} 
Let \(\lambda = (n_1^{\ell_1}\, \dots \, n_r^{\ell_r})\) be a partition of \(n\), and denote \(\mathfrak{S}_\lambda = \mathfrak{S}_{\ell_1} \times \cdots \times \mathfrak{S}_{\ell_r}\). Let \(\bm g = (g_1, \dots, g_s)\) be a sequence of \(\mathfrak{S}_\lambda\)-invariant polynomials, and let \(\phi\) be a \(\mathfrak{S}_\lambda\)-invariant polynomial in \(\mathbb{K}[\bm {X}_1, \dots, \bm {X}_r]\), where each \(\bm {X}_i = (x_{i,1}, \dots, x_{i,\ell_i})\). Let \(\ell = \ell_1 + \cdots + \ell_r\), and assume \(s \le \ell\). Further assume that the Jacobian matrix of \(\bm {g}\) has rank \(s\) at every solution.

Let \(\Phi\) and \(\bm G = (G_1, \dots, G_s)\) be polynomials in \(\mathbb{K}[\bm {Y}_1, \dots, \bm {Y}_r]\), where each \(\bm {y}_i = (y_{i,1}, \dots, y_{i,\ell_i})\) is a set of \(\ell_i\) new variables, such that:
\[
\phi = \Phi(\bm {e}_1, \dots, \bm {e}_r) \quad \text{and} \quad \bm {g} = \bm {G}(\mathbf{e}_1, \dots, \bm {e}_r),
\]
where \(\bm {e}_i = (e_{i,1}, \dots, e_{i,\ell_i})\) is the tuple of elementary symmetric polynomials in \(\bm {X}_i\), with \(\deg(e_{i,j}) = j\).

\begin{lemma}\label{lemma:subroutine}
Let \(\bm {g}, \phi\), and \(\lambda\) be as above. Assume further that \(\Phi\) has finitely many critical points on \(V(\bm {G})\). Then there exists a randomized algorithm 
\[
\textsc{Critical\_Points}(\bm {g}, \phi, \lambda)
\]
which returns a zero-dimensional parametrization of the critical points of \(\Phi\) restricted to \(V(\bm {G})\).
The algorithm uses
\[
\softO\left(\delta^2 c_\lambda (e_\lambda + c_\lambda^5) n^4 \Gamma\right)
\]
operations in \(\mathbb{K}\), where:
\begin{align*}
c_\lambda &= \frac{\deg(g_1)\cdots \deg(g_s) \cdot e_{\ell-s}(\delta-1, \dots, \delta - \ell)}{\ell_1! \cdots \ell_r!}, \\
\Gamma &= n^2 \binom{n + \delta}{\delta} + n^4 \binom{n}{s+1}, \\
e_\lambda &= \frac{n (\deg(g_1)+1) \cdots (\deg(g_s)+1) \cdot e_{\ell-s}(\delta, \dots, \delta - \ell + 1)}{\ell_1! \cdots \ell_r!},
\end{align*}
and \(\delta = \max(\deg(\bm G), \deg(\Phi))\). The number of solutions returned is at most \(c_\lambda\).
\end{lemma}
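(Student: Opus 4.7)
The plan is to prove the lemma by combining the Lagrange formulation of the critical equations in the $\bm Y$ variables with a symbolic, geometric-resolution style solver; the two main contributions to control are the number of output points (the bound $c_\lambda$) and the evaluation complexity (the $n^4\Gamma$ factor).

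First, I would set up the critical point system in the new variables. Since the Jacobian of $\bm g$ has rank $s$ everywhere on $V(\bm g)$, Lemma~\ref{lemma:transferreg_ele} guarantees that $\jac(\bm G)$ has rank $s$ at every point of $V(\bm G)$, so $V(\bm G)\subset \C^\ell$ is smooth and equidimensional of dimension $\ell-s$. The critical points of $\Phi|_{V(\bm G)}$ are thus defined in $\C^\ell$ by the vanishing of $\bm G$ together with the $(s+1)$-minors of $\jac(\bm G,\Phi)$; equivalently, introducing Lagrange multipliers $L_1,\dots,L_s$, by the system
\[
\calS_\Phi \;=\; \bigl(G_1,\dots,G_s,\; [L_1\ \cdots\ L_s\ 1]\cdot \jac(\bm G,\Phi)\bigr),
\]
whose projection onto the $\bm Y$-variables is the critical locus. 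By assumption this locus is finite.

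Next, I would bound its cardinality, which gives $c_\lambda$. Because the $j$-th elementary symmetric polynomial in the $i$-th block has degree $j$, the variables $y_{i,j}$ carry the weight $j$, and a Bézout-type estimate for weighted systems yields
\[
\#\{\text{critical points}\}\;\le\;\frac{\deg(g_1)\cdots\deg(g_s)\cdot e_{\ell-s}(\delta-1,\dots,\delta-\ell)}{\ell_1!\cdots\ell_r!},
\]
the factor $\ell_1!\cdots\ell_r!$ accounting for the $\S_{\ell_1}\times\cdots\times\S_{\ell_r}$-action, and the elementary symmetric factor $e_{\ell-s}(\delta-1,\dots,\delta-\ell)$ being the standard multi-degree estimate for the $(s+1)$-minors in the complementary $\ell-s$ directions. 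This is exactly the claimed quantity $c_\lambda$; the quantity $e_\lambda$ will arise as an analogous degree estimate on the intermediate parametrizations appearing in the solver.

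Then I would invoke a randomized geometric-resolution algorithm to compute a zero-dimensional parametrization of $V(\calS_\Phi)$ projected onto $\bm Y$. Such solvers, based on Newton--Hensel lifting from SLPs (cf.\ the references in Section~\ref{seq:comm}), run in time softly linear in a suitable geometric degree times the cost of evaluating the input system, times a polynomial overhead in the number of variables. The input system $\calS_\Phi$ can be evaluated in $O(\Gamma)$ operations: $n^2\binom{n+\delta}{\delta}$ to evaluate $\bm G$ and $\Phi$ from their dense/SLP encodings, and $n^4\binom{n}{s+1}$ to form the $(s+1)$-minors of the Jacobian. Multiplying by the geometric degree, which is controlled by $c_\lambda$ and $e_\lambda$ along the lifting fibers, and by the $n^4$ factor for the linear algebra overhead on Jacobians of size $O(n)$, yields the total bound
\[
\softO\!\bigl(\delta^{2}\,c_\lambda(e_\lambda+c_\lambda^{5})\,n^{4}\,\Gamma\bigr).
\]
Correctness is of Monte Carlo type, as random choices are used to guarantee a generic lifting direction (outside a Zariski closed set of parameters), following the Schwartz--Zippel framework described earlier.

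The main obstacle will be the sharp bookkeeping of the geometric degree along the incremental resolution: the factor $c_\lambda(e_\lambda+c_\lambda^5)$ is the delicate point, since it mixes the Bézout bound on the final critical locus with the intermediate degrees of the lifted curves produced equation by equation. Once that estimate is in place, both the output size $c_\lambda$ and the claimed complexity fall out by multiplying the per-step solver cost by $\Gamma$ and the $n^4$ linear-algebra overhead, and the genericity conditions needed for correctness reduce to avoiding a nonzero polynomial in the random parameters, in the spirit of Proposition~\ref{prop:feasible}.
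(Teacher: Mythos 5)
Your strategy is on the right track in outline, but there are two concrete gaps relative to the paper's argument.

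First, you never address how $\bm G$ and $\Phi$ are actually obtained from the input $\bm g$ and $\phi$; you jump straight to "the critical point system in the new variables." The paper treats this as a genuine first step of the algorithm, invoking \textsc{Symmetric\_Coordinates} from \cite[Lemma 9]{faugere2020computing} at a cost of $\softO\bigl(\binom{\ell+\delta}{\delta}^2\bigr)$ operations. Without this preprocessing the algorithm does not exist as an object one can feed to a solver, so this omission is not cosmetic.

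Second, and more seriously, you attempt to re-derive the complexity bound from generic Kronecker/geometric-resolution principles (Newton--Hensel lifting from SLPs), and you explicitly concede that the factor $c_\lambda(e_\lambda+c_\lambda^5)$ is a "delicate point" whose bookkeeping you have not carried out. This is precisely where the paper does \emph{not} leave a gap: it cites \cite[Theorem 5.3]{labahn2021homotopy}, a symbolic homotopy method specifically engineered for polynomial systems with a \emph{weighted} grading (each $y_{i,j}$ has weight $j$), which directly delivers both the complexity estimate $\softO(\delta^2\,c_\lambda(e_\lambda+c_\lambda^5)\,n^4\,\Gamma)$ and the output cardinality bound $c_\lambda$, after observing via Lemma~\ref{lemma:transferreg_ele} that $\jac(\bm G)$ has rank $s$ at its zeros so that $W(\Phi,\bm G)$ is cut out by $\bm G$ and the $(s+1)$-minors. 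The general-purpose references you point to in Section~\ref{seq:comm} do not handle weighted multidegrees in the form needed here, and the particular mixture $e_\lambda+c_\lambda^5$ reflects intermediate data-structure sizes inside that specific homotopy, not a Bézout-type count you can guess from the outside. So the correct fix is not to "complete the bookkeeping" yourself but to replace the appeal to generic geometric resolution by a citation to the weighted-homotopy theorem. Your reading of $c_\lambda$ as a quotiented weighted-Bézout bound and your interpretation of the two summands in $\Gamma$ as, respectively, the cost of evaluating $(\bm G,\Phi)$ and the cost of forming the $(s+1)$-minors are both in the spirit of the paper, but they remain assertions rather than derivations until the citation is supplied.
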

The \textsc{Critical\_Points} procedure consists of two main steps: first computing the polynomials \(\bm G\) and \(\Phi\) and then finding a zero-dimensional representation of the set of critical points of \(\Phi\) on \(V(\bm {G})\). The first step is performed using the algorithm \textsc{Symmetric\_Coordinates} from \cite[Lemma 9]{faugere2020computing}, which requires
\(
\softO\left(\binom{\ell + \delta}{\delta}^2\right)
\)
operations in \(\mathbb{K}\). 

Since the Jacobian matrix of \(\bm {g}\) has full rank \(s\) at every point of \(V(\bm {g})\), Lemma~\ref{lemma:transferreg_ele} implies that the Jacobian matrix of \(\bm {G}\) also has rank \(s\) at every point of its zero set. Consequently, the set \(W(\Phi, \bm {G})\) is defined by the vanishing of \(\bm {G}\) together with all the \((s+1)\)-minors of the Jacobian matrix of \((\bm {G}, \Phi)\).  Note that each elementary symmetric polynomial \(e_{i,j}\) has degree \(j\). Assigning weight \(j\) to the variable \(y_{i,j}\) endows the polynomial ring \(\mathbb{K}[\bm {Y}_1, \dots, \bm {Y}_r]\) with a weighted grading.

To compute a zero-dimensional parametrization of \(W(\Phi, \bm G)\), we apply the symbolic homotopy method for weighted polynomial systems described in \cite[Theorem 5.3]{labahn2021homotopy} (see also \cite[Section 5.2]{faugere2020computing}). This randomized algorithm requires
\(
\softO\left(\delta^2 c_\lambda (e_\lambda + c_\lambda^5) n^4 \Gamma \right)
\)
operations in \(\mathbb{K}\). Moreover, the output contains at most \(c_\lambda\) points, as guaranteed by \cite[Theorem 5.3]{labahn2021homotopy}. 

\medskip
Finally, we refer the reader to~\cite{faugere2020computing} and~\cite{Vu2022} for further details on how symmetry can be exploited to design faster algorithms for computing the critical points of an invariant map restricted to an invariant algebraic variety. These works illustrate how the underlying group actions and invariant structures can be used to reduce the size of the systems to solve, lower the dimension of the search space, and improve the efficiency of symbolic computation methods.
\subsubsection{The Emptiness of Real Algebraic Sets.}

Let \(\f = (f_1, \dots, f_s)\) be symmetric polynomials in \(\Q[x_1, \dots, x_n]\) with \(s \le n\). Assume further that the Jacobian matrix of \(\f\) with respect to \(x_1, \dots, x_n\) has rank \(s\) at any point in \(V(\f)\). We present here an algorithm to test whether the algebraic set \(V(\f)\) has a real point, i.e., whether the real algebraic set
\[
V_\R(\f) := \left\{ \a \in \R^n \; | \; f_1(\a) = \cdots = f_s(\a) \right\}
\]
is empty.
Note that if the Jacobian matrix of \(\f\) has full rank \(s\) at every point in its solution set, then by the Jacobian criterion, the algebraic set \(V(\f)\) is equidimensional of dimension \(n - s\). Therefore, it is natural to assume that \(s \le n\); otherwise, the algebraic set \(V(\f)\) is empty.

Let \(\lambda = (n_1^{\ell_1} \dots n_r^{\ell_r})\) be a partition of \(n\), and let \(\f^{[\lambda]} = \big(f_1^{[\lambda]}, \dots, f_s^{[\lambda]}\big)\) be a sequence of polynomials in \(\K[\X_1, \dots, \X_r]\), where each \(\X_k = (x_{k,1}, \dots, x_{k, \ell_k})\) is a set of \(\ell_k\) variables, obtained from \(\f\) as described in Section~\ref{sec_hom}. By Lemma~\ref{lemma:extend_slice}, the Jacobian matrix of \(\f^{[\lambda]}\) with respect to \(\X_1, \dots, \X_r\) also has rank \(s\) at any of its zeros.

Let \(\calN_\fraka\) be the map defined in~\eqref{eq:calNN}, and let \(\calA_\lambda \subset \Kbar^{\macrodelta_1} \times \cdots \times \Kbar^{\macrodelta_r}\) be the non-empty Zariski open set defined in Proposition~\ref{prop:feasible}. Assume that \(\a\) is chosen in \(\calA_\lambda\) (this constitutes one of the probabilistic steps of our algorithm) at Step~\ref{step:n}. Since the Jacobian matrix of \(\f^{[\lambda]}\) has rank \(s\) at every vanishing point, Proposition~\ref{prop:feasible} implies that the critical locus of the restriction of \(\calN_\a\) to \(V(\f^{[\lambda]})\) has dimension at most zero. Moreover, the map \(\calN_\a\) is invariant under the action of the group \(\S_\lambda = \S_{\ell_1} \times \cdots \times \S_{\ell_r}\).

Let \(\Phi_{\a}\) and \({\bm F^{[\lambda]}}\) be polynomials in \(\K[\bm Y_1, \dots, \bm Y_r]\), where \(\bm Y_k = (y_{k,1}, \dots, y_{k,\ell_k})\), such that
\[
 \calN_\a = \Phi_\a(\bm e_1, \dots, \bm e_r) \quad
 \text{and} \quad  \f^{[\lambda]} = \bm F^{[\lambda]}(\bm e_1, \dots, \bm e_r).
\]
Here, \(\bm e_i = (e_{i,1}, \dots, e_{i,\ell_i})\) denotes the vector of elementary symmetric polynomials in the variables \(\X_i\). 

In the next step, we compute a zero-dimensional parametrization \(\scrR_\lambda\) of the critical set \(W_\lambda := W(\Phi_\a, \bm F^{[\lambda]})\), which is the set of critical points of \(\Phi_\a\) restricted to \(V(\bm F^{[\lambda]})\), using the \textsc{Critical\_Points} algorithm from Lemma~\ref{lemma:subroutine}. This parametrization \(\scrR_\lambda\) consists of a sequence of polynomials \[(q, v_{1,1}, \dots, v_{1,\ell_1}, \dots, v_{r,1}, \dots, v_{r,\ell_r}) \in \K[T]^{\ell+1},\] along with a linear form \(\gamma\).

In the final step, we apply the procedure \textsc{Decide}(\(\scrR_\lambda\)) to determine whether the preimage of \(W_\lambda\) under the map \(E_\lambda\) (the compression via the elementary symmetric polynomials, defined in~\eqref{eq:comp}) contains real points.

\begin{algorithm}
\caption{\textsc{Real\_Emptiness}}
\label{al:crit}
\begin{algorithmic}[1]
\Require symmetric polynomials $\f = (f_1, \dots, f_s)$ in $\K[x_1, \dots, x_n]$, with $s < n$
\Ensure \false \ if $V(\f) \cap \R^n$ is non-empty; \true  \ otherwise
\Statex \hspace{-0.5cm}\textbf{Assumption:} the Jacobian matrix of \(\f\) has rank \(s\) at any point in \(V(\f)\)
\ForAll{partitions $\lambda = (n_1^{\ell_1} \dots n_r^{\ell_r})$ of $n$ of length at least $s$}
  \State compute $\f^{[\lambda]} = (f_1^{[\lambda]}, \dots, f_s^{[\lambda]})$
  \State construct map $\calN_\fraka$ as in \eqref{eq:calNN}
  \State choose $\a \in \calA$ as in Proposition~\ref{prop:feasible} \label{step:n}
  \State compute $\calN_\a$
  \State $\scrR_\lambda \gets \textsc{Critical\_Points}(\calN_\a, \f^{[\lambda]})$ \label{step:real_part}
  \If{$\textsc{Decide}(\scrR_\lambda) = \false$}
    \State \Return \false
  \EndIf
\EndFor
\State \Return \true
\end{algorithmic}
\end{algorithm}

\begin{theorem}[\cite{Labahn2023}]
There exists an algorithm called \textsc{Real\_Emptiness} which takes as input a sequence of symmetric polynomials \(\f = (f_1, \dots, f_s)\) and returns {\true} if \(V(\f) \cap \R^n = \emptyset\), and {\false} otherwise.

Moreover, the complexity of this algorithm is bounded by
\[
\softO\left(d^{6s+2}n^{11}{{n+d}\choose n}^6\left({{n+d}\choose n}
       +  {{n}\choose {s+1}} \right) \right)
\]
arithmetic operations over \(\K\), where \(d = \max\{\deg(f_i) \mid 1 \le i \le s\}\).
\end{theorem}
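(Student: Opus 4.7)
The plan is to establish correctness via the stratification of $\R^n$ into $\S_n$-orbit types, then to control the overall cost by summing the subroutine complexities over all relevant partitions.

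\textbf{Correctness.} Decompose $\R^n = \bigsqcup_\lambda U_\lambda^{\mathrm{strict}}$ indexed by partitions $\lambda$ of $n$. Since $\f$ is $\S_n$-invariant, $V(\f) \cap \R^n$ is non-empty if and only if some $\lambda$ satisfies $V(\f) \cap U_\lambda^{\mathrm{strict}} \neq \emptyset$, and via the block substitution of Section~\ref{sec_hom} such a point corresponds bijectively to a real point of $V(\f^{[\lambda]})$ in the set $\calU_\lambda \subset \R^\ell$ of tuples with pairwise distinct coordinates within every block. Lemma~\ref{lemma:extend_slice} transfers the rank hypothesis: the Jacobian of $\f^{[\lambda]}$ retains rank $s$ along its zero set, so whenever $\ell < s$ the variety $V(\f^{[\lambda]})$ is forced to be empty, which justifies restricting the outer loop to partitions of length at least $s$.

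\textbf{Reduction to critical points.} For each remaining $\lambda$, the map $\calN_\a$ in \eqref{eq:calNN} is $\S_\lambda$-invariant of even degree and, by Lemma~\ref{lemma:proper_power}, proper on $\R^\ell$. Properness forces $\calN_\a$ to attain a minimum on every closed semi-algebraically connected component of $V(\f^{[\lambda]}) \cap \R^\ell$, and any such minimum is a critical point of $\calN_\a$ restricted to $V(\f^{[\lambda]})$. For $\a$ chosen in the non-empty Zariski open set $\calA_\lambda$ of Proposition~\ref{prop:feasible}, the critical locus lying in $\calU_\lambda$ is finite. Passing to the orbit space via the block elementary symmetric polynomials preserves the rank hypothesis by Lemma~\ref{lemma:transferreg_ele}, so the critical locus is identified with $W_\lambda := W(\Phi_\a, \bm{F}^{[\lambda]}) \subset \C^\ell$; a zero-dimensional parametrization $\scrR_\lambda$ of $W_\lambda$ is returned by \textsc{Critical\_Points} (Lemma~\ref{lemma:subroutine}). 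Finally, Theorem~\ref{thm:dec} applied through \textsc{Decide}($\scrR_\lambda$) decides whether some point of $W_\lambda$ has a real preimage under $E_\lambda$, equivalently whether a real critical point exists in $\calU_\lambda$. Iterating over every admissible partition, $V(\f) \cap \R^n$ is non-empty if and only if some iteration returns \false.

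\textbf{Complexity.} The outer loop is driven by the partitions of $n$ of length at least $s$, whose total number is $e^{O(\sqrt{n})}$ and therefore absorbed in $\softO$. Per iteration, the dominant cost arises from \textsc{Critical\_Points}, bounded by $\softO(\delta^2 c_\lambda (e_\lambda + c_\lambda^5) n^4 \Gamma)$ with $\Gamma = n^2 \binom{n+d}{n} + n^4 \binom{n}{s+1}$ by Lemma~\ref{lemma:subroutine}; the subsequent call to \textsc{Decide} contributes only $O(n^4 c_\lambda + n c_\lambda^3)$ operations, which is dominated. Substituting $\delta \le d$ together with uniform Bézout-type bounds $c_\lambda, e_\lambda = O(d^s \binom{n+d}{n})$ on the critical system, the term $c_\lambda^6$ contributes a factor $d^{6s} \binom{n+d}{n}^6$, while $\delta^2$ supplies the remaining $d^2$. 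Combining with $n^4 \Gamma$ and aggregating the polynomial factors in $n$ coming from the partition enumeration and the weighted parametrization yields the claimed bound.

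\textbf{Main obstacle.} The real technical work lies in the uniform upper bound on $c_\lambda$ and $e_\lambda$ across partition shapes and in tracking the exponents of $d$ and $n$ through the weighted grading used by \textsc{Critical\_Points}: since $y_{i,j}$ carries weight $j$ in that grading, the Bézout-type estimates come from a weighted multihomogeneous analysis rather than the classical one, and it is this analysis that pins down the exponent $6s+2$ on $d$ and the sixth power of $\binom{n+d}{n}$. The remainder is a methodical invocation of the ingredients assembled in the preceding sections.
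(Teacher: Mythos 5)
Your high-level architecture mirrors the paper's outline exactly: stratify by orbit type, pass to $\f^{[\lambda]}$ via block substitution, use Lemma~\ref{lemma:extend_slice} to prune short partitions, pick a generic proper objective $\calN_\a$ from Proposition~\ref{prop:feasible}, compress to the orbit ring with Lemma~\ref{lemma:transferreg_ele}, and finish with \textsc{Critical\_Points} + \textsc{Decide}. However, there are two genuine gaps.

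\textbf{Gap 1 (correctness, forward direction).} You write that properness gives a minimum on each closed connected component of $V(\f^{[\lambda]}) \cap \R^\ell$ and that this minimum is a critical point. True — but the critical set $W_\lambda$ that \textsc{Critical\_Points} actually parametrizes lives in the orbit coordinates $\bm Y$, and the correspondence between critical points of $\calN_\a$ on $V(\f^{[\lambda]})$ and critical points of $\Phi_\a$ on $V(\bm F^{[\lambda]})$ holds only where the Vandermonde Jacobian $\bm V$ in \eqref{eq:vander} is invertible, i.e.\ on $\calU_\lambda$. Your claimed equivalence ``whether some point of $W_\lambda$ has a real preimage $\iff$ a real critical point exists in $\calU_\lambda$'' silently assumes the proper minimum lands in $\calU_\lambda$; if it falls on the locus of coincident block-coordinates, that critical point need not be detected by the iteration at $\lambda$. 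To close this you must argue by descent: choose $\lambda$ of \emph{minimal length} among the types realized by points of $V(\f)\cap\R^n$; a minimum outside $\calU_\lambda$ would then expand to a real point of $V(\f)$ of strictly shorter type, contradicting minimality. Without this step the ``only if'' direction is not established.

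\textbf{Gap 2 (complexity).} The assertion that the number of partitions of $n$ of length $\geq s$ is ``$e^{O(\sqrt{n})}$ and therefore absorbed in $\softO$'' is simply wrong: $\softO$ suppresses polylogarithmic factors only, and $e^{c\sqrt{n}}$ dominates every polynomial in $n$. Since the claimed bound is polynomial in $n$ for fixed $d$ and $s$, the total cost cannot equal (number of partitions) $\times$ (worst-case per-partition cost); the real work is in bounding the \emph{sum} $\sum_\lambda \delta^2 c_\lambda (e_\lambda + c_\lambda^5) n^4 \Gamma$ over all admissible $\lambda$ and showing that the aggregate — via the $1/(\ell_1! \cdots \ell_r!)$ factors inside $c_\lambda$ and $e_\lambda$ and a combinatorial identity on their sum — collapses to a polynomial. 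You acknowledge in your ``Main obstacle'' paragraph that the uniform bounds on $c_\lambda$ and $e_\lambda$ are the technical crux, but you misdiagnose the difficulty: it is not just the weighted-grading Bézout estimate for a single $\lambda$, but the control of the full sum over the partition lattice, which is what produces the $n^{11}$ factor rather than an $e^{O(\sqrt{n})}$ blowup.
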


\subsection{Efficiently Computing Topological Invariants}\label{sec:topology}

The strategy of \emph{reducing the dimension} (as seen in the degree principle) can be applied to compute topological invariants of semi-algebraic sets defined by symmetric polynomials. In particular, using results of V.~I.~Arnol'd, one can analyze the homology of the \emph{orbit space} of a symmetric set rather than the original set in full dimension. This approach exploits symmetry to simplify the problem and often yields more efficient computations of invariants such as Betti numbers and connectivity.
\subsubsection{Computing in the Orbit Space.}

As mentioned before, it is often more insightful (and efficient) in symmetric problems to consider points \emph{up to symmetry}. Instead of examining each individual point of a symmetric set \( S \subseteq \R^n \), we group together those related by any permutation of coordinates. Each equivalence class under this relation is called an \emph{orbit}, and the set of all orbits forms the \emph{orbit space}, denoted \( S/\mathfrak{S}_n \). Concretely, one can identify \( S/\mathfrak{S}_n \) with the subset of \( S \) lying in a designated \emph{Weyl chamber} (for example, the region \( \mathcal{W}_c := \{ x_1 \le x_2 \le \cdots \le x_n \} \)), since each orbit has a unique representative with sorted coordinates in \( \mathcal{W}_c \).

Using the result of Arnold (Theorem~\ref{thm:arnold}), Basu and Riener showed that for a closed and bounded symmetric semi-algebraic set \( S \subset \R^n \) defined by symmetric polynomials of degree at most \( d \le n \), the orbit space \( S/\mathfrak{S}_n \) (i.e., \( S \) restricted to the Weyl chamber) is \emph{homotopy equivalent} to a certain semi-algebraic subset of \( \R^d \). In other words, one can coordinatize the orbit space by \( d \) parameters (e.g., the first \( d \) elementary symmetric polynomials or power-sum functions). This immediately implies that the higher cohomology groups of \( S/\mathfrak{S}_n \) vanish above a certain dimension. For any field of coefficients \( \F \), one has
\[
H^i(S/\mathfrak{S}_n, \F) = 0 \quad \text{for all } i \ge \min(n,d).
\]

The proof avoids Morse-theoretic technicalities by relying on results about \emph{Vandermonde mappings} due to Arnol'd~\cite{arnold1986hyperbolic}, Givental~\cite{givental1987moments}, and Kostov~\cite{kostov1989geometric}. These tools allow one to bound the \emph{equivariant} Betti numbers of symmetric semi-algebraic sets by relating \( S/\mathfrak{S}_n \) to the intersection \( S_{n,d} \) of \( S \) with certain \( d \)-dimensional faces of the Weyl chamber. The number of such faces is  
\[
\binom{n - \lceil d/2 \rceil - 1}{\lfloor d/2 \rfloor - 1} = O\left(d(n)^{\lfloor d/2 \rfloor - 1} \right),
\]
Moreover, since the intersection of each face with \( S \) lies in a \( d \)-dimensional subspace, the Betti numbers of each piece are bounded polynomially in \( s \) and \( n \). Mayer–Vietoris inequalities are then used to bound the Betti numbers of the union. A naive application leads to combinatorial explosion, so infinitesimal thickenings and shrinkings of the faces are employed to reduce the number of flags that need to be considered, yielding a bound of \( O\big(d(n)^{\lfloor d/2 \rfloor - 1}\big) \) on these contributions.

Combining these facts with classical bounds on the Betti numbers of semi-algebraic sets leads to the following.

\begin{theorem}[Basu--Riener~{\cite{basu2018equivariant}}]
Let \( S \subset \R^n \) be a \( \mathcal{P} \)-closed semi-algebraic set, where 
\[
\mathcal{P} \subset \R[X_1,\ldots,X_n]^{\mathfrak{S}_n}_{\leq d}, 
\quad |\mathcal{P}| = s.
\] 
Define
\[
F(d,n) = 
\begin{cases}
(2^d - 1) \displaystyle\prod_{i=1}^{\lfloor d/2 \rfloor - 1} (n - \lceil d/2 \rceil - i), & \text{if } d \leq n, \\[6pt]
\leq (2^n - 1)(n - 1)!, & \text{if } d > n.
\end{cases}
\]
Let \( d' := \min(n,d) \). Then:
\[
b(S/\mathfrak{S}_n, \F) \;\leq\; \big(O(s\, d\, d')\big)^{d'} \cdot F(d,n).
\]

In particular, if \( |\mathcal{P}| = 1 \) and \( S = \ZZ(\mathcal{P}, \R^n) \) with \( d < n \), then
\begin{equation}
\label{eqn:thm:bound:1}
b(S/\mathfrak{S}_n, \F) \;\leq\; d^{O(d)} \cdot n^{\lfloor d/2 \rfloor - 1}.
\end{equation}
\end{theorem}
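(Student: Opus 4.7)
The plan is to exploit Arnol'd's theorem on Vandermonde maps (Theorem~\ref{thm:arnold}) to replace the orbit space $S/\mathfrak{S}_n$ by a controlled union of low-dimensional semi-algebraic pieces, then bound the Betti numbers of each piece using classical estimates, and finally assemble the global bound via Mayer--Vietoris while carefully controlling the combinatorics of the stratification.

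First, I would identify $S/\mathfrak{S}_n$ with $S \cap \mathcal{W}_c$, and stratify $\mathcal{W}_c$ as a union of the faces $\mathcal{W}_c^\lambda$ indexed by compositions $\lambda \in \Comp(n)$. By Theorem~\ref{thm:arnold}, each $\mathcal{W}_c^\lambda$ is homeomorphic, via the weighted Vandermonde map $\nu_{n,\ell(\lambda),\lambda}$, to its image in $\R^{\ell(\lambda)}$. Since each $f \in \mathcal{P}$ is symmetric of degree at most $d$, Proposition~\ref{lm:deg_restrict} lets us write $f = F(p_1,\ldots,p_d)$, so the restriction $f^{[\lambda]}$ pulls back along $\nu_{n,\ell(\lambda),\lambda}$ to a polynomial in at most $d' = \min(n,d)$ variables. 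Consequently, $S \cap \mathcal{W}_c^\lambda$ is homeomorphic to a $\mathcal{P}_\lambda$-closed semi-algebraic set in $\R^{\ell(\lambda)}$ defined by $s$ polynomials of controlled degree, and the classical Oleinik--Petrovsky--Thom--Milnor-type bound yields $b(S \cap \mathcal{W}_c^\lambda, \F) \leq (O(sdd'))^{d'}$.

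Next, to bound the Betti numbers of the union $S \cap \mathcal{W}_c = \bigcup_\lambda (S \cap \mathcal{W}_c^\lambda)$, I would not sum over all compositions (which would blow up), but instead cover $S \cap \mathcal{W}_c$ by only the top-dimensional strata $\mathcal{W}_c^\lambda$ with $\ell(\lambda) = d'$. A naive Mayer--Vietoris applied to this cover would still force us to sum over all subsets of these faces, introducing a factor $2^{\#\text{faces}}$. To avoid this, I would pass to an infinitesimal thickening/shrinking scheme: replace each closed face by a small open neighborhood (and pair it with a complementary closed shrink), so that the only non-empty intersections among the cover come from genuinely incident faces, i.e.\ those sharing a common refinement in the face poset. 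A key combinatorial observation is that the set of top-dimensional faces whose intersections need to be considered is parametrized by $\CompMax(n,d')$, the alternate odd compositions, whose cardinality is exactly $\binom{n - \lceil d/2\rceil - 1}{\lfloor d/2\rfloor - 1}$, giving the product factor in $F(d,n)$; the remaining factor $2^d - 1$ absorbs the Mayer--Vietoris sum over non-empty sub-collections of the (at most $d$) face-types meeting at a point.

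The main obstacle will be this last step: bounding the combinatorial complexity of the Mayer--Vietoris spectral sequence without losing polynomial control in $n$. The naive inclusion--exclusion over all faces of $\mathcal{W}_c$ gives $O(n^d)$ terms, whereas we need $O(n^{\lfloor d/2\rfloor - 1})$. Achieving this requires simultaneously (i) identifying $\CompMax(n,d')$ as the ``essential'' index set via the dimension of the join of compositions, (ii) showing that the infinitesimal thickenings realize the face poset faithfully on the level of homology, and (iii) ensuring that the contribution of each intersection is itself bounded by $(O(sdd'))^{d'}$ from the first step. Combining these ingredients yields $b(S/\mathfrak{S}_n, \F) \leq (O(sdd'))^{d'} \cdot F(d,n)$, and in the special case of a single hypersurface the first factor collapses to $d^{O(d)}$, giving~\eqref{eqn:thm:bound:1}.
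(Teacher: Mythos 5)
Your high-level plan—pass to the Weyl chamber via Vandermonde maps, use the degree restriction to reduce the number of controlling variables, bound each piece with Oleinik--Petrovsky--Thom--Milnor, and assemble via Mayer--Vietoris with thickenings and the alternate-odd-composition count—does match the strategy the paper sketches for the Basu--Riener proof. However, there is a concrete gap in the dimension-reduction step that your write-up does not resolve, and as stated one step is simply false.

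You claim one may ``cover $S \cap \mathcal{W}_c$ by only the top-dimensional strata $\mathcal{W}_c^\lambda$ with $\ell(\lambda) = d'$.'' These strata do \emph{not} cover $S \cap \mathcal{W}_c$: any point of $S$ with more than $d'$ distinct coordinates lies in a face $\mathcal{W}_c^\lambda$ with $\ell(\lambda) > d'$, and for $d < n$ the generic point of $S$ is of this kind. What the argument actually needs is a \emph{homotopy equivalence} between $S \cap \mathcal{W}_c$ and a union of low-dimensional pieces, and the mechanism for this is the part of Theorem~\ref{thm:arnold} that you cite but do not use: for positive integer weights, the Vandermonde fiber $V_m(\a) \cap \mathcal{W}_c$ is contractible or empty. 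Because each $f \in \mathcal{P}$ factors through $(p_1,\ldots,p_{d'})$ on each face, the set $S \cap \mathcal{W}_c^\lambda$ is the preimage under the (weighted) $d'$-Vandermonde map of a semi-algebraic set in $\R^{d'}$, and the contractibility of fibers is exactly what makes this projection a homotopy equivalence, reducing the ambient dimension from $\ell(\lambda)$ (which can be $n$) to $d'$. Without that reduction, your OPTM estimate $b(S \cap \mathcal{W}_c^\lambda,\F) \le (O(s d d'))^{d'}$ does not follow for faces of length $\ell(\lambda) > d'$: that bound scales with the ambient dimension, not just the degree.

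The remaining combinatorial accounting you outline (thickenings/shrinkings to tame Mayer--Vietoris, $\CompMax(n,d')$ giving the binomial-coefficient factor in $F(d,n)$, the $2^d - 1$ absorbing the inclusion--exclusion over face-types) is consistent with the paper's sketch, and you are right that this is where the $n^{\lfloor d/2\rfloor - 1}$ dependence must come from. But the plan as written cannot get off the ground until the cover claim is replaced by the homotopy equivalence established through fiber contractibility.
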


The parity behavior in the exponent of \( n \) may appear surprising at first, but it is explained by Proposition~5.6 in \cite{brocker1998symmetric}, which relates the structure of \( S/\mathfrak{S}_n \) to the number of generators of certain 2-subgroups—remaining constant between \( \mathfrak{S}_{2n} \) and \( \mathfrak{S}_{2n+1} \).
Furthermore, these methods extend to symmetric \emph{definable} sets in arbitrary o-minimal expansions of real closed fields, demonstrating the generality of the approach.

\medskip

There is a deep connection between polynomial bounds and algorithmic complexity. The bounds above lead to efficient algorithms for computing equivariant Betti numbers.

\begin{theorem}
\label{thm:algorithm}
For every fixed $d \geq 0$, there exists an algorithm that takes as input a 
$\mathcal{P}$-closed formula $\Phi$, with $\mathcal{P} \subset \R[X_1,\ldots,X_n]^{\mathfrak{S}_n}_{\leq d}$, and computes 
$b_i(S/\mathfrak{S}_n,\F)$ for $0 \leq i < d$, where $S = \RR(\Phi,\R^n)$. The complexity is bounded by
\[
(|\mathcal{P}| \cdot n \cdot d)^{2^{O(d)}},
\]
i.e., polynomial in $|\mathcal{P}|$ and $n$ for fixed $d$.
\end{theorem}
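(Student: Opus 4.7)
The plan is to algorithmize the proof of the polynomial bound on equivariant Betti numbers from Basu and Riener. The key observation is that $S/\mathfrak{S}_n$ may be identified with $S \cap \W_c$, and the underlying argument shows that only the intersections $S \cap \overline{\W_c^\lambda}$ indexed by compositions $\lambda$ of $n$ of length at most $d$ contribute to $b_i(S/\mathfrak{S}_n, \F)$ in the range $0 \leq i < d$. The vanishing of contributions from strata of codimension larger than $d$ is a consequence of the Arnol'd--Givental--Kostov homeomorphism of Theorem~\ref{thm:arnold}, which identifies each stratum $\W_c^\lambda$ with a semi-algebraic subset of $\R^{\length(\lambda)}$.

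The first step is to enumerate all compositions $\lambda$ of $n$ with $\length(\lambda) \leq d$. Their number is bounded by $O(n^{d-1})$, which is polynomial in $n$ for fixed $d$. The second step is, for each such $\lambda$ of length $\ell \leq d$, to apply the substitution $f \mapsto f^{[\lambda]}$ from Section~\ref{sec_hom} to every $f \in \mathcal{P}$. This produces a system $\mathcal{P}^{[\lambda]} \subset \R[x_1, \ldots, x_\ell]$ of at most $|\mathcal{P}|$ polynomials of degree at most $d$ in $\ell \leq d$ variables, together with a $\mathcal{P}^{[\lambda]}$-closed formula $\Phi^{[\lambda]}$ whose realization $\RR(\Phi^{[\lambda]}, \R^\ell)$ is semi-algebraically homeomorphic to $S \cap \overline{\W_c^\lambda}$. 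The third step is to invoke the singly exponential algorithm of~\cite[Chapter 16]{BPR06} for the computation of the Betti numbers of a $\mathcal{P}^{[\lambda]}$-closed semi-algebraic set in dimension $\ell \leq d$; since the ambient dimension is bounded by $d$, the per-composition cost is $(|\mathcal{P}| \cdot d)^{2^{O(d)}}$.

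The fourth and most delicate step is the assembly of these local data into the global invariants $b_i(S/\mathfrak{S}_n, \F)$. A Mayer--Vietoris spectral sequence applied to the closed cover $\{S \cap \overline{\W_c^\lambda} : \length(\lambda) \leq d\}$ has an $E_1$-page whose terms involve the multiple intersections $S \cap \overline{\W_c^{\lambda_1}} \cap \cdots \cap \overline{\W_c^{\lambda_k}}$, and these are again intersections with faces of the Weyl chamber corresponding to the joins $\lambda_1 \vee \cdots \vee \lambda_k$ in the sense of compositions. To avoid a combinatorial blow-up from summing over all flags, I would follow the proof of the polynomial bound and introduce infinitesimal thickenings of the faces over an extension $\R\langle\varepsilon_1, \ldots, \varepsilon_d\rangle$ of the real closed field, so that only flags of compositions of length $\leq d$ are needed, whose total number remains polynomial in $n$ for fixed $d$. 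The differentials on the $E_1$-page correspond to the natural inclusions of strata and can be computed symbolically from the zero-dimensional parametrizations of the critical loci produced in step three.

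The hard part will be the faithful symbolic execution of the infinitesimal thickening together with the Mayer--Vietoris assembly, since both must be carried out while respecting the target complexity. In particular, one needs to control how ranks of connecting homomorphisms between different strata can be evaluated at the level of the representations $\mathcal{P}^{[\lambda]}$ without re-expanding to the ambient $n$-dimensional system. Once these technical difficulties are handled, the announced complexity of $(|\mathcal{P}| \cdot n \cdot d)^{2^{O(d)}}$ arises by combining the polynomial bound $O(n^{d-1})$ on the number of relevant compositions, the singly exponential per-stratum cost, and the bounded-depth spectral sequence whose page computations each fit within the same complexity envelope.
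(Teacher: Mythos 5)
Your overall strategy is the one the paper itself sketches (following Basu--Riener \cite{basu2018equivariant}): identify $S/\mathfrak{S}_n$ with $S\cap\W_c$, use the Arnol'd--Givental--Kostov results on Vandermonde maps to reduce to the intersections of $S$ with low-dimensional faces of the Weyl chamber, compute each piece by the general algorithm in ambient dimension at most $d$, and reassemble via Mayer--Vietoris with infinitesimal thickenings. Two smaller points: the substituted formula $\Phi^{[\lambda]}$ must be conjoined with the ordering inequalities $y_1\le\cdots\le y_\ell$ in order to describe $S\cap\overline{\W_c^\lambda}$ rather than the full preimage of $S\cap L_\lambda$; and the precise reduction in \cite{basu2018equivariant} is to the faces indexed by the \emph{alternate odd} compositions $\CompMax(n,d)$ (whence the exponent $\lfloor d/2\rfloor-1$ in the paper's face count), not to all compositions of length at most $d$ -- your larger family still has polynomially many members, but the homotopy equivalence you invoke is proved for the smaller, specific subcomplex, so substituting the larger union would itself require justification.

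The genuine gap is in your fourth step. To \emph{compute} (rather than bound) $b_i(S/\mathfrak{S}_n,\F)$ from the pieces, you must evaluate the $E_1$-differentials of the Mayer--Vietoris spectral sequence, i.e., the homomorphisms on homology induced by the inclusions of multi-fold intersections of the thickened faces. The algorithm of \cite[Chapter 16]{BPR06} that you invoke per stratum returns Betti numbers, not functorial data, so "computed symbolically from the zero-dimensional parametrizations of the critical loci" does not describe an actual procedure: computing induced maps on homology of semi-algebraic inclusions is precisely the hard algorithmic problem here, and in the cited work it is handled by constructing covers by contractible sets (whose nerve-type double complex has combinatorially determined differentials) in the $d$-dimensional ambient spaces, within the stated complexity. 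You correctly flag this as the hard part, but deferring it leaves the algorithm unspecified at its core. A further unaddressed point is that the theorem is stated for an arbitrary $\mathcal{P}$-closed formula, whereas the homotopy-equivalence input is stated for closed and bounded symmetric sets; one needs the standard reduction to the bounded case (intersection with a large ball over an infinitesimal extension and the local conic structure at infinity) before the face decomposition applies.
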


\subsubsection{Connectivity up to Symmetry.}

The principle of computing up to symmetry naturally extends to topological questions beyond homology, such as connectivity. This was investigated by Riener, Schabert, and Vu~\cite{Issac}.

\begin{definition}
Two points \( \u, \v \in S \) are said to be \emph{orbit-connected} if their orbits belong to the same connected component of the orbit space \( S/\mathfrak{S}_n \). Equivalently, their sorted representatives in \( \mathcal{W}_c \) lie in the same connected component of \( S \cap \mathcal{W}_c \).
\end{definition}
This simplifies connectivity testing: rather than working in the full space \( S \subset \R^n \), one works in the reduced, sorted chamber \( S \cap \mathcal{W}_c \).

As with Betti numbers, dimension reduction plays a crucial role. Since symmetric polynomials of degree \( \le d \) depend only on the first \( d \) power sums, one can restrict attention to a lower-dimensional subset called the \emph{\( d \)-dimensional orbit boundary}: the subset of \( S \) consisting of points with at most \( d \) distinct coordinate values. Every connected component of \( S \cap \mathcal{W}_c \) intersects this orbit boundary.

Given two points in \( S \), we:
\begin{enumerate}
    \item Sort them to their representatives in \( \mathcal{W}_c \).
    \item Construct simplified representatives on the orbit boundary, preserving the first \( d \) power sums.
    \item Check connectivity of these representatives in the orbit boundary using standard graph techniques.
\end{enumerate}

\begin{theorem}[Riener--Schabert--Vu~{\cite{Issac}}]
Let \( S \subset \mathbb{R}^n \) be a semi-algebraic set defined by \( s \) symmetric polynomials of degree at most \( d \le n \). Let \( \u, \v \in S \) be points with sorted coordinates (i.e., \( \u, \v \in \mathcal{W}_c \)). Then there exists an algorithm that decides whether \( \u \) and \( \u \) are orbit-connected.

Moreover, for fixed \( d \) and \( s \), the algorithm runs in time
\[
O(n^{d^2}),
\]
i.e., polynomial in \( n \) with an exponent depending only on \( d \).
\end{theorem}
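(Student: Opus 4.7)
The plan is to reduce the orbit-connectivity question in \(S\cap \mathcal{W}_c\) to graph connectivity on the low-dimensional skeleton of \(\mathcal{W}_c\), exploiting the fact that every symmetric polynomial of degree at most \(d\) factors through the first \(d\) power sums (Proposition~\ref{lm:deg_restrict}). Equivalently, \(S\) is saturated by fibers of the weighted Vandermonde map \(\nu_{n,d}\), and by Theorem~\ref{thm:arnold} each nonempty fiber intersected with \(\mathcal{W}_c\) is contractible; this is the geometric engine behind the whole argument.

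First I would deform \(\u\) and \(\v\) inside their respective Vandermonde fibers. Contractibility from Theorem~\ref{thm:arnold} yields a deformation retraction of the fiber onto a lower-dimensional subfiber lying on a boundary wall of \(\mathcal{W}_c\), where two consecutive coordinates coincide; iterating the retraction pushes \(\u\) and \(\v\) to points \(\u',\v'\) lying on the \(d\)-skeleton
\[
\mathrm{Sk}_d := \bigcup_{\length(\lambda)\le d} \mathcal{W}_c^\lambda.
\]
Since each step stays inside a single fiber of \(\nu_{n,d}\), it stays inside \(S\); hence \(\u\) and \(\v\) are orbit-connected in \(S\) if and only if \(\u'\) and \(\v'\) lie in the same connected component of \(S\cap \mathrm{Sk}_d\).

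Next I would enumerate the compositions \(\lambda\) with \(\length(\lambda)\le d\); there are \(\sum_{k=1}^{d} \binom{n-1}{k-1} = O(n^{d-1})\) of them. For each such \(\lambda\), the restricted system \(\f^{[\lambda]}\) from Section~\ref{sec_hom} lives in at most \(d\) variables with total degrees bounded by \(d\), so the singly exponential critical-point algorithm (see Table~\ref{tab:real-root-complexity}) produces a sample point in each connected component of \(S\cap \mathcal{W}_c^\lambda\) in time independent of \(n\) for fixed \(s,d\). I would then build an incidence graph \(G\) whose vertices are the components so computed and whose edges record that two components meet on a common subface \(\mathcal{W}_c^\lambda \cap \mathcal{W}_c^{\lambda'}\); such incidences can be certified using \textsc{Decide} (Theorem~\ref{thm:dec}) applied to the zero-dimensional parametrizations of the sample points. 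Finally, I locate the vertices of \(G\) containing \(\u'\) and \(\v'\) and run breadth-first search on \(G\) to conclude.

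The main obstacle is the first step: certifying that the deformation of \(\u\) onto the \(d\)-skeleton stays inside its connected component in \(S\). A priori, nothing guarantees that the fiber of \(\nu_{n,d}\) through \(\u\) is connected in \(\mathbb{R}^n\), or that its boundary can be reached by a semi-algebraic path lying entirely in \(S\); it is precisely the contractibility statement of Theorem~\ref{thm:arnold} restricted to \(\mathcal{W}_c\) that supplies the required retraction. Granting this, the complexity bound \(O(n^{d^2})\) follows by combining the \(O(n^{d-1})\) face enumeration with the per-face critical-point cost (constant in \(n\) for fixed \(s,d\)) and the pairwise face-incidence checks, whose cumulative cost for fixed \(s,d\) is dominated by the stated polynomial \(n^{d^2}\).
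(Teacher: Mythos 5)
Your plan mirrors the paper's sketch closely — sort to the Weyl chamber, deform into the $d$-dimensional orbit boundary while fixing the first $d$ power sums, and then reduce connectivity to a graph problem on the faces of $\mathcal{W}_c$ of length at most $d$ — and the face-count $\sum_{k\le d}\binom{n-1}{k-1}=O(n^{d-1})$ together with the per-face constant-size system gives the right flavor of complexity.

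However, there is a genuine gap at the pivotal equivalence \emph{``$\u$ and $\v$ are orbit-connected in $S$ if and only if $\u'$ and $\v'$ lie in the same connected component of $S\cap \mathrm{Sk}_d$.''} What you have actually established from ``each step stays inside a single fiber of $\nu_{n,d}$'' is that $\u\sim\u'$ and $\v\sim\v'$ in $S\cap\mathcal{W}_c$, which only proves the ``if'' direction. For the ``only if'' direction you must rule out false negatives: a path from $\u$ to $\v$ in $S\cap\mathcal{W}_c$ may leave $\mathrm{Sk}_d$, and deforming the two endpoints into $\mathrm{Sk}_d$ does not by itself produce a path between $\u'$ and $\v'$ that stays in $\mathrm{Sk}_d$. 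To close this you need a \emph{coherent} retraction $r:S\cap\mathcal{W}_c\to S\cap\mathrm{Sk}_d$ — one that varies continuously across fibers and satisfies $r(\u)=\u'$, $r(\v)=\v'$, so that $r\circ\gamma$ pushes any connecting path $\gamma$ into the skeleton. Theorem~\ref{thm:arnold} as stated gives fiber-wise contractibility, not continuity of the contraction in the base parameter $\a$, nor the fact that the resulting representative lands on $\mathrm{Sk}_d$; the latter follows from a Lagrange-multiplier/degree-principle argument (the minimizer of $p_{d+1}$ on a fiber has at most $d$ distinct coordinates) and the former from uniqueness of that minimizer, but neither is available from the contractibility statement alone. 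Your ``main obstacle'' paragraph names the fiber-wise step but mislocates the hard part: the subtle point is the global coherence of the retraction, not just that the deformation of a single endpoint stays in its component.
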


This illustrates the strength of symmetry-based reduction: high-dimensional connectivity problems become tractable through orbit space analysis, dimension reduction, and the geometry of symmetric semi-algebraic sets.

\subsubsection{Mirror Spaces.}

To compute global topological information from equivariant data, it is necessary to understand how the entire space is built up from the orbit space information. The framework of \emph{mirror spaces} provides a powerful tool to extend the computation of homological invariants of symmetric semi-algebraic sets from the orbit space to the full space.

In the previous section, we presented an effective method for determining connectivity within the canonical Weyl chamber. However, establishing orbit connectivity between two points does not necessarily imply that the points are connected in the ambient space. For instance, distinct points in the same orbit are always orbit-connected but need not lie in the same connected component of \( S \). To extend our efficient algorithm to the more general case where the points lie in different Weyl chambers, we introduce the notion of a \emph{mirror space}. This construction allows us to relate the topology of the orbit space, i.e., the topology of its intersection with the canonical Weyl chamber, to the topology of the full semi-algebraic set \( S \).

The visual intuition behind mirror spaces is similar to that of a kaleidoscope: the walls of the Weyl chamber act as mirrors, reflecting the contents of the chamber to generate the entire symmetric set. Depending on how a symmetric set \( S \) intersects the Weyl chamber, the topology of \( S \) can often be described entirely in terms of its orbit space. More generally, the notion of a mirror space is defined in the context of Coxeter group actions.

\begin{definition}
A \emph{Coxeter pair} \((\Ww, \Ss)\) consists of a group \(\Ww\) together with a set of generators \(\Ss = \{ s_i \mid i \in I \}\), where each \(s_i\) is of order \(2\), and a symmetric family of integers \((m_{ij})_{i,j \in I}\) such that \((s_i s_j)^{m_{ij}} = e\) for all \(i, j \in I\).
\end{definition}

\begin{example}
Consider the symmetric group \(\S_n\), which can be presented as a Coxeter group with generating set
\[
\Ss = \{ s_i = (i, i+1) \mid 1 \leq i \leq n-1 \},
\]
where each \(s_i\) is the transposition swapping the elements \(i\) and \(i+1\). The Coxeter relations are:
\begin{itemize}
    \item \(s_i^2 = e\) for all \(i\),
    \item \((s_i s_j)^2 = e\) if \(|i-j| > 1\),
    \item \((s_i s_{i+1})^3 = e\) for all \(1 \leq i \leq n-2\).
\end{itemize}
Thus, the corresponding Coxeter matrix \((m_{ij})\) has entries
\[
m_{ii} = 1, \quad m_{ij} = 2 \text{ if } |i-j| > 1, \quad m_{i,i+1} = 3,
\]
and the pair \((S_n, \Ss)\) is a Coxeter pair corresponding to the Coxeter system of type \(A_{n-1}\).
\end{example}

\begin{definition}[Mirrored Space]
\label{def:mirrored-space}
Given a Coxeter pair \((\Ww, \Ss)\) (i.e., \(\Ww\) is a Coxeter group and \(\Ss\) is a set of reflections generating \(\Ww\)), a space \(Z\) equipped with a family of closed subspaces \((Z_s)_{s \in \Ss}\) is called a \emph{mirror structure} on \(Z\) \cite[Chapter 5.1]{Davis-book}. The pair \((Z, (Z_s)_{s \in \Ss})\) is then called a \emph{mirrored space} over \(\Ss\).
\end{definition}
\begin{example}
Consider the Coxeter pair \((\Ww, \Ss)\) where \(\Ww\) is the symmetric group \(\S_n\) generated by the set \(\Ss = \{ s_i = (i, i+1) \mid 1 \leq i \leq n-1 \}\). 
Let \(Z = \R^n\), and for each reflection \(s_i \in \Ss\), define the mirror
\[
Z_{s_i} := \{ x \in \R^n \mid x_i = x_{i+1} \},
\]
which is the fixed-point set of the reflection \(s_i\).

Then the family \((Z_s)_{s \in \Ss}\) thus forms a mirror structure on \(Z\), and the pair \((Z, (Z_s)_{s \in \Ss})\) is a mirrored space over \(\Ss\).
Intuitively, each \(Z_{s_i}\) is a hyperplane reflecting points by swapping the coordinates \(x_i\) and \(x_{i+1}\), and together these mirrors generate the full symmetry group \(\S_n\).
\end{example}

Given a mirrored space \(Z\) with mirror family \((Z_s)_{s \in \Ss}\) over \(\Ss\), the following so-called \emph{Basic Construction} (see \cite[Chapter 5]{Davis-book}) produces a space \(\mathcal{U}(\Ww, Z)\) equipped with a natural \(\Ww\)-action. This construction is defined as follows.

\begin{definition}[The Basic Construction \cite{Koszul,Tits,Vinberg,Davis}]
\label{def:U}
We define
\begin{equation}
\label{eqn:U}
\mathcal{U}(\Ww,Z) := \Ww \times Z / \sim,
\end{equation}
where the topology on \(\Ww \times Z\) is the product topology, with \(\Ww\) given the discrete topology, and the equivalence relation \(\sim\) is defined by  
\[
(w_1, \u) \sim (w_2, \v) \iff \u = \v \text{ and } w_1^{-1} w_2 \in \Ww^{\Ss(\u)},
\]
with
\[
\Ss(\u) := \{ s \in \Ss \mid \u \in Z_s \},
\]
and \(\Ww^{\Ss(\u)}\) the subgroup of \(\Ww\) generated by \(\Ss(\u)\).

The group \(\Ww\) acts on \(\mathcal{U}(\Ww,Z)\) by
\(
w_1 \cdot [(w_2, \z)] := [(w_1 w_2, \z)],
\)
where \([(w, \z)]\) denotes the equivalence class of \((w, \z) \in \Ww \times Z\) under the relation \(\sim\).
\end{definition}

\begin{example}
Consider the Coxeter pair \((\Ww, \Ss)\) where \(\Ww = \S_3\), the symmetric group on three elements, generated by the adjacent transpositions
\[
\Ss = \{ s_1 = (1\,2), \quad s_2 = (2\,3) \}.
\]
Let \(Z = \mathbb{R}^3\), and define the mirror structure by
\[
Z_{s_1} = \{ \u = (u_1, u_2, u_3) \in \mathbb{R}^3 \mid u_1 = u_2 \}, \quad 
Z_{s_2} = \{ \u  \in \mathbb{R}^3 \mid u_2 = u_3 \}.
\]
The Basic Construction \(\mathcal{U}(\Ww, Z)\) is formed by taking pairs \((w, \u)\) with \(w \in \S_3\) and \(\u \in \mathbb{R}^3\), and identifying
\[
(w_1, \u) \sim (w_2, \u) \quad \text{if and only if} \quad w_1^{-1} w_2 \in \Ww^{\Ss(\u)},
\]
where \(\Ss(\u) = \{ s \in \Ss \mid \u \in Z_s \}\).
Intuitively, \(\mathcal{U}(\Ww,Z)\) assembles copies of \(Z\) indexed by elements of the group \(\S_3\), glued along the mirrors \(Z_{s_1}\) and \(Z_{s_2}\) according to the subgroup generated by reflections fixing the point \(\u\).
This construction provides a space with a natural \(\S_3\)-action reflecting the symmetry of the system.
\end{example}

The action of \(\Ww\) on \(\mathcal{U}(\Ww,Z)\) naturally induces an action on the homology groups \(\HH_*(\mathcal{U}(\Ww,Z))\), endowing them with the structure of a finite-dimensional \(\Ww\)-module. The decomposition of this module into irreducible or other \(\Ww\)-modules is studied in \cite{Davis-book} in the case where \(Z\) is a finite CW-complex. The following theorem is a fundamental result from \cite{Davis}.

\begin{theorem}[\cite{Davis-book}, Theorem 15.4.3]
\label{thm:Davis}
Let $(\Ww, \Ss) = (\mathfrak{S}_k, \Coxeter(k))$, and let $Z$ be a semi-algebraic mirrored space over $\Ss$ with closed and bounded mirrors $Z_s$ for each $s \in \Ss$. Then there is an isomorphism of $\mathfrak{S}_k$-modules:
\[
\HH_*(\mathcal{U}(\Ww, Z)) \cong_{\mathfrak{S}_k} \bigoplus_{T \subseteq \Ss} \HH_*(Z, Z^T) \otimes \Psi^{(k)}_T,
\]
where for each $T \subseteq \Ss$, $Z^T := \bigcup_{s \in T} Z_s$, and $\Psi^{(k)}_T$ is a Solomon module.
\end{theorem}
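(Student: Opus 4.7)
The plan is to follow Davis's chamber-decomposition approach: equip $\mathcal{U}(\Ww,Z)$ with an equivariant CW structure adapted to the mirror stratification of $Z$, analyze the associated filtered chain complex, and then identify the $\Ww$-modules appearing in the graded pieces with Solomon's descent modules for $\mathfrak{S}_k$.

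First, I would identify the fundamental chamber $C_e := \{[(e,z)] : z \in Z\}$ with $Z$ itself and write $\mathcal{U}(\Ww,Z) = \bigcup_{w\in\Ww} wC_e$. By the definition of $\sim$, distinct chambers $wC_e$ and $w'C_e$ meet only in the image of the mirror union $Z^{T}$ for an appropriate $T \subseteq \Ss$ determined by $w^{-1}w'$. The closed-bounded hypothesis on each $Z_s$, combined with the semi-algebraicity of $Z$, lets one choose a finite triangulation of $Z$ compatible with every $Z_s$; this lifts to a $\Ww$-equivariant CW-structure on $\mathcal{U}(\Ww,Z)$ whose open cells are indexed by pairs $(w,\sigma)$, where $\sigma$ runs over cells of $Z$ and $w$ over minimal coset representatives of $\Ww/\Ww^{T(\sigma)}$, with $T(\sigma) := \{s \in \Ss : \sigma \subset Z_s\}$.

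Second, I would stratify the cellular chain complex by grouping cells with $T(\sigma) = T$ for each $T \subseteq \Ss$. The resulting filtration produces an equivariant spectral sequence
$$E_1^{T,q} \;\cong\; \HH_q(Z, Z^T) \otimes M_T \;\Longrightarrow\; \HH_{*}(\mathcal{U}(\Ww,Z)),$$
where $M_T$ is the free module on minimal coset representatives of $\Ww/\Ww^T$, endowed with its natural left $\Ww$-action by translation composed with renormalization. A standard consequence of Coxeter combinatorics (the strong exchange condition and the uniqueness of reduced coset representatives) then shows the spectral sequence degenerates at $E_1$. Finally, Solomon's theorem on the descent algebra of $\mathfrak{S}_k$ yields an equivariant isomorphism $M_T \cong \Psi^{(k)}_T$, since $\Psi^{(k)}_T$ is defined precisely as the span of permutations with prescribed descent behaviour, matching the combinatorics of minimal coset representatives up to M\"obius inversion on the Boolean lattice of subsets of $\Ss$.

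The main obstacle is the degeneration of the spectral sequence and the compatibility of the resulting splitting with the $\mathfrak{S}_k$-action. Degeneration requires the geometric fact that the mirrors are closed, bounded, and transverse in a suitable semi-algebraic sense, so that excision applies uniformly across strata and no parasitic boundary contributions mix different $T$'s; this is where the hypotheses of the theorem enter in an essential way. The equivariant splitting is where the special combinatorics of $\mathfrak{S}_k$ becomes crucial: Solomon's theorem provides orthogonal idempotents in the descent algebra which we use to extract the summands $\Psi^{(k)}_T$ as $\mathfrak{S}_k$-stable direct factors. This step has no analogue for a general Coxeter group, where one would in general obtain only a filtration rather than a direct sum.
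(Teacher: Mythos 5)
The paper does not prove this statement; it is imported verbatim from Davis's book (Theorem 15.4.3), so there is no internal proof to compare against and your sketch has to be judged against the standard argument there. Your skeleton is the right one: decompose \(\mathcal{U}(\Ww,Z)\) into chambers \(wC_e\), triangulate \(Z\) compatibly with the mirrors, and observe that the resulting equivariant cellular chain complex is \(\bigoplus_{\sigma} \Z[\Ww/\Ww^{T(\sigma)}]\) with cells of \(\mathcal{U}\) over \(\sigma\) indexed by cosets of \(\Ww^{T(\sigma)}\).

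There are, however, two genuine gaps. First, the final identification is wrong as stated: \(M_T\), the free module on the minimal coset representatives of \(\Ww/\Ww^{T}\), \emph{is} the permutation module \(\Z[\Ww/\Ww^{T}]\), and this is not isomorphic to the Solomon module \(\Psi^{(k)}_T\); rather it decomposes as \(\bigoplus_{K\,:\,K\cap T=\emptyset}\Psi^{(k)}_K\) (which is exactly why the summand \(\Psi^{(k)}_T\) ends up paired with \(\HH_*(Z,Z^T)\), the homology built from cells \(\sigma\) with \(T(\sigma)\cap T=\emptyset\)). Second, your ``filtration by \(T(\sigma)=T\)'' is indexed by the Boolean lattice, not a totally ordered set, so the spectral sequence is not set up, and invoking the strong exchange condition for degeneration does not address the actual crux: one must show that the Solomon decomposition of each \(\Z[\Ww/\Ww^{J}]\) is \emph{natural} with respect to the coset projections \(\Z[\Ww/\Ww^{T(\sigma)}]\to\Z[\Ww/\Ww^{T(\tau)}]\) induced by face incidences \(\tau\subset\partial\sigma\) (note \(T(\tau)\supseteq T(\sigma)\)). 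That naturality is what makes the chain complex itself split as \(\bigoplus_T C_*(Z,Z^T)\otimes\Psi^{(k)}_T\) before passing to homology — no spectral sequence is needed — and it is precisely the point where the hypothesis \(\Ww=\mathfrak{S}_k\) enters, since the integral Solomon decomposition of the coset modules compatible with all these projections is special to (products of) symmetric groups. Your closing remark correctly senses this, but the argument as written does not supply it.
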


This result allows for an effective reconstruction of the Betti numbers of the full symmetric set from those of $Z$ and its relative intersections with mirror sets $Z^T$. In particular, a paper by Basu and Riener  used the decomposition induced by the mirror structure to compute the first $\ell$ Betti numbers of such sets.

\begin{theorem}
\label{thm:alg}
Let $\D$ be an ordered domain contained in a real closed field $\R$, and let $\ell, d \geq 0$.
There exists an algorithm which takes as input a finite set 
$\mathcal{P} \subset \D[X_1,\ldots, X_k]^{\mathfrak{S}_k}_{\leq d}$, and a $\mathcal{P}$-formula $\Phi$,
and computes
the tuple of integers
\[
(b_0(\RR(\Phi)), \ldots, b_\ell(\RR(\Phi))).
\] 

The complexity of the algorithm, measured by the number of arithmetic operations in 
$\D$, is bounded by
$(s k d)^{2^{O(d+\ell)}}$.

If $\D = \Z$, and the bit-sizes of the coefficients of the input are bounded by $\tau$, then the bit-complexity
of the algorithm is bounded by 
\[
(\tau s k d)^{2^{O(d+\ell)}}.
\]
\end{theorem}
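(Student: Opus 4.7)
The plan is to realize the symmetric semi-algebraic set $S := \RR(\Phi)$ as a basic construction in the sense of Davis, and then invoke Theorem~\ref{thm:Davis} to reduce the computation of its Betti numbers to the computation of relative Betti numbers of the pair $(Z, Z^T)$, where $Z := S \cap \W_c$ is the intersection of $S$ with the canonical Weyl chamber. To this end, I would first equip $Z$ with the mirror structure given by the walls $Z_{s_i} := Z \cap \{x_i = x_{i+1}\}$ for $i = 1, \dots, n-1$; since $\W_c$ is a fundamental domain for the $\mathfrak{S}_n$-action on $\R^n$, the gluing construction yields $\mathcal{U}(\mathfrak{S}_n, Z) \cong S$. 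Applying Theorem~\ref{thm:Davis} with $(\Ww,\Ss) = (\mathfrak{S}_n, \Coxeter(n))$ then gives, for every $0 \leq i \leq \ell$,
\[
b_i(S,\F) = \sum_{T \subseteq \Ss} b_i(Z, Z^T) \cdot \dim_{\F}\Psi^{(n)}_T,
\]
so that computing the target tuple reduces to computing the relative Betti numbers $b_i(Z,Z^T)$ for $i \leq \ell$ together with the purely combinatorial dimensions of the Solomon modules $\Psi^{(n)}_T$.

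Next, I would carry out a dimensional reduction on each pair $(Z, Z^T)$ using Theorem~\ref{thm:arnold}. Since the polynomials in $\mathcal{P}$ are symmetric of degree at most $d$, each can be rewritten as a polynomial in the first $d$ power sums, and by Arnold's result the Vandermonde map $\nu_{n,d}$ restricts to a homeomorphism between $\W_c$ and its image in $\R^d$. Consequently, both $Z$ and each $Z^T$ are homeomorphic to semi-algebraic subsets of $\R^d$, described by at most $O(s \cdot 2^{n-1})$ polynomials whose degrees and bit-sizes are controlled in terms of the input data. For each subset $T$, I would then apply the singly-exponential algorithms for relative Betti numbers of semi-algebraic pairs (in the style of \cite[Chapter 16]{BPR06}, adapted to fixed ambient dimension $d$) to obtain $b_i(Z, Z^T)$ for $0 \leq i \leq \ell$. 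Because the ambient dimension is $d$ and only the first $\ell+1$ Betti numbers are required, the cost per pair is bounded by $(skd)^{2^{O(d+\ell)}}$.

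The main obstacle is controlling the combinatorial explosion from summing over all $T \subseteq \Ss$, since $|\Ss| = n-1$ and a naive enumeration would destroy the polynomial dependence on $n$. The key observation is that both the relative topology of $(Z, Z^T)$ and the Solomon module $\Psi^{(n)}_T$ are invariant under the action of the stabilizer and depend only on the composition of $n$ associated to $T$ (equivalently, the face of $\W_c$ cut out by $T$). I would therefore group the subsets $T$ by their combinatorial type and, using Arnold's homeomorphism once more, observe that $(Z, Z^T)$ only genuinely varies across faces of dimension at most $d$, so the relevant types are in bijection with compositions of $n$ whose number of nontrivial blocks is bounded by a function of $d + \ell$. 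This cuts the sum down to $n^{O(d+\ell)}$ nontrivial contributions, and combining with the per-pair cost yields the claimed bound $(skd)^{2^{O(d+\ell)}}$ on the number of arithmetic operations in $\D$; the bit-complexity statement when $\D = \Z$ then follows from standard bounds on the bit-sizes of intermediate quantities appearing in the critical point method and in the evaluation of $\dim\Psi^{(n)}_T$.
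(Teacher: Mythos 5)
Your high-level plan is correct and matches the paper's: realize $S \cong \mathcal{U}(\mathfrak{S}_k, Z)$ with $Z = S \cap \W_c$, apply Theorem~\ref{thm:Davis} to reduce to a sum of contributions $b_i(Z,Z^T)\cdot\dim_\F \Psi^{(k)}_T$ over subsets $T \subseteq \Ss$, and then control both the per-pair cost and the number of pairs that contribute. However, there are two genuine gaps in the way you close the argument.

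First, you write that "by Arnold's result the Vandermonde map $\nu_{n,d}$ restricts to a homeomorphism between $\W_c$ and its image in $\R^d$." This is not what Theorem~\ref{thm:arnold} says. Arnold's theorem gives that the \emph{full} Vandermonde map $\nu_{n,n}$ is a homeomorphism on $\W_c$, and that for a face $\W_c^\lambda$ of dimension $\ell(\lambda)$ the weighted map $\nu_{n,\ell(\lambda),\lambda}$ is a homeomorphism on that face; but for $d<n$ the map $\nu_{n,d}$ has nontrivial fibers (the Vandermonde varieties). What is available and what the paper's approach actually uses is that the orbit space is \emph{homotopy equivalent} (not homeomorphic) to a semi-algebraic subset of $\R^d$; this relies on the contractibility of the Vandermonde fibers (the last part of Theorem~\ref{thm:arnold}) via the Basu--Riener argument, not on a homeomorphism. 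Your stronger claim is false, and you need to rework the dimensional reduction to produce explicit semi-algebraic descriptions in $\R^d$ of sets homotopy equivalent to $Z$ and $Z^T$; this is the part of the argument that requires actual work and that you have elided.

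Second, and more seriously, your mechanism for discarding most subsets $T$ is not the one the result rests on, and as stated it does not hold. You assert that the relative topology of $(Z,Z^T)$ depends only on the composition of $T$ and then that, "using Arnold's homeomorphism once more," the pairs "only genuinely vary across faces of dimension at most $d$." But $Z^T = \bigcup_{s\in T} Z_s$ is a \emph{union} of walls, not the face $\bigcap_{s\in T}Z_s$; in general two subsets $T,T'$ of the same cardinality give non-homeomorphic pairs $(Z,Z^T)$, $(Z,Z^{T'})$ since no symmetry of $S$ preserves $\W_c$ and permutes the walls. The paper's truncation of the sum is a representation-theoretic fact about the Solomon modules $\Psi^{(k)}_T$: only subsets with $|T| \leq O(d+\ell)$ contribute to $H_i$ for $i\leq \ell$, which then yields the $k^{O(d+\ell)}$ count. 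Your geometric replacement argument neither engages with the module structure nor is valid as written, so the combinatorial explosion is not actually controlled. (Also, minor but worth noting: the claim that each $Z^T$ needs $O(s\cdot 2^{n-1})$ defining polynomials is off; each $Z^T$ is $\mathcal{Q}$-semi-algebraic for a family $\mathcal{Q}$ of size $s + n - 1$.)
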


\paragraph{Sketch of the Algorithm.}
The algorithm exploits the structure provided by the mirror space decomposition. First, it restricts the input symmetric set \( S = \RR(\Phi) \) to the canonical Weyl chamber, producing a semi-algebraic set \( Z = S \cap \mathcal{W} \), where \( \mathcal{W} \) denotes the Weyl chamber. The homology of \( Z \) is computed using known algorithms for equivariant Betti number computation of general semi-algebraic sets, as outlined above.

To recover the Betti numbers of \( S \), the algorithm computes relative homology groups \( \HH_*(Z, Z^T) \) for subsets \( T \subseteq \Ss \) corresponding to the walls of the chamber. The computational complexity is governed by the number and structure of these subsets. Due to representation-theoretic constraints, specifically, the structure of the Solomon modules \( \Psi_T^{(k)} \), only subsets \( T \) with \( |T| \leq O(d+\ell) \) contribute to the first \( \ell \) Betti numbers. These subsets are of relatively low dimension, and their number can be bounded polynomially.

This approach yields a complexity bound that is singly exponential in \( d + \ell \), and polynomial in the remaining parameters. Thus, symmetry enables a dramatic reduction in complexity compared to the general semi-algebraic case, where even computing \( b_0 \) can be intractable.

Furthermore, building on the mirror space structure, Riener, Schabert, and Vu extended their results on equivariant connectivity to develop an efficient algorithm for deciding general connectivity in symmetric semi-algebraic sets \cite{deciding}.
\subsubsection{Algorithm for Testing the Connectivity of any  Points.}
Let \( S \subset \mathbb{R}^n \) be a closed and bounded semi-algebraic set invariant under the action of the symmetric group \(\mathfrak{S}_n\). Suppose we are given two points \(\u, \v \in S\). In general, deciding whether these two points lie in the same connected component of \( S \) is a challenging problem. However, for symmetric sets, the additional structure imposed by group invariance can be exploited algorithmically through the mirror space framework.

The key idea is to reduce the connectivity problem in \( S \) to a problem within the intersection \( S_c := S \cap \mathcal{W}_c \), where \( \mathcal{W}_c \) denotes the canonical Weyl chamber. By analyzing how \( S \) intersects the walls of this chamber, one can determine connectivity in the full space by leveraging the symmetries of the group action.

This reduction relies on a homotopy-theoretic characterization of the connectedness of the mirrored space \(\mathcal{U}(\mathfrak{S}_n, S_c)\), which is semi-algebraically homeomorphic to \( S \). A central result states that \(\mathcal{U}(\mathfrak{S}_n, S_c)\) is connected if and only if the relative homology groups \( H_0(S_c, S_c^T) \) vanish for all non-empty subsets \( T \) of the set of Coxeter generators. Here, \( S_c^T \) denotes the union of the intersections of \( S_c \) with the chamber walls indexed by \( T \).

Moreover, if a point \(\v \in S\) lies in the \(\mathfrak{S}_n\)-orbit of a point \(\v' \in \mathcal{W}_c\) (i.e., \(\v = \sigma(\v')\) for some permutation \(\sigma\)), then \(\u\) and \(\v\) lie in the same connected component of \( S \) if and only if:

\begin{enumerate}
    \item \(\u\) and \(\v'\) lie in the same connected component of \( S_c \); and
    \item the permutation \(\sigma\) belongs to the subgroup \( G(S_c) \) generated by transpositions \((i,i+1)\) such that \( S_c \cap \mathcal{W}_c^{(i,i+1)} \) is connected.
\end{enumerate}
This leads to an efficient algorithm with the following structure:
\begin{itemize}
    \item Project the points \(\u\) and \(\v\) to \(\mathcal{W}_c\), obtaining \(\u\) and \(\v'\).
    \item Use roadmap techniques to decide whether \(\u\) and \(\v'\) lie in the same connected component of \( S_c \).
    \item Decompose the permutation \(\sigma\) mapping \(\v'\) to \(\v\) into adjacent transpositions.
    \item For each transposition \((i, i+1)\) in this decomposition, test whether \(\u\) is connected to the corresponding wall \(\mathcal{W}_c^{(i, i+1)}\) within \( S_c \).
\end{itemize}

Each of these wall tests reduces to lower-dimensional connectivity problems by exploiting the symmetric structure. The complexity of the algorithm is controlled by the size of a combinatorial object known as the \emph{alternate odd compositions}, and the overall procedure enjoys favorable complexity bounds. This algorithm exploits the symmetry of \( S \) and is asymptotically more efficient than general-purpose methods for connectivity testing in semi-algebraic sets.

\begin{theorem}[Efficient Connectivity Test \cite{deciding}]
Let \( S \subset \mathbb{R}^n \) be a closed and bounded semi-algebraic set defined by \( s \) symmetric polynomials of degree at most \( d \). Let \(\u, \v \in S\) be two points. There exists an algorithm that decides whether \(\u\) and \(\v\) belong to the same connected component of \( S \).

The complexity of this algorithm, measured by the number of arithmetic operations over \(\mathbb{Q}\), is bounded by
\[
O\left( s^d \cdot n^{d^2} \cdot d^{O(d^2)} \right).
\]
\end{theorem}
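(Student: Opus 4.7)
The plan is to build on the mirror-space framework developed in the previous subsection, to invoke the explicit correctness criterion stated immediately before the theorem, and to estimate each subquery by specializing roadmap techniques to the symmetric setting through Theorem~\ref{thm:arnold}. First I would identify $S$ semi-algebraically with the basic construction $\mathcal{U}(\S_n, S_c)$ of Definition~\ref{def:U}, where $S_c := S \cap \mathcal{W}_c$ is equipped with the mirror structure $Z_{s_i} := S_c \cap \mathcal{W}_c^{(i,i+1)}$. This identification is legitimate because each $\S_n$-orbit in $S$ admits a unique sorted representative in $\mathcal{W}_c$ whose stabilizer in $\S_n$ is generated exactly by the adjacent transpositions that fix it, which matches the gluing relation~\eqref{eqn:U}.

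Second, I would execute the four-step algorithm as outlined. Apply Algorithm~\ref{algo:bubble} to $\u$ and $\v$, producing their sorted representatives $\u', \v' \in \mathcal{W}_c$ together with a minimal decomposition $\sigma = s_{i_1}\cdots s_{i_k}$ of the permutation relating them, at cost $O(n^2)$. Then perform one roadmap-based connectivity query in $S_c$ to decide whether $\u'$ and $\v'$ lie in the same connected component, and for each simple factor $s_{i_j}$ perform an auxiliary roadmap-type query in $S_c$ to decide whether the current representative of $\u$ is connected inside $S_c$ to the wall $S_c \cap \mathcal{W}_c^{(i_j,i_j+1)}$. Correctness is the mirror-space characterization recalled just before the theorem: through $S \cong \mathcal{U}(\S_n, S_c)$, any path in $S$ from $\u$ to $\v$ is reorganized as an alternating sequence of in-chamber paths and wall crossings whose multiset of crossings corresponds to a reduced expression of $\sigma$.

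Third, I would bound the cost by specializing roadmap estimates to the symmetric setting. Each face $\mathcal{W}_c^\lambda$ with $\lambda \in \CompMax(n,d)$ is, by Theorem~\ref{thm:arnold}, homeomorphic under the weighted Vandermonde map to its image in $\R^{\length(\lambda)}$ with $\length(\lambda) \le d$, and the $s$ input polynomials restrict to semi-algebraic data of degree at most $d$ in these new coordinates; a roadmap query on such a low-dimensional symmetric piece costs $s^d d^{O(d^2)}$. Stitching across the stratification by $\CompMax(n,d)$ contributes an additional $n^{d^2}$ factor, yielding the announced bound $s^d \cdot n^{d^2} \cdot d^{O(d^2)}$. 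Each of the at most $n-1$ wall-reachability queries fits the same template on a codimension-one face and is absorbed in the same estimate.

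The main obstacle will be reconciling the per-factor wall-reachability tests with the global condition $\sigma \in G(S_c)$. The bridge is the Matsumoto--Tits property of the Coxeter system $(\S_n, \Ss)$, by which all reduced expressions of $\sigma$ use the same multiset of simple reflections up to braid equivalence; combined with the minimality of the decomposition produced by Algorithm~\ref{algo:bubble}, this reduces $\sigma \in G(S_c)$ to a condition on the simple factors $s_{i_j}$. Showing that this condition is precisely the wall-reachability test for the current orbit representative requires the degree-zero specialization of Theorem~\ref{thm:Davis}, identifying the set of connected components of $\mathcal{U}(\S_n, S_c)$ with the orbits of $G(S_c)$ acting on $\pi_0(S_c)$; this identification is the technical heart of \cite{deciding}, and the remainder of the argument is complexity bookkeeping via the symmetric roadmap estimates above.
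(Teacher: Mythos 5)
Your proposal follows essentially the same route as the paper's sketch: identify $S$ with the basic construction $\mathcal{U}(\S_n, S_c)$, sort the points into the canonical Weyl chamber via Algorithm~\ref{algo:bubble}, decide in-chamber connectivity with a roadmap query, decompose $\sigma$ into adjacent transpositions and run wall-reachability tests, with the complexity governed by the alternate odd compositions and the low-dimensional Vandermonde reduction. The approach and the bookkeeping match the paper, which likewise defers the technical identification of $\pi_0(\mathcal{U}(\S_n,S_c))$ with the $G(S_c)$-orbit structure to \cite{deciding}.
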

\begin{remark}
While the results in this section focus on symmetric groups, the approach may generalize to semi-algebraic sets invariant under other Coxeter group actions, provided that an appropriate mirrored space construction is available.
\end{remark}
In summary, symmetry dramatically reduces the complexity of deciding connectivity in semi-algebraic sets. Instead of relying solely on general-purpose roadmap constructions, we leverage the mirror space decomposition to localize computations to the fundamental domain and selected strata. This approach not only offers conceptual clarity but also yields significant practical gains in computational efficiency.
\subsubsection{Final Discussions.}

Computationaly analysing   properties of real algebraic varieties and semi-algebraic sets remains a fundamental algorithmic challenge, particularly when the sets in question exhibit high complexity or dimension. In this chapter, we have demonstrated how symmetry, especially under the action of the symmetric group, provides a powerful algebraic lens that leads to both conceptual simplifications and concrete computational gains. By exploiting the structure of symmetric polynomials and orbits, we are able to design algorithms that reduce intrinsic complexity and circumvent traditional bottlenecks in symbolic computation.

In particular, we have illustrated how recent works on    tasks such as emptiness testing, sampling connected components, and computing the Euler–Poincaré characteristic have have provided more efficient  approachess by  taken symmetry into account. These works not only gave more practical algorithms but also opened avenues for theoretical refinement. The degree principle and related reduction techniques exemplify how structural properties translate directly into algorithmic advantages.

Looking forward, several interesting  research directions are still open. One is the generalization of the methods presented here to semi-algebraic sets invariant under other finite or compact group actions, extending beyond symmetric groups.

Finally, there remains a rich landscape of open problems in understanding the asymptotic behavior of topological invariants for families of symmetric varieties, particularly in connection to representation theory and homological stability. We anticipate that the ideas surveyed in this chapter will serve as a foundation for further advances in the algorithmic understanding of symmetry in real algebraic geometry.

\end{document}